%
%


%
%

\documentclass[10pt,reqno]{amsart}
\usepackage{hyperref}\hypersetup{colorlinks=true, citecolor=blue}
\usepackage{graphicx}
\usepackage{xfrac}
\usepackage{esint,amsfonts,amsmath,amssymb,epsfig,mathrsfs,bm}
\usepackage{ifthen,color,mathtools,yfonts}
\usepackage{epic,eepic,stmaryrd}
\usepackage{a4wide,accents}
%

%
%

\newtheorem{theorem}{Theorem}[section]
\newtheorem{lemma}[theorem]{Lemma}
\newtheorem{proposition}[theorem]{Proposition}
\newtheorem{corollary}[theorem]{Corollary}

\theoremstyle{definition}
\newtheorem{definition}[theorem]{Definition}

\theoremstyle{remark}
\newtheorem{remark}[theorem]{Remark}
\theoremstyle{remark}

\numberwithin{equation}{section}

%
%

\newcommand{\R}{\mathbb{R}}

\newcommand{\dist}{{\textup {dist}}}

\newcommand{\de}{\partial}
\newcommand{\ph}{\varphi}

\newcommand{\sign}{\textrm{sign}\,}
\renewcommand{\d}{{\rm d}}

\newcommand{\spt}{{\rm spt}\,}
\newcommand{\res}{\mathop{\hbox{\vrule height 7pt width .5pt depth 0pt
\vrule height .5pt width 6pt depth 0pt}}\nolimits}
\newcommand{\loc}{\textup{loc}}

\renewcommand{\and}{\quad \text{and} \quad}

\newcommand{\reg}{\textup{Reg}}
\newcommand{\sing}{\textup{Sing}}
\newcommand{\other}{\textup{Other}}
\renewcommand{\div}{\textup{div}}
\newcommand{\ctg}{\textup{cotg}\,}
\renewcommand{\top}{\textup{top}}
\newcommand{\low}{\textup{low}}
\newcommand{\sgn}{\textup{sign}}

\newcommand{\cG}{{\mathcal{G}}}

\newcommand{\cL}{{\mathcal{L}}}
\newcommand{\cH}{{\mathcal{H}}}

\newcommand{\cT}{{\mathcal{T}}}

\newcommand{\cS}{{\mathcal{S}}}

\newcommand\N{{\mathbb N}}

\newcommand\C{{\mathbb C}}

\newcommand{\ie}{\textit{i.e.}}
\newcommand{\eg}{\textit{e.g.}}

\title[The free boundary of the lower dimensional obstacle problem]
{On the measure and the structure of the free boundary 
of the lower dimensional obstacle problem}

\author[M.~Focardi]{Matteo Focardi}
\address{DiMaI, Universit\`a degli Studi di Firenze}
\curraddr{Viale Morgagni 67/A, 50134 Firenze (Italy)}
\email{focardi@math.unifi.it}
\author[E.~Spadaro]{Emanuele Spadaro}
\address{Universit\"at Leipzig}
\curraddr{Augustusplatz 10, 04109 Leipzig (Germany)}
\email{Emanuele.Spadaro@math.uni-leipzig.de}
\thanks{E.~S.~has been partially supported by the Gruppo Nazionale per l'Analisi Matematica, 
la Probabilit\`a e le loro Applicazioni (GNAMPA) of the Istituto Nazionale di Alta Matematica 
(INdAM) through a Visiting Professor Fellowship. 
E.~S.~is very grateful to the DiMaI ``U. Dini'' of the University of Firenze 
for the support during the visiting period. M.~F. is a member of the GNAMPA of INdAM. 
He warmly thanks the University of Leipzig for the support during a visit when part of this
work was conceived.}


\keywords{Thin obstacle problem, free boundary, rectifiability, blowup, uniqueness}

\date{}
\begin{document}
\begin{abstract}
We provide a thorough description of the
free boundary for the
lower dimensional obstacle problem in $\R^{n+1}$
up to sets of null $\cH^{n-1}$ measure.
In particular, we prove
\begin{itemize}
\item[(i)] local finiteness of the 
$(n-1)$-dimensional Hausdorff measure  
of the free boundary,
\item[(ii)] $\cH^{n-1}$-rectifiability of the free boundary,
\item[(iii)] classification of the frequencies
up to a set of dimension at most $(n-2)$ and classification of 
the blow-ups at $\cH^{n-1}$ almost every free boundary 
point.
\end{itemize}
\end{abstract}

\maketitle

%
%
\section{Introduction}

Thin obstacle-type problems naturally appear
in several models of applied sciences,
such as contact mechanics (cf.~the classical Signorini problem)
and, as pointed out more recently, in
free boundary problems for fractional diffusions,
such as quasi-geostrophic flows,
American options' pricing, anomalous diffusions etc{\ldots}
Due to their character of prototypical nonlinear and
non-local equations,
in the recent years this class of problems
has been intensively studied, culminating in
several important contributions and breakthroughs 
(cf., \textit{e.g.},~\cite{AtCa04, CaSi07,
AtCaSa08, CaSaSi08, GaPe09, DaSa15, KPS15, BaFiRo16, FoSp16, And16}).
Nevertheless, many important questions
are not yet answered, most importantly the ones
concerning the global structure of the free boundary,
which according to the available results in the literature
is not excluded to have infinite measure or to be fractal,
already in the simplest model cases.

Here we answer to this and to other related
questions, such as the uniqueness of blow-ups 
and the structure of the free boundary 
for solutions to the thin obstacle problem, 
giving a complete description of the top-stratum of the free 
boundary up to a set of $\cH^{n-1}$-measure zero. These results 
are new also in the framework of the classical 
Signorini problem in elasticity (for the antiplane case)
and they are obtained by a combination of analytical and geometric 
measure theory arguments which can be 
suitably exploited also for similar
free boundary type problems.

\subsection{The problem}
In this article we consider a class of
lower dimensional obstacle problems.
In order to state them,
for any subset $E \subset \R^{n+1}$ we set
\[
E^+ := E\cap\big\{x\in\R^{n+1}:\,x_{n+1}>0\big\}
\and
E' := E \cap \big\{x_{n+1}=0\big\}.
\]
For any point $x \in \R^{n+1}$ we will write $x = (x', x_{n+1})\in \R^{n} 
\times \R$.
Moreover, $B_r(x) \subset \R^{n+1}$ denotes the open ball centered
at $x\in \R^{n+1}$ with radius $r>0$, and $\overline{B_r}(x)$ its closure
(we omit to write the point $x$ if the origin).
For every $R>0$, we  denote by $\mathscr{A}_R$
the set of functions in the weighted Sobolev space 
$H^1(B_R,|x_{n+1}|^a\cL^{n+1})$, with $a\in(-1,1)$,
which are even symmetric with respect to $x_{n+1}$
and which have positive traces on $B_R'$:
\begin{align*}
\mathscr{A}_R:=\Big\{v\in H^1(B_R,|x_{n+1}|^a\cL^{n+1}): 
\,v(x', x_{n+1})=v(x',- 
x_{n+1})
\and v(x',0)\geq 0 \Big\}.
\end{align*}
The thin obstacle problems we consider are then the following:
\begin{equation}\label{e:ob-pb local}
\begin{cases}
u(x',0) \geq 0 & \text{for }\; (x',0)\in B_R',\\
u(x',x_{n+1}) = u (x', -x_{n+1}) &\text{for }\; x=(x',x_{n+1})\in 
B_R,\\
\div\big(|x_{n+1}|^a \nabla u(x)\big) = 0  & \text{for }\; 
x \in B_R \setminus \big\{(x',0)\,:\,
u(x',0) = 0 \big\},\\
\div\big(|x_{n+1}|^a \nabla u(x)\big) \leq 0 & \text{in the sense of 
distribution in }\,B_R,\\
u (x) = g(x) & \text{for }\; x\in \de B_R,
\end{cases}
\end{equation}
where $g \in \mathscr{A}_R$ is a given boundary value datum.
Note that \eqref{e:ob-pb local} are the Euler--Lagrange
equations satisfied by the unique minimizer of the energy
\[
\int_{B_R} |\nabla v|^2 |x_{n+1}|^a\d x
\]
in the class 
$\mathscr{A}_{R,g}:=\mathscr{A}_R \cap \big\{g + 
H^1_0(B_R,|x_{n+1}|^a\cL^{n+1})\big\}$. 
In particular, in case $a=0$ problem
\eqref{e:ob-pb local}
corresponds to the well-known scalar Signorini problem.
We denote by $\Lambda(u)$ the \textit{coincidence set}
of a solution $u$,
\[
\Lambda(u) := \big\{(x', 0) \in B_R'\,: u(x', 0) =0 \big\},
\]
and by $\Gamma(u)$ its \textit{free boundary},
which is the topological boundary of $\Lambda(u)$ in the relative topology of 
$B_R'$.
In order to avoid unnecessary complications, 
in this work we consider the case of zero obstacle 
prescribed on flat hypersurfaces only.
Nevertheless, the techniques developed in the paper
can be generalized to consider non-constant and
non-flat obstacles, 
as well as for other free boundary problems
(such as the fractional obstacle problem, for
which the analogous results of this paper are going to appear
in a future work). Moreover, we set 
\[
s:=\frac{1-a}{2}
\]
throughout the whole paper.

\subsection{A short survey of the existing literature}
In the last years there has been an intensive research activity
in trying to set up the regularity properties of the solutions to 
\eqref{e:ob-pb local} and the corresponding free boundaries. 
We resume in what follows the state of the art for what concerns
the zero obstacle case. To this aim 
we introduce the following notation for the rescalings
of a solution $u$:
for every $x_0 \in \Gamma(u)$ and $r>0$, we set
\begin{gather}\label{e:rescaling-1}
\bar u_{x_0,r}(y) :=
\frac{r^{\frac{n+a}{2}}u(x_0+r\,y)}{\Big(\int_
{\de B_r} 
u^2\,|x_{n+1}|^{a}\,\d\cH^{n}\Big)^{\sfrac12}}
\quad\quad \forall\; y\in 
B_{\frac{R-|x_0|}{r}}.
\end{gather}
By \cite[Section 6]{CaSaSi08} the collection of functions 
$\{\bar u_{x_0,r}\}_{r>0}$ is pre-compact in the weighted Sobolev
space $H^1_{\loc}(\R^{n+1},|x_{n+1}|^a\,\cL^{n+1})$.
Their limiting points are
called \textit{blow-ups} of $u$ at $x_0$ and are
homogeneous functions, whose
homogeneity depends only on $x_0$ and not 
on the extracted subsequence (for a proof see also
Corollary~\ref{c:compactness} and Remark~\ref{r:freq modificata} 
below). 
The set of all blow-ups of a solution $u$ at $x_0$ is denoted by 
$\textup{BU}(x_0)$, and their common homogeneity 
$\lambda(x_0)$ is called the \textit{infinitesimal homogeneity} 
or the \textit{frequency} of $u$ at $x_0$
(this is indeed the limiting value, as the radius vanishes, 
of an Almgren's type frequency function). 

\medskip

The following statements summarize several results 
available in the current literature.

\smallskip

\noindent{\bf A. Optimal regularity of $u$.}
The solutions $u$ to \eqref{e:ob-pb local} are one-sided 
$C^{1,s}$, $s= \sfrac{(1-a)}{2}$.
More precisely, 
$u \in \textrm{Lip}(B_1)\cap C^{1,s}(B_1^\pm\cup B_1')$, as 
proved by Athanasopoulos and Caffarelli \cite{AtCa04} for $a=0$, and
by Caffarelli, Salsa and Silvestre \cite{CaSaSi08} for all $a \in (-1,1)$
(see also
\cite{Freh75, Freh77, Caffa79,Kind81,Ural85, Ural87,Si07}
for previous results).

\smallskip

\noindent{\bf B. Free boundary regularity.}
The free boundary $\Gamma(u)$ can 
be split as:
\begin{equation}\label{e:decomposizione free boundary}
\Gamma(u) = \reg(u) \cup \sing(u)\cup \other(u),
\end{equation}
with these subsets being pairwise disjoint, and more precisely
\begin{itemize}
\item[(i)] $\reg(u)$ is the subset of points in $\Gamma(u)$ in which 
blow-ups
are $(1+s)$-homogeneous.
$\reg(u)$ is relatively open in $\Gamma(u)$ and it is
an analytic $(n-1)$-dimensional submanifold of $\R^{n+1}$
(the $C^{1,\alpha}$ regularity has been shown in \cite{AtCaSa08,CaSaSi08}
-- see also \cite{FoSp16, Geraci}
for a different proof based on the epiperimetric inequality;
higher regularity follows from \cite{DaSa15, KPS15});

\item[(ii)] $\sing(u)$ is the subset of points in $\Gamma(u)$ for 
which the blow-ups are $2m$-homogeneous. In the case of 
the Signorini problem $a=0$, they are also characterized by the 
fact that their contact sets have density zero with respect to $\cH^n$. 
Furthermore, in such a case Garofalo and Petrosyan \cite{GaPe09} 
proved that $\sing(u)$ is contained in a countable 
union of $C^1$-regular $(n-1)$-dimensional submanifolds.
\end{itemize}

\smallskip

\noindent{\bf C. Blow-up analysis.}
The blow-ups of $u$ at a free 
boundary point $x_0$ satisfy the ensuing properties:
\begin{itemize}
\item[(i)] $\textup{BU}(x_0)\subseteq\cH_{\lambda(x_0)}$, 
the latter set being the positive cone of 
$\lambda(x_0)$-homogeneous local solutions to 
\eqref{e:ob-pb local} even with respect to 
$x_{n+1}$. 
Moreover, the possible values of the frequency $\lambda(x_0)$ 
lie in the set $\{1+s\}\times[2,+\infty)$ 
(cf.~\cite{CaSaSi08});

\item[(ii)] the blow-ups are unique both at every point of $\reg(u)$ 
(cf.~\cite{CaSaSi08}), and at every point of $\sing(u)$ 
for the Signorini problem $a=0$ (cf.~\cite{GaPe09}).
\end{itemize}

Despite these significant achievements, 
many issues on the analysis of the regularity of the free boundary
and the corresponding blow-ups of solutions to \eqref{e:ob-pb local}
remain still unsolved, even for the scalar Signorini problem.
The most striking 
fact is
that nothing is known about the global nature of the
free boundary, which in principle 
is not known to
have the right dimensionality of a boundary in 
$\R^n\times\{0\}$ ({\ie}, $n-1$), nor it is known to
retain any boundary-like structure (as far as we know,
$\Gamma(u)$ can be even fractal).
In particular, there are no results about the subset 
of free boundary points 
$\other(u)$, which are neither regular nor singular
(according to the definitions in literature).
On the other hand, explicit examples show that
$\other(u)$ is in general not empty, and indeed it may coincide with the full 
free boundary (cf. \S~\ref{s:blow-up})! 

\subsection{The main results of the paper}
In this paper we answer to some of the questions mentioned above,
such as that concerning the dimension of the free boundary, and
we give a comprehensive description of the set $\other(u)$ 
and $\sing(u)$ in the general case $a \in (-1,1)$ up to a null $\cH^{n-1}$ set.
Our results are already new for the case of the Signorini problem $a=0$
and extend in various directions what was previously known.
In particular, the short outcome of our analysis is the
global picture of the free boundary
of the thin obstacle problem as an $(n-1)$-dimensional set
with locally finite measure (in fact with
finite Minkowski content) satisfying almost everywhere
a similar stratification as for the 
classical obstacle problem (including some uniqueness
results of the blow-ups), 
cf.~\cite{Caffa77, Caffa98, Weiss, Monneau03}.

\medskip

We start off showing that the free boundary is 
$(n-1)$-dimensional in a strong
measure theoretic sense.

\begin{theorem}\label{t:misura}
Let $u$ be a solution to the thin obstacle problem 
\eqref{e:ob-pb local} in $B_1$.
Then, the free boundary $\Gamma(u)$ has locally finite
$(n-1)$-dimensional Minkowski content:
\ie, for every $K \subset \subset B_1'$ there
exists a constant $C(K)>0$ such that
\begin{equation}\label{e:misura1}
\cL^{n+1}\big(\cT_r(\Gamma(u) \cap K) \big) \leq C(K)\,r^2
\quad \forall\; r \in (0,1),
\end{equation}
where $\cT_r(E) := \{ x \in \R^{n+1}: \dist(x,E) <r\}$ for all 
$E\subseteq\R^n$.
\end{theorem}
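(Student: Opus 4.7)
The plan is to deduce the Minkowski estimate \eqref{e:misura1} from the Naber--Valtorta quantitative stratification machinery, applied to the Almgren-type frequency function associated with \eqref{e:ob-pb local}. Two ingredients drive the argument, and both are already available from the existing theory: the frequency $N(x_0,r)$ is non-decreasing in $r$ at every free boundary point $x_0 \in \Gamma(u)$, with limit $\lambda(x_0)\geq 1+s>1$; and the family of rescalings $\{\bar u_{x_0,r}\}$ defined in \eqref{e:rescaling-1} is pre-compact in the weighted Sobolev space, with homogeneous limits lying in $\cH_{\lambda(x_0)}$. These play the role, respectively, of the universal \emph{gap away from triviality} and of the \emph{rigidity/compactness} underlying the Naber--Valtorta scheme.

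I would then introduce quantitative strata $S^k_{\eta,r}(u)\subseteq \Gamma(u)$: the set of free boundary points $x_0$ such that no rescaling $\bar u_{x_0,\rho}$, $\rho\geq r$, is $\eta$-close in weighted $L^2(B_1)$ to a $(1+s)$-homogeneous solution which is translation-invariant along some $(k+1)$-dimensional affine subspace. Because every blow-up at a free boundary point is a non-trivial homogeneous solution satisfying the sign and even symmetry constraints on $\R^n\times\{0\}$, none of them can be translation-invariant along the whole thin manifold; a soft compactness argument upgrades this to the assertion $\Gamma(u)\subseteq S^{n-1}_{\eta_0,0}$ for some universal $\eta_0=\eta_0(n,a)>0$. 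Applying the Naber--Valtorta covering theorem to $S^{n-1}_{\eta_0,r}$ then produces a cover of $\Gamma(u)\cap K$ by balls of radius $r$ whose cardinality is bounded by $C(K)r^{-(n-1)}$, which immediately yields the Minkowski bound $\cL^{n+1}(\cT_r(\Gamma(u)\cap K))\leq C(K)\,r^2$.

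To run the covering theorem, two technical inputs must be established. First, a \emph{quantitative rigidity} statement: a small frequency drop of $u$ across an annular scale forces $\bar u_{x_0,\rho}$ to be $L^2$-close to a homogeneous solution. Second, a \emph{cone-splitting} lemma: if $u$ is nearly $(1+s)$-homogeneous at $x_0$ and at sufficiently many further points spanning a $j$-plane, then at $x_0$ it is nearly translation-invariant along that plane. The main obstacle I expect is precisely the cone-splitting step in the weighted and obstacle context: one needs to pass to the limit while respecting both the degenerate weight $|x_{n+1}|^a$ and the unilateral constraint on the thin manifold, in order to conclude that a limiting homogeneous solution acquires the additional translation symmetry without losing the obstacle condition. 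Once these two ingredients are secured, the Naber--Valtorta packing applies verbatim, yielding \eqref{e:misura1} and hence a fortiori the local finiteness of $\cH^{n-1}(\Gamma(u))$.
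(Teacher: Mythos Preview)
Your outline correctly identifies that the Naber--Valtorta machinery is the right framework and that the spine of any blow-up has dimension at most $n-1$, so that $\Gamma(u)$ sits in the top quantitative stratum. However, there is a genuine gap in the technical inputs you list. The two items you name --- quantitative rigidity (small frequency drop $\Rightarrow$ $L^2$-closeness to a homogeneous solution) and cone-splitting --- are precisely the ingredients that feed into the \emph{Cheeger--Naber} quantitative stratification. That scheme only produces the weaker bound $\cL^{n+1}(\cT_r(S^{n-1}_{\eta,r}))\leq C_\eta\,r^{2-\varepsilon}$ for every $\varepsilon>0$, and the constant blows up as $\varepsilon\downarrow 0$. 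It does \emph{not} deliver the sharp Minkowski content estimate \eqref{e:misura1}.

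To obtain the sharp exponent, the Naber--Valtorta covering/Reifenberg argument needs a genuinely different and quantitative input: control of the Jones mean-flatness $\beta_\mu(x,r)^2$ of an arbitrary measure $\mu$ supported on $\Gamma(u)$ by the spatial average of the radial frequency drop $I_u(\cdot,Cr)-I_u(\cdot,cr)$ (cf.~Proposition~\ref{p:mean-flatness vs freq} in the paper). This is the ``$\beta^2\leq$ density drop'' estimate that drives the discrete Reifenberg theorem, and it is what your proposal is missing. Moreover, in the frequency setting this estimate is not a direct consequence of monotonicity: because the rescalings \eqref{e:rescaling-1} are normalized by an \emph{intrinsic} factor depending on $u$ rather than a dimensional one, you first need an additional spatial oscillation estimate for $x\mapsto I_u(x,r)$ at fixed scale (Proposition~\ref{p:D_x frequency}, obtained via new inner variations as in \cite{DMSV17}). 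The paper then combines this $\beta$-estimate with an inductive covering argument on the level sets of a maximal frequency $\Theta_u$ and the discrete Reifenberg theorem of \cite{NaVa1} to close the induction and obtain \eqref{e:misura1}. Without the $\beta$-number estimate, your route stops at the $r^{2-\varepsilon}$ bound.
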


Next, we prove the following geometric regularity result 
for the free boundary
establishing its $\cH^{n-1}$-rectifiability.

\begin{theorem}\label{t:rect}
Let $u$ be a solution to the
thin obstacle problem \eqref{e:ob-pb local}
in $B_1$. Then, there exist at most 
countably many  $C^1$-regular submanifolds $M_i$
of dimension $n-1$ in $\R^{n+1}$ such that 
\begin{equation}\label{e:rect1}
\cH^{n-1}\Big(\Gamma(u) \setminus \bigcup_{i\in\N} M_i\Big) = 0.
\end{equation}
\end{theorem}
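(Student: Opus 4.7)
The plan is to combine Theorem~\ref{t:misura} with a Naber--Valtorta rectifiable Reifenberg argument driven by Almgren's frequency function. As a preliminary reduction, the regular stratum $\reg(u)$ is already an analytic $(n-1)$-dimensional submanifold by part~B(i) of the survey above, so it is trivially rectifiable; the task therefore reduces to covering $\sing(u)\cup\other(u)$, on which the frequency satisfies $\lambda(x_0)\geq 2$, by countably many $C^1$-regular $(n-1)$-submanifolds up to an $\cH^{n-1}$-null set.

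The core step is a quantitative stratification tailored to \eqref{e:ob-pb local}. For parameters $\eta>0$ and $r\in(0,1]$, I would define $S^{n-2}_{\eta,r}$ as the set of $x_0\in\Gamma(u)$ at which no rescaling $\bar u_{x_0,\rho}$ with $\rho\in[r,1]$ is $\eta$-close, in the weighted $L^2$ norm, to a homogeneous solution of \eqref{e:ob-pb local} that is translation invariant along some $(n-1)$-dimensional subspace of $\R^n\times\{0\}$. A Federer-type dimension reduction applied to the cone $\cH_{\lambda(x_0)}$ of admissible blow-ups, whose maximal spine dimension is $n-1$, forces $S^{n-2}_\eta:=\bigcap_{r>0}S^{n-2}_{\eta,r}$ to have Hausdorff dimension at most $n-2$, and thus to be $\cH^{n-1}$-negligible. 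On its complement, at all sufficiently small scales the coincidence set is $\eta$-flat against some affine $(n-1)$-subspace, which is the flatness hypothesis of the Naber--Valtorta rectifiable Reifenberg theorem. The measure input of that theorem is furnished by the Minkowski content bound \eqref{e:misura1}, while the summable $L^2$ mean-flatness input is obtained by telescoping the frequency drops over dyadic scales, so that the contribution of each annulus is absorbed into the total variation of Almgren's bounded monotone frequency. Passing to the countable union over $\eta=1/k$ then produces the desired countable family $\{M_i\}_{i\in\N}$.

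The principal obstacle is the quantitative mean-flatness estimate at points of $\other(u)$, where $\lambda(x_0)$ need not lie in the discrete set $\{1+s\}\cup 2\N$ and where uniqueness of blow-ups is not known a priori. One has to bound the $L^2$ deviation of $\Gamma(u)\cap B_r(x_0)$ from its best-approximating affine $(n-1)$-subspace through $x_0$ by the frequency drop on $[r,2r]$, a control that must be established by a compactness-contradiction argument combining the analysis of $\lambda$-homogeneous solutions of \eqref{e:ob-pb local} with a non-degeneracy estimate on the coincidence set valid uniformly for all admissible $\lambda\in[2,\infty)$.
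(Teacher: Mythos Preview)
Your overall instinct---that the rectifiability should come from a Naber--Valtorta mechanism in which the $L^2$ mean-flatness $\beta_\mu^2$ is controlled by frequency drops, and that the summability follows by telescoping these drops---is correct and is exactly what the paper does. However, the route you outline differs from the paper's in two substantive ways, and the second of these is a genuine gap.

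First, the paper does \emph{not} pass through the quantitative stratification sets $S^{n-2}_{\eta,r}$ that you introduce. Instead it proves, for \emph{any} finite measure $\mu$ supported on $\Gamma(u)$, a direct pointwise bound of the form
\[
\beta_\mu^2(p,r)\;\leq\;\frac{C}{r^{n-1}}\int_{B_r(p)}\big(I_u(x,Cr)-I_u(x,c\,r)\big)\,\d\mu(x)
\]
(Proposition~\ref{p:mean-flatness vs freq}), valid at every free boundary point and every scale. With this in hand, one applies it to $\mu=\cH^{n-1}\res E_l$ for the density sets $E_l$ coming from Theorem~\ref{t:misura}, sums over dyadic scales, and concludes via the Azzam--Tolsa criterion (Theorem~\ref{t:AT}). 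No preliminary removal of $\reg(u)$ and no stratification into $S^{n-2}_\eta$ is needed; the argument is uniform across the whole free boundary. Your stratification step is not only unnecessary but potentially delicate: if a rescaling $\bar u_{x_0,\rho}$ is $\eta$-close to a homogeneous solution of type~(III) in Lemma~\ref{l:classification} (frequency $2m+2s$), the limiting free boundary is \emph{empty}, so proximity of the rescaling to an $(n-1)$-invariant homogeneous solution does not by itself yield $\eta$-flatness of $\Gamma(u)\cap B_\rho(x_0)$ against an affine $(n-1)$-plane. The paper handles this phenomenon elsewhere (via the nodal set $\mathcal N(u)$ in Proposition~\ref{p:rigidity}, used for the measure bound), but for rectifiability it is simply bypassed by the direct $\beta$-estimate.

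Second, and more importantly, the ``principal obstacle'' you identify---bounding the $L^2$ mean-flatness by the frequency drop---is \emph{not} established in the paper by the compactness--contradiction argument you propose. It is proved by a direct computation: the linear-algebraic characterization of $\beta_\mu^2$ via the eigenvalues of the second-moment form (\S\ref{s:mean-flatness}), combined with the spatial-oscillation estimate for the frequency (Proposition~\ref{p:D_x frequency}) and the radial monotonicity remainder (Lemma~\ref{l:monotonia}), yields Proposition~\ref{p:mean-flatness vs freq} constructively. A compactness--contradiction approach requiring a ``non-degeneracy estimate on the coincidence set valid uniformly for all admissible $\lambda\in[2,\infty)$'' is not available in the literature and would be difficult precisely because of the type-(III) blow-ups above, whose coincidence sets are all of $\R^n\times\{0\}$. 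The constructive linear-algebra argument, following \cite{NaVa1,DMSV17}, is what makes the proof go through.
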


The last result concerns one of the major open
question in the field, namely to determine the 
possible values of the frequency $\lambda(x_0)$, or 
equivalently the smallest set 
$J\subset(0,\infty)$ for which $\textup{BU}(x_0)
\subseteq\cup_{\lambda\in J}\cH_\lambda$
for every $x_0\in\Gamma(u)$,
recall that 
$\cH_\lambda$ denotes the set of $\lambda$-homogeneous solutions
to \eqref{e:ob-pb local}. 
As explained in C.~(i) above, it is known that
\[
\{2m, 2m-1+s\}_{m\in\N\setminus\{0\}}
\subseteq
J
\subseteq 
\{1+s\}\times[2,\infty).
\]
Moreover,  by definition
$\lambda(x_0) = 1+s$ for all
$x_0\in\reg(u)$ and 
$\lambda(x_0) \in2\N\setminus\{0\}$ for all
$x_0\in\sing(u)$.
In the following theorem we make a step forward to clarify
this stage.

\begin{theorem}\label{t:frequency}
Let $u$ be a solution to the lower dimensional obstacle 
problem \eqref{e:ob-pb local} in $B_1$. Then, there exists a 
subset $\Sigma(u)\subset \Gamma(u)$ with Hausdorff dimension 
at most $n-2$ such that
\[
\lambda(x_0) \in \{2m, 2m-1+s, 2m+2s\}_{m\in\N\setminus\{0\}}
\quad\forall\;x_0\in \Gamma(u)\setminus \Sigma(u).
\]
In addition, for $\cH^{n-1}$-a.e.~point 
$x_0\in \Gamma(u)\setminus \Sigma(u)$
with frequency
$\lambda(x_0)\in\{2m,2m-1+s\}_{m\in\N\setminus\{0\}}$
the blow-up of $u$ at $x_0$ is unique and depends on
two variables only: namely,
\[
\textup{BU}(x_0) = \Big\{\bar h_{\lambda(x_0)}(x\cdot e_{x_0},x_{n+1})\Big\},
\]
for some $e_{x_0} \in \R^{n+1}$ with
$|e_{x_0}|=1$ and $e_{x_0}\cdot e_{n+1} = 0$,
and $\bar h_{\lambda(x_0)}$ uniquely determined by $\lambda(x_0)$.
\end{theorem}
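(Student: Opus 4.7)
The plan is to combine the rectifiability of $\Gamma(u)$ provided by Theorem~\ref{t:rect} with a Federer-type dimension reduction, then to classify the admissible two-dimensional blow-ups, and finally to establish uniqueness at the claimed frequencies through a Monneau-type monotonicity argument.

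\textbf{Step 1 (stratification and reduction to 2D blow-ups).} For $k=0,\ldots,n-1$ denote by $\Gamma^k(u)$ the set of $x_0\in\Gamma(u)$ at which no blow-up in $\textup{BU}(x_0)$ is invariant under more than a $k$-dimensional space of translations in $\{x_{n+1}=0\}$. The upper semi-continuity of the frequency, the pre-compactness of the rescalings $\bar u_{x_0,r}$ and a standard Federer-type dimension reduction argument give $\dim_{\cH}(\Gamma^k(u))\leq k$ for every $k\leq n-2$. Setting
\[
\Sigma(u):=\bigcup_{k\leq n-2}\Gamma^k(u),
\]
one obtains $\dim_{\cH}(\Sigma(u))\leq n-2$. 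On the other hand Theorem~\ref{t:rect} guarantees that at $\cH^{n-1}$-a.e.~$x_0\in\Gamma(u)$ the free boundary admits an approximate $(n-1)$-tangent plane $T_{x_0}\Gamma(u)\subset\{x_{n+1}=0\}$; since the coincidence set of every blow-up of $u$ at $x_0$ must be a closed cone whose topological boundary contains $T_{x_0}\Gamma(u)$, a homogeneity argument forces each $v\in\textup{BU}(x_0)$, for $x_0\in\Gamma(u)\setminus\Sigma(u)$, to be translation-invariant along an $(n-1)$-plane; hence $v$ depends only on $x_{n+1}$ and on one direction $e_{x_0}\in\{x_{n+1}=0\}$ with $|e_{x_0}|=1$.

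\textbf{Step 2 (classification of 2D profiles).} Each such $v$ is a $\lambda(x_0)$-homogeneous, non-negative, $x_{n+1}$-even solution of the weighted thin obstacle problem in the plane spanned by $e_{x_0}$ and $e_{n+1}$. Writing $v=r^\lambda\psi(\theta)$ in polar coordinates the PDE reduces to the Jacobi-type Sturm--Liouville equation
\[
\big(|\sin\theta|^a\psi'(\theta)\big)'+\lambda(\lambda+a)\,|\sin\theta|^a\,\psi(\theta)=0
\]
with complementary boundary conditions encoding either the absence of 1D contact or a half-line contact set. An explicit eigenvalue analysis produces precisely the two admissible families of exponents $\{2m\}_{m\in\N\setminus\{0\}}$ (empty contact) and $\{2m-1+s,\,2m+2s\}_{m\in\N\setminus\{0\}}$ (half-line contact), establishing the first part of the theorem. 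Moreover for each $\lambda\in\{2m,\,2m-1+s\}$ the corresponding profile $\bar h_\lambda$ is rigid, i.e.~uniquely determined up to scaling by $\lambda$ and by the direction $e_{x_0}$.

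\textbf{Step 3 (uniqueness of the blow-up).} To conclude, one must show that at $\cH^{n-1}$-a.e.~$x_0\in\Gamma(u)\setminus\Sigma(u)$ with $\lambda(x_0)\in\{2m,2m-1+s\}$ the direction $e_{x_0}$ is uniquely selected, independently of the vanishing scale $r\to 0^+$. The approach is to prove a Monneau-type monotonicity formula of the form
\[
r\longmapsto r^{-(2\lambda(x_0)+n+a)}\int_{\de B_r(x_0)}\big(u(x)-\bar h_{\lambda(x_0)}\big((x-x_0)\cdot e,\,x_{n+1}\big)\big)^2\,|x_{n+1}|^a\,\d\cH^n(x),
\]
for every admissible direction $e$; its $r$-derivative should be non-negative thanks to the $\lambda$-homogeneity of $\bar h_{\lambda(x_0)}$ and the minimality of $u$. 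Combined with Almgren's frequency monotonicity and Step~1, this rigidifies the blow-up. The main obstacle is to establish the monotonicity itself at the newly identified exponents $\lambda=2m-1+s$ with $m\geq 2$, for which the classical Monneau computation (designed for the singular, even frequencies $\lambda=2m$) does not apply directly: the plan is to derive an epiperimetric-type inequality around each 2D profile $\bar h_{2m-1+s}$, in the spirit of the one used at regular points $\lambda=1+s$ in \cite{FoSp16,Geraci}, exploiting the two-dimensional rigidity isolated in Step~1. A standard comparison argument then rules out rotation of $e_{x_0}$ along subsequences and yields uniqueness of the blow-up.
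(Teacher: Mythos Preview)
Your Step~1 and the first half of Step~2 essentially follow the paper's route for the frequency classification: the Federer-type stratification (the paper's Theorem~\ref{t:straFICA}) bounds the dimension of the set where all blow-ups have spine of dimension at most $n-2$, and the 2D classification (Lemma~\ref{l:classification}) then pins down the allowed homogeneities. One correction in Step~2: the exponent $2m+2s$ corresponds to the \emph{full} contact set $\Lambda=\{x_{n+1}=0\}$, not a half-line; the half-line case gives only $2m-1+s$.

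The uniqueness argument, however, has two genuine gaps. First, in Step~1 you assert that every blow-up at $x_0\notin\Sigma(u)$ is $(n-1)$-invariant because ``the coincidence set of every blow-up \ldots\ must be a closed cone whose topological boundary contains $T_{x_0}\Gamma(u)$''. This fails: when $\lambda(x_0)=2m+2s$ the blow-up's coincidence set may be all of $\{x_{n+1}=0\}$ with empty boundary, and more importantly, even when you do know that the tangent plane lies in the \emph{nodal set} $\mathcal N(v)$ of a blow-up $v$ (which is what actually follows from convergence of free boundary points, cf.~\eqref{e:cpt5}), this does not by itself force $v$ to be invariant along that plane. The paper closes this gap with a separate classification result (Proposition~\ref{p:classiFICAzione 2d improved-0}): among $\lambda$-homogeneous solutions of the Euler--Lagrange system, \emph{without} the obstacle constraint, with $\lambda\in\{2m,2m-1+s\}$ and with contact set containing a prescribed hyperplane, the only ones are the 2D profiles $c\,h_\lambda(\pm x\cdot e,x_{n+1})$. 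This is a nontrivial PDE fact, proved in the appendix by tangential differentiation and induction on the degree.

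Second, your Step~3 is by your own admission a plan rather than a proof: a Monneau formula at $\lambda=2m-1+s$ for $m\geq2$ is not known, and no epiperimetric inequality is available at these frequencies. The paper avoids this entirely. Its argument is: at density-one points $x_0$ of the rectifiable pieces $\Gamma(u)\cap M_i$, a Blaschke/Hausdorff-limit argument shows $\text{Tan}_{x_0}M_i\subset\{v=0\}$ for \emph{every} blow-up $v$; the classification above then forces $v$ to be one of at most two functions (for $\lambda=2m$ just one; for $\lambda=2m-1+s$ the two reflections $h_\lambda(\pm x\cdot e_{x_0},x_{n+1})$); finally, connectedness of $\textup{BU}(x_0)$ (the map $r\mapsto u_{x_0,r}$ is continuous in $L^2$) rules out oscillation between the two reflections and yields uniqueness. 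No monotonicity beyond Almgren's is needed.
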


\subsection{Comments on the main results}\label{s:commnets}

A few remarks are in order.

\subsubsection{Finite measure}
The main consequence of Theorem~\ref{t:misura} 
is that the free boundary has locally finite $\cH^{n-1}$ measure:
\begin{equation}\label{e:misura2}
\cH^{n-1} \big(\Gamma(u)\cap K \big) <+\infty \quad \forall \; K \subset \subset 
\R^n.
\end{equation} 
Nevertheless, the estimate on the Minkowski content is significantly 
stronger: among the other consequences,
\eqref{e:misura1} implies, for instance, 
that the free boundary is nowhere dense.
In addition, Theorem~\ref{t:rect} establishes that the 
free boundary is a $\cH^{n-1}$-rectifiable set, a 
piece of information which cannot be deduced nor implies 
the estimate on the Hausdorff measure \eqref{e:misura2}.

The estimate on the Hausdorff 
dimension of the free boundary can be deduced
independently from Theorem~\ref{t:misura} 
and Theorem~\ref{t:rect} by a different
and more direct stratification argument 
(cf.~Theorem~\ref{t:straFICA}).

\subsubsection{Structure of the free boundary}
Theorem~\ref{t:rect} extends the analysis of 
the structure of the free boundary points to a subset of full measure 
of $\sing(u) \cup \other(u)$.
Note that the structure of the points in $\sing(u)$ for $a \neq 0$
had not been dealt with before in the literature.
Nevertheless, Theorem~\ref{t:rect} does not imply the pointwise results 
in B.~(i) {\&} (ii) for $\reg(u)$ and $\sing(u)$, for the latter 
set if $a=0$, because we prove a measure theoretic regularity 
property for $\Gamma(u)$, namely its $\cH^{n-1}$-rectifiability 
(cf. \eqref{e:rect1}).

\subsubsection{Frequency}
Points with frequencies $2m-1+s$ and $2m+2s$, 
with $m\in\N\setminus\{0,1\}$, belong 
to $\other(u)$, though it is not known whether 
they do exhaust such a set or not in general.
In other words, the problem of classifying all possible
frequencies for free boundary points is settled
by Theorem~\ref{t:frequency} only up to sets 
of dimension at most $n-2$, but it remains open pointwise.

Moreover, if on one hand there are examples
of free boundary points with frequency 
$2m$ and $2m-1+s$, on the other hand
there are no examples of points with frequency $2m+2s$.
In dimension $n=1$ one can show that such points
do not exist, that is
$\other(u)=\{2m-1+s\}_{m\in\N\setminus\{0,1\}}$ and 
also that $\Sigma(u)=\emptyset$
(see \S~\ref{s:blow-up} -- 
the case $a=0$ has been discussed in \cite{GaPe09}).
In higher dimensions it is then natural to 
conjecture the same results. 

The reason why we are unable to rule out points with 
frequencies $2m+2s$ if $n\geq2$
is related to the existence of $(2m+2s)$-homogeneous 
solutions with contact set $\Lambda = \R^n\times\{0\}$, which potentially
could arise as blow-ups in a free boundary point (with the free boundary 
disappearing in the limit).
This possibility might seem an apparent and 
striking discrepancy with the measure estimate and the 
structure result of Theorems~\ref{t:misura} 
and \ref{t:rect} and make these results in some sense
surprising (see \S~\ref{ss:optimality} for further comments).

Finally, the estimate on the Hausdorff dimension 
of $\Sigma(u)$ follows from its inclusion in the subset of 
points of $\Gamma(u)$ whose blow-ups have at most $(n-2)$ 
directions of invariance, 
for such a set the dimensional estimate is actually sharp 
(cf. Theorem~\ref{t:straFICA}).

\subsubsection{Blow-ups}
The uniqueness of blow-ups provided by
Theorem~\ref{t:frequency} at points
of the free boundary with frequency $2m$ and $2m-1+s$
univocally describes the infinitesimal behaviour 
of the solution $u$.
In particular, it shows that the solutions look
locally like a homogeneous function of a single horizontal 
variable and of $x_{n+1}$.
Note that, for any different choice of the 
renormalization of the rescalings \eqref{e:rescaling-1},
either the limit does not exist or it is a multiple of $\bar h_{\lambda}$,
thus justifying the notion of unique limiting profile.

For the prospective points with frequency $2m+2s$,
as a by-product of the results in Appendix~\ref{a:appendix}, we are
also able to classify all possible blow-ups.


\subsection{Concerning the proofs}
Our analysis is based on geometric 
measure theory techniques, which
exploit and develop some ideas recently  introduced
in the context of minimal surfaces theory.
The point of view we adopt is new in the theory
of free boundaries and we believe it has many 
potentialities for other related problems.

The proof is based on the ideas and the techniques
recently introduced by Naber--Valtorta \cite{NaVa1, NaVa2}
in the context of minimal surfaces and harmonic maps.
The main ingredients of our study are (a variant of)
Almgren's frequency function 
and the Peter Jones' number $\beta^{(n-1)}_\mu$ pertaining
to a suitable measure $\mu$ supported on the free boundary $\Gamma(u)$
(the terminology \textit{mean flatness} is also adopted in 
literature to term the $\beta$-numbers, since they provide 
an integral control of the flatness of the support of the 
underlying measure $\mu$, see \cite{AmFuPa}).
The starting point is the striking observation
by Naber--Valtorta \cite{NaVa1, NaVa2}
that the square power of the mean flatness can be 
controlled by an average of the oscillations of 
a monotone density.
Indeed, when this happens, a careful covering argument
\cite{NaVa1, NaVa2}, and recently developed rectifiability criteria
by David--Toro \cite{DaTo12}, Azzam--Tolsa
\cite{AzTo15} and Naber--Valtorta \cite{NaVa1, NaVa2}
lead to the local finiteness and the rectifiability of the singular sets of 
minimal surfaces and harmonic maps (see also the paper by De Lellis, Marchese,
Spadaro and Valtorta \cite{DMSV17} 
for an extension to a special case in higher co-dimension).

For our analysis of the thin obstacle problem,
we generalize and develop these approaches.
The starting point is an estimate of the mean flatness 
with respect to a Borel measures $\mu$ supported on the free boundary 
with the spatial oscillation of Almgren's frequency function
(cf. Proposition~\ref{p:mean-flatness vs freq}).
Note that the case of the frequency function
is different from the mass ratio of a minimal surface, because
the renormalization factor is intrinsically defined by the solution itself
(usually a variant of the $L^2$-norm at the boundary of a ball), 
instead of being purely dimensional. This requires a novel estimate
for the frequency of the solutions to the lower dimensional obstacle problem, 
which is based on a different set of spatial variations
and is proven in Proposition~\ref{p:D_x frequency}: here we follow closely 
ideas of \cite{DMSV17}, where an analogous estimate is proved in the context 
of multiple-valued functions as a result of this spatial variations of the
frequency.

A careful analysis of the rigidity properties of 
homogeneous solutions to the thin obstacle problem \eqref{e:ob-pb local}
(cp.~Proposition~\ref{p:rigidity}) is then necessary for our
argument. To this aim, as in general no growth estimate
from below for solutions 
from the free boundary are at disposal, it is mandatory for us 
to introduce the set of nodal points.

Using such rigidity results and the 
mentioned estimate on the mean flatness via the frequency
we use the covering argument and the discrete Reifenberg theorem
by Naber--Valtorta in \cite{NaVa1,NaVa2} in order to infer 
Theorem~\ref{t:misura}.
Then, Theorem~\ref{t:rect} is obtained by means of the
rectifiability criterion recently established by Azzam--Tolsa 
\cite{AzTo15} and indipendently by Naber--Valtorta in \cite{NaVa1,NaVa2}, 
while Theorem~\ref{t:frequency} is a consequence of Almgren's stratification 
principle (see, \textit{e.g.}, \cite{FMS-15}) and 
the classification of homogeneous solutions
of the PDE \eqref{e:ob-pb local} given in 
\S~\ref{s:blow-up} and \S~\ref{a:appendix}.

\subsection{Structure of the paper}

We start off introducing several preliminaries
in \S~\ref{s:preliminari}.
More precisely, in \S~\ref{s:prelim thin} we
collect the results concerning the
regularity of the solutions to the thin obstacle problem. 
In \S~\ref{s:frequency} we introduce the variant of the 
frequency function we are going to use and we derive  
several useful properties.
We then show in \S~\ref{s:frequency estimate} how to 
deduce from these an oscillation estimate of the frequency.
The aforementioned control of the flatness of the free 
boundary (defined in terms of the Peter
Jones' numbers),
with the oscillation of the frequency is established in 
Proposition~\ref{p:mean-flatness vs freq}.
Next, \S~\ref{s:rigidity} is devoted to the classification results 
for homogeneous solutions to the PDE in 
\eqref{e:ob-pb local} under several conditions
and to study the rigidity properties of almost homogeneous solutions.
Full proofs of the classification are provided in the 
Appendix~\ref{a:appendix}. 

We then proceed with the proofs of 
Theorems~\ref{t:misura}, \ref{t:rect} and 
\ref{t:frequency} in \S~\ref{s:misura}, 
\S~\ref{s:rect} and \S~\ref{s:blow-up}, respectively. 
In the corresponding section we recall the
analytical results 
that we exploit in the proofs, namely the 
discrete Reifenberg theorem by Naber--Valtorta \cite{NaVa1, NaVa2},
the rectifiability criterion by Azzam--Tolsa \cite{AzTo15}, 
and Almgren's stratification principle 
following the abstract version provided  in
our paper in collaboration with Marchese \cite{FMS-15}.

%
%

\section{Preliminaries on the thin obstacle problem}\label{s:preliminari}
In this section we recall some of the known results on the 
thin obstacle problem.

\subsection{Optimal regularity}\label{s:prelim thin}
The following is the main existence and regularity theorem
by Caffarelli, Salsa and Silvestre \cite{CaSaSi08}.

\begin{theorem}\label{t:reg}
For every $g\in \mathscr{A}_{1}$, there exists a unique
solution $u$ to the thin obstacle problem \eqref{e:ob-pb local}
in $B_1$.
Moreover, $\de_{x_i} u\in C^s(B_1)$ for $i=1,\ldots, n$,
$|x_{n+1}|^a\de_{x_{n+1}} u \in C^\alpha(B_1)$, $0<\alpha<1-s$,
and there exists a constant $C_{\ref{t:reg}}>0$ such that
\begin{equation}\label{e:C1_1/2 reg}
\|\nabla_\tau u\|_{C^{s}(B_{\sfrac{1}{2}})}+ 
\|\sgn(x_{n+1})\,|x_{n+1}|^a\de_{x_{n+1}} u\|_{C^{\alpha}(B_{\sfrac{1}{2}})}
\leq C_{\ref{t:reg}}\, \|u\|_{L^2(B_1, |x_{n+1}|^a\cL^{n+1})},
\end{equation}
where $\nabla_\tau u = (\de_{x_1}u, \ldots, \de_{x_n}u)$
is the horizontal gradient.
\end{theorem}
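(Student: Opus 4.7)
The plan has two parts: existence and uniqueness by the direct method of the calculus of variations, and the optimal regularity estimate through an Almgren-type frequency formula combined with blow-up analysis. For the first part, I would work in $H^1(B_1,|x_{n+1}|^a\cL^{n+1})$, which behaves like a standard Sobolev space because $|x_{n+1}|^a$ is a Muckenhoupt $A_2$-weight for $a\in(-1,1)$. The admissible class $\mathscr{A}_{1,g}$ is convex and weakly closed, and the Dirichlet energy is strictly convex, coercive (via the weighted Poincar\'e inequality), and weakly lower semicontinuous, so a unique minimizer $u$ exists. The full Euler--Lagrange system \eqref{e:ob-pb local} is then extracted by testing with one-sided smooth perturbations at the thin plane, which yield the variational inequality encoding $u(\cdot,0)\geq 0$, the distributional sign condition on $\div(|x_{n+1}|^a\nabla u)$, and the conormal balance outside the coincidence set; the even symmetry is preserved because both the datum and the functional are even in $x_{n+1}$.

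For the regularity part, away from $\Lambda(u)$ the equation reduces to a pure degenerate elliptic equation with $A_2$-weight, whose solutions are locally H\"older continuous and smooth in $\{x_{n+1}>0\}$ by Fabes--Kenig--Serapioni and standard bootstrapping. A preliminary Lipschitz bound on $B_{1/2}$ can be obtained by comparison with explicit barriers together with tangential semiconvexity $\de_{ee}u\geq -C$ for $e\in\R^n\times\{0\}$, which follows by passing to the limit in the second-order incremental quotients along $e$ (translations in tangential directions preserve the admissible class). The core of the argument is then monotonicity of Almgren's frequency
\[
N(x_0,r):=\frac{r\int_{B_r(x_0)}|\nabla u|^2|x_{n+1}|^a\,\d x}{\int_{\de B_r(x_0)}u^2|x_{n+1}|^a\,\d\cH^n}
\]
at every free boundary point $x_0\in\Gamma(u)$, established via the weighted Rellich--Ne\v cas identity; the obstacle term $u\,\de_{x_{n+1}}u$ on $\Lambda(u)$ enters with a favorable sign, and monotonicity of $N$ yields compactness and homogeneity of every blow-up $\bar u_{x_0,r}$.

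It then remains to classify the minimal admissible value of the frequency. A separation of variables on $\s^n$ for the degenerate spherical operator associated to the equation, under the Signorini-type boundary conditions on the equator (nonnegative trace, nonpositive conormal on the contact set), excludes all exponents in $(1,1+s)$ and identifies $1+s$ as the least admissible value, realized by $\textrm{Re}((x\cdot e+ix_{n+1})^{1+s})$ with $e\in\R^n\times\{0\}$. The lower bound $N(x_0,\cdot)\geq 1+s$ then yields, via the usual growth--monotonicity argument for $H(r)=\int_{\de B_r(x_0)}u^2|x_{n+1}|^a\,\d\cH^n$, the pointwise estimate $|u(x)|\leq C\,|x-x_0|^{1+s}$ uniform in $x_0\in\Gamma(u)$. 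Combined with the tangential semiconvexity and interior Schauder-type estimates for the degenerate equation, this upgrades to $\nabla_\tau u\in C^s(B_{1/2})$, while the conormal $\sgn(x_{n+1})|x_{n+1}|^a\de_{x_{n+1}}u$---continuous across $\Lambda(u)$ with vanishing trace there, and locally H\"older on the complement by the $A_2$-weighted interior theory---inherits $C^\alpha$ regularity on $B_{1/2}$ for every $\alpha\in(0,1-s)$. The main obstacle is precisely the frequency classification step: ruling out frequencies in $(1,1+s)$ requires a delicate spectral analysis of the mixed eigenvalue problem on $\s^n$ under one-sided boundary conditions, and this is where the sharp exponent $s=(1-a)/2$ is forced, rigidly, into the regularity bound.
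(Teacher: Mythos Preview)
Your outline is essentially the Caffarelli--Salsa--Silvestre argument from \cite{CaSaSi08}, which is precisely what the paper \emph{cites} rather than proves: Theorem~\ref{t:reg} is recorded as a preliminary result from the literature, with no proof given in the paper. The only addition the paper makes is Remark~\ref{r:reg}, which explains how to replace the $C^0$-norm on the right-hand side (as stated in \cite{CaSaSi08}) by the weighted $L^2$-norm: one observes that $u^\pm$ are subsolutions of $\div(|x_{n+1}|^a\nabla\,\cdot)=0$ in $\mathscr{D}'(B_1)$, so the Fabes--Kenig--Serapioni $L^\infty$-estimate for $A_2$-weighted equations gives $\|u\|_{C^0(B_{3/4})}\leq C\|u\|_{L^2(B_1,|x_{n+1}|^a\cL^{n+1})}$, and then one feeds this into the estimate of \cite[Proposition~4.3]{CaSaSi08}.

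One small slip in your sketch: you say the conormal $\sgn(x_{n+1})|x_{n+1}|^a\de_{x_{n+1}}u$ has ``vanishing trace'' on $\Lambda(u)$. This is not correct---the weighted normal derivative satisfies $\lim_{x_{n+1}\downarrow 0}x_{n+1}^a\de_{x_{n+1}}u\leq 0$ on $\Lambda(u)$ and vanishes only on the \emph{complement} $\{u(\cdot,0)>0\}$ (hence on $\Gamma(u)$ by continuity). This does not undermine your strategy, since the $C^\alpha$ bound for the conormal in \cite{CaSaSi08} does not rely on zero Dirichlet data on $\Lambda(u)$; but it is worth stating correctly.
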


\begin{remark}\label{r:reg}
The estimates in \cite[Proposition~4.3]{CaSaSi08} are given
in terms of the $C^0$ norm of $u$
on the right hand side of the inequality.
Nevertheless, $u^{+}:=\max\{u,0\}$
and $u^{-}:=\max\{-u,0\}$ satisfy
$\div(|x_{n+1}|^a\nabla u^{\pm}(x)) \geq 0$ in $\mathcal{D}'(B_1')$.
Therefore, by the $L^\infty$-estimate in
\cite[Theorem~2.3.1]{FaKeSe82} we have that
\begin{align}\label{e:rmk reg}
\|u^{+}\|_{C^0(B_{\sfrac34})} +
\|u^{-}\|_{C^0(B_{\sfrac34})} \leq C\,\|u\|_{L^2(B_1, |x_{n+1}|^a\cL^{n+1})},
\end{align}
and then \eqref{e:C1_1/2 reg} follows by combining
\cite[Proposition~4.3]{CaSaSi08} and \eqref{e:rmk reg}.
\end{remark}

\begin{remark}\label{r:reg2}
For later purposes we also need the estimate
\begin{align}\label{e: stima reg cacona}
\sup_{x \in B_{\rho}} |\nabla u(x)\cdot x| \leq 
C\,\rho^{-\frac{n+1+a}{2}}\|u\|_{L^2(B_{2\rho}, |x_{n+1}|^a\cL^{n+1})},
\end{align}
which follows straightforwardly from \eqref{e:C1_1/2 reg}.
\end{remark}

In particular, the function $u$ is analytic in $\big\{x_{n+1} >0\big\} \cap B_1$
(see, \eg, \cite{Hopf32}) and
the following boundary conditions holds.

\begin{corollary}\label{c:bd conditions}
Let $u$ be a solution to the thin obstacle problem 
\eqref{e:ob-pb local} in $B_1$. Then, 
\begin{align}
\lim_{x_{n+1}\downarrow 0^+} x_{n+1}^a \de_{n+1} u(x',x_{n+1}) = 0
& \quad \text{for }\; (x',0) \in B_1'\;\text{with }\;
u(x',0) > 0,\label{e:bd condition1}\\
\lim_{x_{n+1}\downarrow 0^+} x_{n+1}^a \de_{n+1} u(x',x_{n+1}) \leq 0
& \quad \text{for }\; (x',0) \in B_1',\label{e:bd condition2}\\
u(x)\,\div(|x_{n+1}|^a\nabla u(x)) = 0 & \quad \text{in }\;\mathscr{D}'(B_1).\label{e:bd distribution}
\end{align}
\end{corollary}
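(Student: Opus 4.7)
The plan is to reduce all three claims to a single integration-by-parts identity on $B_1^+$ and then to read off (i), (ii), and (iii) from its interplay with the three PDE conditions in \eqref{e:ob-pb local}.

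First I would introduce the weighted co-normal
\[
F(x) := \sgn(x_{n+1})\,|x_{n+1}|^a\,\de_{n+1} u(x),
\]
which by Theorem~\ref{t:reg} extends to a $C^\alpha$ function on $B_{1/2}$. The even symmetry of $u$ in $x_{n+1}$ forces $\de_{n+1}u$ to be odd, whence $F$ is even; in particular the one-sided limit $\lim_{x_{n+1}\downarrow 0^+} x_{n+1}^a \de_{n+1} u(x',x_{n+1})$ exists and coincides with the trace $F(x',0)$. Since $u$ is analytic in $B_1^+$ and the weight $x_{n+1}^a$ is smooth and strictly positive there, the pointwise identity $\div(x_{n+1}^a \nabla u)=0$ holds in $\{x_{n+1}>0\}$. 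Integrating by parts on $B_1^+$ against any $\varphi\in C_c^\infty(B_1)$ and doubling the contribution via the even symmetry of $u$ should then yield the master identity
\begin{equation}\label{e:plan-iid}
\int_{B_1} |x_{n+1}|^a\,\nabla u\cdot\nabla\varphi\,\d x \;=\; -2\int_{B_1'} F(x',0)\,\varphi(x',0)\,\d x'.
\end{equation}

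Given \eqref{e:plan-iid}, the three conclusions fall out quickly. For (ii), the fourth line of \eqref{e:ob-pb local} says the left-hand side of \eqref{e:plan-iid} is nonnegative for every $\varphi\geq 0$, and the continuity of $F(\cdot,0)$ then forces $F(x',0)\leq 0$ throughout $B_{1/2}'$. For (i), if $u(x_0',0)>0$ then continuity of the trace $u(\cdot,0)$ (a consequence of Theorem~\ref{t:reg}) supplies a cylinder $B_\rho'(x_0')\times(-\rho,\rho)\subset B_1$ where the coincidence set is empty; the third line of \eqref{e:ob-pb local} makes the left-hand side of \eqref{e:plan-iid} vanish for every $\varphi$ supported in that cylinder, and the fundamental lemma of the calculus of variations forces $F(\cdot,0)\equiv 0$ on $B_\rho'(x_0')$. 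For (iii), the identity \eqref{e:plan-iid} identifies the distribution $-\div(|x_{n+1}|^a\nabla u)$ with the nonnegative Radon measure $-2F(\cdot,0)\,\cH^n\res B_1'$, which by (i) is concentrated on $\Lambda(u)$; since $u$ is Lipschitz by Theorem~\ref{t:reg} and vanishes on $\Lambda(u)$, the product $u\,\div(|x_{n+1}|^a\nabla u)$ is a well-defined Radon measure and is identically zero.

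The one point requiring genuine care is the integration by parts producing \eqref{e:plan-iid}: one needs $x_{n+1}^a \de_{n+1} u$ to admit a bona fide $L^1$ trace on $B_1'$ and $|x_{n+1}|^a|\nabla u|$ to be locally integrable across the thin hyperplane. Both are furnished by Theorem~\ref{t:reg} together with the standing assumption $a\in(-1,1)$, so no further free-boundary analysis is needed: once \eqref{e:plan-iid} is in hand, the corollary is an essentially algebraic reading of the complementarity conditions in \eqref{e:ob-pb local}.
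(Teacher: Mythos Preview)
Your proposal is correct and follows essentially the same route as the paper: both arguments hinge on the single integration-by-parts identity identifying $\div(|x_{n+1}|^a\nabla u)$ with the measure $2F(\cdot,0)\,\cH^n\res B_1'$ (the paper carries this out via an explicit $\{x_{n+1}\geq\varepsilon\}$ truncation, which is the justification you allude to), and then read off \eqref{e:bd condition1}--\eqref{e:bd distribution} from the variational inequalities in \eqref{e:ob-pb local}. The only cosmetic point is that Theorem~\ref{t:reg} literally gives $F\in C^\alpha(\overline{B_{1/2}})$, so your conclusions on $B_{1/2}'$ extend to all of $B_1'$ by interior localization, as the paper implicitly assumes.
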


\begin{proof}
Set for simplicity
$f(x',0):= \lim_{x_{n+1}\downarrow 0^+} x_{n+1}^a \de_{n+1} u(x',x_{n+1})$,
and note that by Theorem~\ref{t:reg} we have that $f \in C^\alpha(B_1')$, $0<\alpha<1-s$.
By the even symmetry of $u$, for every $\ph \in C^1_c(B_1)$
even symmetric we get the following: let $T:=-\div(|x_{n+1}|^a\nabla u(x))$ in $\mathscr{D}'(B_1)$, then
\begin{align*}
T(\ph(x))
& = 2 \int_{B_1^+} \nabla u(x)\cdot \nabla \ph(x)\,|x_{n+1}|^a\,\d x\\
&= 2 \lim_{\epsilon\downarrow0}
\int_{\{x_{n+1}\geq \epsilon\}\cap B_1}
\nabla u(x) \cdot \nabla \ph(x)\,|x_{n+1}|^a\,\d x\\
& \stackrel{\eqref{e:ob-pb local}}{=}
- 2 \lim_{\epsilon\downarrow0} \int_{\{x_{n+1}= \epsilon\} \cap
B_1} \de_{n+1}u (x',\epsilon)\,\ph(x',\epsilon)\,\epsilon^a\d x'\\
& = -2\int_{B_1'} f(x',0)\, \ph(x',0)\,\d x'.
\end{align*}
This shows that
\[
\div(|x_{n+1}|^a\nabla u(x)) = -2\, f(x',0)\, \cH^{n}\res B_{1}'.
\]
Thus, \eqref{e:bd condition1} and \eqref{e:bd condition2} 
follow directly from \eqref{e:ob-pb local}.
Moreover, $u(x)\,\div(|x_{n+1}|^a\nabla u(x))$ is well-defined
as a measure, and using \eqref{e:ob-pb local} we also infer 
\eqref{e:bd distribution}, because
$f(x',0)\,u(x', 0) = 0$ for all $(x',0) \in B_1'$.
\end{proof}

%
%

\subsection{The frequency function}\label{s:frequency}
As firstly noticed by Athanasopoulos, Caffarelli and 
Salsa in \cite{AtCaSa08}, one of the main 
quantities which are relevant to the analysis
of the solutions to the thin obstacle problem is
\textit{Almgren's frequency function}.
Several variants of the frequency function have
been introduced in the literature. For our purposes, we use
the analog of that introduced in \cite{DS2}
in the context of higher co-dimension minimal surfaces.

Let $\phi:[0,+\infty) \to [0,+\infty)$ be the function given by
\[
\phi(t) := 
\begin{cases}
1 & \text{for } \; 0\leq t \leq \frac{1}{2},\\
2\,(1-t) & \text{for } \; \frac{1}{2} < t \leq 1,\\
0 & \text{for } \; 1 < t .\\
\end{cases}
\]
We define the frequency of
a solution $u$ to \eqref{e:ob-pb local}
at a point $x_0 \in B_R'$ by
\[
I_u(x_0,r) := \frac{r D_u(x_0,r)}{H_u(x_0,r)}
\quad  \forall\; r<R-|x_0|,
\]
where
\[
D_u(x_0,r) := \int \phi\big(\textstyle{\frac{|x-x_0|}{r}}\big)\,|\nabla u(x)|^2\,
|x_{n+1}|^a\d x,
\]
and
\[
H_u(x_0,r) := - \int \phi'\Big(\textstyle{\frac{|x-x_0|}{r}}\Big)
\,\frac{u^2(x)}{|x-x_0|}\,|x_{n+1}|^a
\,\d x.
\]
Note that the frequency is well-defined as long as $H_u(x_0,r)>0$.
As $H_u(x_0,r) = 0$ implies $u\equiv 0$
by the analyticity of $u$ in $B_R\setminus B_R'$, 
we infer then that the frequency is always well-defined
for non-trivial solutions $u$.
For later convenience, we introduce also the notation
\[
E_u(x_0,r):= \int -\phi'\Big(\textstyle{\frac{|x-x_0|}{r}}\Big)
\frac{|x-x_0|}{r^2}
\Big(\nabla u(x) \cdot
\frac{x-x_0}{|x-x_0|}\Big)^2\,|x_{n+1}|^a\, \d x.
\]
In what follows, when $x_0 = 0$ we shall omit to write the base point 
$x_0$ in the notation of $I_u$, $D_u$, $H_u$ an $E_u$. 

\begin{remark}
The principal advantage of the frequency function $I_u(x_0,r)$ is that
it retains some average information of the solution $u$
on the annulus $B_{r}\setminus B_{\sfrac{r}{2}}(x_0)$,
whereas the classical Almgren's frequency function only
involves the $L^2$ norm of $u$ on the sphere $\de B_{r}(x_0)$.
\end{remark}

\begin{remark}\label{r:scaling}
If $u$ is a solution to the thin obstacle problem in $B_R$, then
for every $r \in (0,R-|x_0|)$, $x_0\in B_R'$ and for every $c>0$,
the function $v:B_1 \to \R$
\[
v(y) := c\,u(x_0 +r\,y) 
\]
solves \eqref{e:ob-pb local} in $B_1$ with respect to its own boundary conditions.
Moreover, $I_v(0,\rho) = I_u(x_0,\rho\,r)$ for every $\rho \in (0,1)$.
This shows that the frequency function is 
scaling invariant, and in the sequel we will use
this property repeatedly. 
\end{remark}

\subsection{Monotonicity of the frequency}
The following is a simple variant of the well-known 
monotonicity of the frequency (cf.~\cite{CaSi07}).

\begin{proposition}\label{p:monotonia+lower}
Let $u$ be a solution to the thin obstacle problem \eqref{e:ob-pb local} in $B_R$.
Then, for all $x_0 \in \Gamma(u)\cap B_R'$,
the map $(0,R-|x_0|)\ni r\mapsto I_u(x_0,r)$ is nondecreasing
and
\begin{equation}\label{e:monotonia freq}
I_u(x_0,r_1) - I_u(x_0,r_0)  =
\int_{r_0}^{r_1}\frac{2\,t}{H_u^2(x_0,t)} \Big(H_u(x_0,t) \; E_u(x_0,t) - D_u^2(x_0,t) \Big)
\,\d t
\end{equation} 
for $0 < r_0 < r_1 < R - |x_0|$.
Moreover, $I_u(x_0,\cdot)=\kappa$
for every $t\in (r_0,r_1)$ if and only if $u$ is 
$\kappa$-homogeneous with respect to $x_0$.
\end{proposition}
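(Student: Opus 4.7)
The plan is to prove the monotonicity formula by computing $I_u'(r)$ directly and showing $I_u'(r) \geq 0$ via a Cauchy--Schwarz argument, in analogy with the classical Almgren monotonicity but adapted to the smoothed cutoff $\phi$ and the weighted setting. Without loss of generality we may assume $x_0=0$ by translation, appealing to Remark~\ref{r:scaling} to preserve the problem. The proof then hinges on establishing two differential identities for $H_u(r)$ and $D_u(r)$.

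For $D_u(r)$, the first step is to test the equation against $\phi(|x|/r)\,u(x)$; this is admissible thanks to the distributional identity \eqref{e:bd distribution}, and integration by parts together with $\nabla\phi(|x|/r)=\phi'(|x|/r)\,x/(r|x|)$ yields
\[
D_u(r) \;=\; -\frac{1}{r}\int \phi'\!\left(\tfrac{|x|}{r}\right)\frac{x\cdot\nabla u}{|x|}\,u\,|x_{n+1}|^a\,\d x.
\]
For $H_u(r)$, passing to polar coordinates with $h(s):=\int_{\partial B_s}u^2|x_{n+1}|^a\,\d\cH^n$, the change of variables $s=rt$ gives $H_u(r)=-\int_0^\infty\phi'(t)h(rt)/t\,\d t$; differentiating in $r$ and applying the standard weighted sphere-derivative formula $h'(s)=\frac{n+a}{s}h(s)+2\int_{\partial B_s}u\,\partial_\nu u\,|x_{n+1}|^a\,\d\cH^n$ then combines with the above representation of $D_u$ to give
\[
H_u'(r)\;=\;\frac{n+a}{r}\,H_u(r)\;+\;2\,D_u(r).
\]
For $D_u'(r)$, I would perform the inner variation of the weighted Dirichlet energy along the tangential vector field $X(x)=\phi(|x|/r)\,x$, which is admissible because $X_{n+1}=0$ on $\{x_{n+1}=0\}$, so that the thin obstacle is preserved. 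A routine computation of $\operatorname{div} X$, $DX$, and $X\cdot\nabla(|x_{n+1}|^a)$ produces
\[
rD_u'(r)\;=\;(n-1+a)\,D_u(r)\;+\;2r\,E_u(r).
\]

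Combining the two identities and simplifying gives $I_u'(r)=2r\bigl(H_u(r)E_u(r)-D_u^{2}(r)\bigr)/H_u^{2}(r)$, from which \eqref{e:monotonia freq} follows by integration. Monotonicity is then immediate from the Cauchy--Schwarz inequality applied to the positive measure $\d\nu_r:=-\phi'(|x|/r)|x_{n+1}|^a|x|^{-1}\,\d x$, which gives $D_u^{2}(r)\le H_u(r)E_u(r)$. For the rigidity statement, if $I_u(\cdot)\equiv\kappa$ on $(r_0,r_1)$ then $I_u'\equiv 0$, so the Cauchy--Schwarz equality case forces $x\cdot\nabla u(x)=\lambda(r)\,u(x)$ on the shell $\{r/2<|x|<r\}$ for a.e.~such $r$; the overlapping of these shells as $r$ varies, together with the analyticity of $u$ off the coincidence set, forces $\lambda(r)$ to be a constant $\lambda$, and evaluating $I_u$ on a $\lambda$-homogeneous function (where $D_u(r)=\lambda H_u(r)/r$ by the same integration by parts) identifies $\lambda=\kappa$. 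The converse direction is a direct substitution.

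The main technical obstacle will be in the Pohozaev-type derivation of $rD_u'(r)=(n-1+a)D_u(r)+2rE_u(r)$: one must rigorously justify the inner variation formula in the weighted setting with thin obstacle, exploiting that $X=\phi(|x|/r)\,x$ is tangential to $\{x_{n+1}=0\}$ so as to remove the boundary contribution, the even symmetry of $u$, and the identity \eqref{e:bd distribution} to handle the singular set where $u=0$.
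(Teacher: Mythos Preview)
Your proposal is correct and follows essentially the same approach as the paper. The paper likewise derives the identities $H_u'(r)=\frac{n+a}{r}H_u(r)+2D_u(r)$ and $D_u'(r)=\frac{n+a-1}{r}D_u(r)+2E_u(r)$, obtaining the first by differentiating after a change of variables (rather than via the sphere-derivative formula for $h(s)$) and the second by applying the divergence theorem to the Pohozaev-type vector field $W(x)=\big(\tfrac{|\nabla u|^2}{2}x-(\nabla u\cdot x)\nabla u\big)\phi(|x|/t)|x_{n+1}|^a$, which is exactly the stress-tensor computation underlying your inner variation along $X(x)=\phi(|x|/r)\,x$; the rigidity argument and the Cauchy--Schwarz step are also the same.
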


\begin{proof}
We start off collecting some useful identities:
\begin{gather}
D_u(x_0,t)=-\frac1t\int \phi'\big(\textstyle{\frac{|x-x_0|}{t}}\big)\,u(x)
\nabla u(x)\cdot\frac{x-x_0}{|x-x_0|}\,|x_{n+1}|^a\,\d x,\label{e:D}\\
H_u'(x_0,t)
:=\frac{\d}{\d t}H_u(x_0,t)
=\frac{n+a}{t}\,H_u(x_0,t)+2\,D_u(x_0,t),
\label{e:Hprime} \\
D_u'(x_0,t):=\frac{\d}{\d t}D_u(x_0,t)=\frac{n+a-1}{t} \, 
D_u(x_0,t)+2\,E_u(x_0,t).\label{e:Dprime}
\end{gather}
To show \eqref{e:D}, \eqref{e:Hprime} and \eqref{e:Dprime},
we assume without loss of generality that $x_0 = 0$.
For \eqref{e:D} we consider the vector field
$V(x) := \phi\big(\frac{|x|}{t}\big)\,u(x)\,\nabla u(x)\,|x_{n+1}|^a$.
Clearly $V$ has compact support, and $V \in C^{\infty}(\R^{n+1}\setminus B_1',\R^n)$
by Theorem~\ref{t:reg}. Moreover, for $x_{n+1}\neq 0$
\[
V(x)\cdot e_{n+1}=\phi\big(\frac{|x|}{t}\big)\,{u_{x_0}}(x)\,
\frac{\partial u_{x_0}}{\partial x_{n+1}}(x)\,|x_{n+1}|^a\,,
\]
thus, $\lim_{y\downarrow (x',0)}V(y)\cdot e_{n+1}=0$. Indeed, 
if $(x',0)\in\Lambda_\varphi(u)$ it suffices to take into account the one-sided $C^{1,\alpha}$
regularity of $u$ in Theorem~\ref{t:reg} to conclude
\[
\lim_{y\downarrow (x',0)}u(y)|y_{n+1}|^a=0.
\]
Instead, if $(x',0)\notin\Lambda_\varphi(u)$ we use \eqref{e:bd condition1} in 
Corollary~\ref{c:bd conditions}. 
Thus, the distributional divergence of $V$ is the $L^1$ function given by
\begin{align*}
\div\, V (x) & = \phi\big(\textstyle{\frac{|x|}{t}}\big)|\nabla u(x)|^2\,|x_{n+1}|^a
+ \phi'\big(\textstyle{\frac{|x|}{t}}\big)u(x)
\nabla u(x)\cdot\frac{x}{t\,|x|}\,|x_{n+1}|^a.
\end{align*}
Therefore, \eqref{e:D} follows from the divergence theorem by taking into 
account that $V$ is compactly supported.

Next \eqref{e:Hprime} is a consequence of \eqref{e:D} 
and the direct computation
\begin{align*}
H_u'(t) &= \frac{\d}{\d t} \left(-t^{n+a} \int \phi'(|y|)\, \frac{u^2(t\,y)}{|y|}\,|y_{n+1}|^a \,\d y\right)\\
& = \frac{n+a}{t}\,H_u(t) - 2\,t^{n+a} \int \phi'(|y|)\, \frac{u(t\,y)\,\nabla
u(t\,y)\cdot y}{|y|}\,|y_{n+1}|^a \,\d y\\
& \stackrel{\eqref{e:D}}{=} \frac{n+a}{t}\,H_u(t) + 2\, D_u(t).
\end{align*}
Finally, to prove \eqref{e:Dprime} we consider the vector field
\[
W(x) = \left(\frac{|\nabla u|^2}{2}x-(\nabla u\cdot x)\nabla u\right)
\phi\Big(\textstyle{\frac{|x|}{t}}\Big)|x_{n+1}|^a.
\]
By Theorem~\ref{t:reg} we have that $W \in C^0_c(B_1,\R^n) \cap 
C^\infty(B_1\setminus B_1',\R^n)$.
Moreover, Corollary~\ref{c:bd conditions} implies that
$W (x',0) \cdot e_{n+1} = 0$ for all $(x',0) \in B_1'$.
Thus $\div\,W$ has no singular part in $B_1'$, and we 
can compute pointwise
\begin{align*}
\div\, W (x) & =  \phi'\big(\textstyle{\frac{|x|}{t}}\big)\cdot
\,\frac{x}{t\,|x|} \Big(\frac{|\nabla u|^2}{2}x-(\nabla u\cdot x)\nabla u\Big)
|x_{n+1}|^a + \phi\big(\textstyle{\frac{|x|}{t}}\big)\,\frac{n+a-1}{2}\,|\nabla 
 u(x)|^2|x_{n+1}|^a.
\end{align*}
Therefore, we infer that
\[
0=\int\div\,W(x)\,\d x =\int \phi'\Big(\textstyle{\frac{|x|}{t}}\Big)
\frac{|x|}{2\,t}\,|\nabla u(x)|^2|x_{n+1}|^a\,\d x +
t\,E_u(t) + \frac{n+a-1}{2}\,D_u(t),
\]
and we conclude \eqref{e:Dprime} by direct differentiation
\[
D_u'(t)=-\int \phi'\Big(\textstyle{\frac{|x|}{t}}\Big)
\frac{|x|}{t^2}\,|\nabla u(x)|^2|x_{n+1}|^a\,\d x.
\]

By collecting \eqref{e:Hprime} and \eqref{e:Dprime}, we finally compute
the derivative of $\log I_u(t)$: 
\begin{equation*}
\frac{I_u'(t)}{I_u(t)}=\frac 1t+\frac{D_u'(t)}{D_u(t)}-\frac{H_u'(t)}{H_u(t)}
 =2\,\frac{E_u(t)}{D_u(t)}-2\frac{D_u(t)}{H_u(t)}.
\end{equation*}
In particular, identity \eqref{e:monotonia freq} follows
at once by multiplying by $I_u(t)$ and by integrating over $(r_0,r_1)$.
In addition, by the Cauchy--Schwarz inequality, 
$r\mapsto I_u(r)$ is
non-decreasing. Finally, 
if $I_u(t) = k$ for every $t\in (r_0,r_1)$, then
\[
H_u(t) \, E_u(t) = D_u^2(t) \quad\forall \,t\in (r_0,r_1).
\]
In particular, by the equality case in the Cauchy--Schwarz
inequality, we deduce that there exists a constant $\lambda\in\R$ such
that
\[
\nabla u (x) \cdot x = \lambda\,u(x) \quad \forall\;x\in B_{r_1}\setminus 
B_{\sfrac{r_0}2},
\]
{\ie}~$u(x) = |x|^\lambda u(\sfrac{x}{|x|})$  for all 
$x\in B_{r_1}\setminus B_{\sfrac{r_0}2}$.
It then follows that $\lambda = k$
and by analyticity we conclude that $u$ is $k$-homogeneous in the whole $B_R$.
\end{proof}

From the monotonicity of the frequency, we infer the following consequences.

\begin{corollary}\label{c:H}
Let $u$ be a solution to the thin obstacle problem \eqref{e:ob-pb local} in $B_R$.
Then, for all $x_0 \in \Gamma(u) \cap B_R'$ and $0 < r_0 < r_1 < R - |x_0|$, we have
\begin{equation}\label{e:monotonia H}
\frac{H_u(x_0,r_1)}{r_1^{n+a}} = \frac{H_u(x_0,r_0)}{r_0^{n+a}}\,
e^{2\int_{r_0}^{r_1} \frac{I_u(x_0,t)}{t} \d t}.
\end{equation}
In particular, if $A_1 \leq I(x_0,t) \leq A_2$ 
for every $t \in (r_0, r_1)$, then 
\begin{gather}
(r_0, r_1)\ni r\mapsto\frac{H_u(x_0,r)}{r^{n+a + 2A_2}}
\quad\text{is monotone decreasing},\label{e:H1}\\
(r_0, r_1)\ni r\mapsto\frac{H_u(x_0,r)}{r^{n+a + 2A_1}}
\quad\text{is monotone increasing}.\label{e:H2}
\end{gather}
Moreover,
\begin{align}\label{e:L2 vs H}
\int_{B_r(x_0)}|u|^2|x_{n+1}|^a\,\d x\leq r\,H_u(x_0,r).
\end{align}
\end{corollary}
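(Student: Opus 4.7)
The plan is to deduce all three claims from the single identity
\[
\frac{\d}{\d t} \log\frac{H_u(x_0,t)}{t^{n+a}} = \frac{2\,I_u(x_0,t)}{t},
\]
which follows at once from \eqref{e:Hprime} by dividing through by $H_u$, subtracting $(n+a)/t$, and recognizing $2D_u/H_u = 2I_u/t$. Integrating on $(r_0,r_1)$ and exponentiating yields \eqref{e:monotonia H} directly. For the monotonicity statements \eqref{e:H1} and \eqref{e:H2}, the same computation with the shifted exponent $n+a+2A$ gives
\[
\frac{\d}{\d t} \log\frac{H_u(x_0,t)}{t^{n+a+2A}} = \frac{2(I_u(x_0,t)-A)}{t},
\]
whose right-hand side is non-positive on $(r_0,r_1)$ when $A=A_2 \geq I_u$, and non-negative when $A=A_1 \leq I_u$.

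For \eqref{e:L2 vs H}, my plan is a dyadic annular decomposition. Since $-\phi'(s) = 2\,\chi_{(1/2,1)}(s)$, a direct comparison of the integrands gives
\[
r\,H_u(x_0,r) \geq 2 \int_{B_r(x_0)\setminus B_{r/2}(x_0)} u^2\,|x_{n+1}|^a\,\d x.
\]
Applying this estimate at every dyadic radius $r/2^k$, summing over the disjoint decomposition $B_r(x_0) = \bigsqcup_{k\geq 0} \bigl(B_{r/2^k}(x_0)\setminus B_{r/2^{k+1}}(x_0)\bigr)$, and substituting the monotonicity bound $H_u(x_0, r/2^k) \leq 2^{-k(n+a)} H_u(x_0, r)$ (a consequence of the first identity above together with $I_u \geq 0$) yields
\[
\int_{B_r(x_0)} u^2\,|x_{n+1}|^a\,\d x \leq \frac{r\,H_u(x_0,r)}{2}\sum_{k\geq 0} 2^{-k(n+a+1)} = \frac{r\,H_u(x_0,r)}{2 - 2^{-(n+a)}}.
\]

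The only point to check is that the geometric-series constant $1/(2 - 2^{-(n+a)})$ is at most $1$, equivalently $n+a \geq 0$. Since $n\geq 1$ and $a \in (-1,1)$, we have $n+a > 0$ and the bound is in fact strict, which closes \eqref{e:L2 vs H}. No other real obstacle arises: beyond this dimensional bookkeeping, all three parts of the corollary follow by direct differentiation from the first-order identities for $H_u$ and $D_u$ already established during the proof of Proposition~\ref{p:monotonia+lower}.
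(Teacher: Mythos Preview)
Your proof is correct and follows essentially the same approach as the paper: the identity \eqref{e:monotonia H} and the monotonicities \eqref{e:H1}--\eqref{e:H2} come from differentiating $\log\big(H_u/t^{n+a+2A}\big)$ via \eqref{e:Hprime}, and \eqref{e:L2 vs H} comes from the same dyadic annular decomposition. The only cosmetic difference is that the paper bounds $H_u(x_0,r/2^k)\leq H_u(x_0,r)$ directly (using \eqref{e:H2} with $A_1=0$), which gives the geometric factor $\sum_k 2^{-(k+1)}=1$ without having to compute $1/(2-2^{-(n+a)})$ and then separately check $n+a\geq 0$; both routes ultimately rest on $I_u\geq 0$ and $n+a>0$.
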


\begin{proof}
The proof of \eqref{e:monotonia H}
(and hence of \eqref{e:H1} and \eqref{e:H2})
follows from the differential equation 
\eqref{e:Hprime}.
The proof of \eqref{e:L2 vs H} is now a direct consequence:
\begin{align*}
\int_{B_r(x_0)}|u|^2|x_{n+1}|^a\,\d x
&= \sum_{k\in\N} \int_{B_{\sfrac{r}{2^k}}
\setminus B_{\sfrac{r}{2^{k+1}}}(x_0)}|u|^2|x_{n+1}|^a\,\d x\\
&\leq \sum_{k\in\N} \frac{r}{2^k}\,H_u\big(x_0,\sfrac{r}{2^k}\big)
\leq r\,H_u(x_0,r),
\end{align*}
where in the last inequality we used that
$H_u(x_0,s)\leq H_u(x_0,r)$ for 
$s\leq r$
by \eqref{e:H2}.
\end{proof}

\subsection{Lower bound on the frequency and compactness}
We first show that the frequency of a solution to \eqref{e:ob-pb local} 
 at free boundary points
is bounded from below by a universal constant.

\begin{lemma}\label{l:freq lower}
There exists a dimensional constant $C_{\ref{l:freq lower}}>0$
such that, for every solution $u$ to the thin obstacle problem 
\eqref{e:ob-pb local} in $B_R$ and for every $x_0 \in \Gamma(u)$, 
we have
\begin{equation}\label{e:freq lower}
I_u(x_0,r) \geq C_{\ref{l:freq lower}} \quad \forall\; r\in (0, R-|x_0|).
\end{equation}
\end{lemma}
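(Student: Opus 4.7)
The plan is to argue by contradiction, combining the monotonicity of $H_u$ from Corollary \ref{c:H} with the Lipschitz regularity of Theorem \ref{t:reg}. By the scaling invariance (Remark \ref{r:scaling}) one may assume $x_0=0$ and $R=1$, and since $r\mapsto I_u(0,r)$ is non-decreasing by Proposition \ref{p:monotonia+lower}, it suffices to bound from below the limit $\lambda:=\lim_{r\downarrow 0} I_u(0,r)$. I will show $\lambda\ge 1$, so one may take $C_{\ref{l:freq lower}}=1$.

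First I would observe that $u(0)=0$: indeed $0\in\Gamma(u)\subset\overline{\Lambda(u)}$ and $u$ is continuous by Theorem \ref{t:reg}. Now suppose for contradiction that $\lambda<1$ and fix any $A\in(\lambda,1)$. By the monotonicity of $I_u(0,\cdot)$ and the definition of $\lambda$ there exists $r_0\in(0,1/2]$ such that $I_u(0,r)\le A$ on $(0,r_0]$. Applying \eqref{e:H1} on this interval yields
\[
H_u(0,r)\ \ge\ c_1\, r^{n+a+2A}\qquad\text{for every }r\in(0,r_0],
\]
with $c_1:=H_u(0,r_0)/r_0^{n+a+2A}>0$.

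For the matching upper bound, Theorem \ref{t:reg} gives $u\in\mathrm{Lip}(B_{1/2})$, so setting $L:=\|\nabla u\|_{L^\infty(B_{1/2})}$ one has $|u(x)|\le L|x|$ on $B_{1/2}$. Since $-\phi'=2\,\mathbf{1}_{(1/2,1)}$, a direct computation gives
\[
H_u(0,r)\ =\ 2\int_{B_r\setminus B_{r/2}}\frac{u^2}{|x|}\,|x_{n+1}|^a\,\d x \ \le\ 2L^2\,r\int_{B_r}|x_{n+1}|^a\,\d x\ \le\ c_2(n,a)\,L^2\,r^{n+a+2}
\]
for every $r\le 1/2$. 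Combining the two estimates produces $r^{2(A-1)}\le c_2L^2/c_1$ on $(0,r_0]$, which is absurd as $r\downarrow 0$ because $A-1<0$. Hence $\lambda\ge 1$, and by monotonicity $I_u(0,r)\ge 1$ for every $r\in(0,1)$.

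There is no serious obstacle here: the key point is that the Lipschitz vanishing of $u$ at a free boundary point forces $H_u(0,r)$ to decay at least like $r^{n+a+2}$, whereas a frequency strictly below $1$ would, through \eqref{e:H1}, force $H_u$ to decay strictly slower. A blow-up alternative is also available, normalizing rescalings so that $H_u=1$ and $I_u\to 0$ along a sequence and extracting via the $C^{1,s}$ compactness a $0$-homogeneous (hence constant) limit that must vanish at the origin, contradicting the normalization; however the direct argument above avoids any compactness on balls of increasing radius and gives the sharper value $C_{\ref{l:freq lower}}=1$.
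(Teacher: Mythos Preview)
Your approach differs from the paper's. The paper argues directly via a Poincar\'e-type inequality: after writing (by the co-area formula and an integration by parts)
\[
H_u(x_0,r)=2\int_{r/2}^r\frac{\d t}{t}\int_{\de B_t(x_0)}u^2\,|x_{n+1}|^a\,\d\cH^n,\qquad
D_u(x_0,r)=\frac{2}{r}\int_{r/2}^r\!\int_{B_t(x_0)}|\nabla u|^2\,|x_{n+1}|^a\,\d x\,\d t,
\]
it invokes the weighted trace inequality of \cite[Lemma~2.13]{CaSaSi08},
\[
\frac{1}{t}\int_{\de B_t(x_0)}u^2\,|x_{n+1}|^a\,\d\cH^n\le C\int_{B_t(x_0)}|\nabla u|^2\,|x_{n+1}|^a\,\d x,
\]
valid because $u(x_0)=0$. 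This yields $H_u\le C\,r\,D_u$ and hence $I_u\ge C^{-1}$ in one stroke, without invoking either the monotonicity of $H_u$ from Corollary~\ref{c:H} or the fine regularity of $u$.

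Your contradiction argument is sound in spirit, but contains one inaccuracy: Theorem~\ref{t:reg} only asserts $\de_{x_i}u\in C^s$ for $i\le n$ and $|x_{n+1}|^a\de_{x_{n+1}}u\in C^\alpha$, which does \emph{not} yield $u\in\mathrm{Lip}$ when $a>0$ (the solution $-|x_{n+1}|^{2s}$, with empty free boundary, is only $C^{2s}$). The fix is immediate: those same estimates give $|u(x)|\le C\big(|x'|+|x_{n+1}|^{2s}\big)$ near the origin, whence $H_u(0,r)\le C\,r^{\,n+a+2\min\{1,2s\}}$, and your comparison with \eqref{e:H1} then yields $\lambda\ge\min\{1,2s\}>0$. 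Thus the paper's route is more self-contained, while yours produces an explicit constant; the sharp bound $1+s$ is in any case obtained later in Corollary~\ref{c:minima freq} by the blow-up argument you allude to at the end.
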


\begin{proof}
By the co-area formula for
Lipschitz functions we check 
that 
\begin{align}
H_u(x_0,r) = 2\int_{\frac{r}{2}}^r \frac{dt}{t}
\int_{\de B_t(x_0)} |u(x)|^2\,|x_{n+1}|^a\,\d \cH^{n}(x),
\label{e:formule integrate2}
\end{align}
and
\begin{align*}
D_u(x_0,r) & = \int_0^{\frac r2}dt
\int_{\partial B_t(x_0)}|\nabla u(x)|^2|x_{n+1}|^a\,\d \cH^{n}(x)\\
&\quad +\frac 2r\int_{\frac r2}^{r}
\int_{\partial B_t(x_0)}(r-t)|\nabla u(x)|^2\,|x_{n+1}|^a\,\d 
\cH^{n}(x).
\end{align*}
An integration by parts then gives
\begin{align}
D_u(x_0,r) = \frac{2}{r} \int_{\frac{r}{2}}^r dt\int_{B_t(x_0)} |\nabla u(x)|^2\,
|x_{n+1}|^a\,\d x.\label{e:formule integrate1}
\end{align}
Therefore, we can conclude 
the lower bound \eqref{e:freq lower} by using the
Poincar\'e inequality in \cite[Lemma~2.13]{CaSaSi08}
\[
\frac{1}{t}
\int_{\de B_t(x_0)} |u(x)|^2\,|x_{n+1}|^a\,\d \cH^{n}(x)
\leq C\, \int_{B_t(x_0)} |\nabla u(x)|^2\,
|x_{n+1}|^a\,\d x.\qedhere
\]
\end{proof}

We can then give the following compactness 
result which will be instrumental for the analysis we 
develop. To this aim it is mandatory to introduce 
the nodal set of $u$:
\[
\mathcal{N}(u) := \Big\{
(x',0)\in B_R'\,:\,u(x',0) = |\nabla_\tau u(x',0)|
= \lim_{t\downarrow 0^+}t^a\de_{n+1}u(x',t) = 0
\Big\}.
\]
Notice that $\Gamma(u) \subseteq \mathcal{N}(u)$
by Corollary~\ref{c:bd conditions}.

\begin{corollary}\label{c:compactness}
Let $(u_k)_{k\in\N}$ be a sequence of solutions
to the thin obstacle problem \eqref{e:ob-pb local}
in $B_1$, with 
$\sup_{k} I_{u_k}(1) <+\infty$, $H_{u_k}(1)\leq 1$
and $0 \in \Gamma(u_k)$ for every $k \in \N$.
Then, there exist a subsequence $(u_{k_j})_{j\in\N} \subset
(u_{k})_{k\in\N}$ and a solution $u_0$ to the thin obstacle
problem in $B_1$ such that as $j\to \infty$
\begin{gather}
u_{k_j} \to u_{0} \quad \text{in 
$H^1_{\loc}(B_1, |x_{n+1}|^a\,\cL^{n+1})$,}
\label{e:cpt1}\\
\nabla_\tau u_{k_j} \to \nabla_\tau u_{0} \quad 
\text{in $C^{\alpha}_{\loc}(B_1)$, $\forall\;\alpha <s$}, 
\label{e:cpt2}\\
\sgn(x_{n+1})\,|x_{n+1}|^a\de_{x_{n+1}} u_{k_j} \to 
\sgn(x_{n+1})\,|x_{n+1}|^a\de_{x_{n+1}} u_{0} 
\text{ in $C^{\alpha}_{\loc}(B_1)$,
$\forall\;\alpha <1-s$,}
\label{e:cpt3}\\
u_{k_j} \to u_{0} \quad \text{in  
$C^{\alpha}_{\loc}(B_1)$, $\forall\;\alpha <\min\{1,2 s\}$.} 
\label{e:cpt4}
\end{gather}
Moreover, if there is a sequence of points $x_{k_j}\in \Gamma(u_{k_j})$
such that $x_{k_j}\to x_0\in B_1$, then
\begin{gather}
x_0\in\mathcal{N}(u_0).\label{e:cpt5}
\end{gather}
\end{corollary}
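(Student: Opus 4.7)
My starting point is to turn the frequency data into uniform integral bounds. Since $0\in\Gamma(u_k)$, Lemma~\ref{l:freq lower} gives $I_{u_k}(r)\geq C_{\ref{l:freq lower}}=:C_0$ for $r\in(0,1)$, while by monotonicity (Proposition~\ref{p:monotonia+lower}) $I_{u_k}(r)\leq\sup_k I_{u_k}(1)=:\Lambda$. Plugging the lower bound into \eqref{e:H2} together with $H_{u_k}(1)\leq 1$ yields the uniform decay $H_{u_k}(r)\leq r^{n+a+2C_0}$ for $r\in(0,1]$, which through \eqref{e:L2 vs H} controls the weighted $L^2$ norms of the $u_k$ on all balls around the origin. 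The upper bound on the frequency gives $D_{u_k}(r)=I_{u_k}(r)H_{u_k}(r)/r\leq\Lambda\,r^{n+a+2C_0-1}$, and then \eqref{e:formule integrate1} supplies a uniform weighted Dirichlet energy bound on every compactly contained subball.

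With these $L^2$ bounds in hand, for each compact $K\subset\subset B_1$ I apply \eqref{e:C1_1/2 reg} on suitably rescaled balls centered at points of $K$ to obtain uniform $C^{1,s}(K)$ bounds for $\nabla_\tau u_k$ and uniform $C^{\alpha}(K)$ bounds for $\sgn(x_{n+1})|x_{n+1}|^a\de_{x_{n+1}}u_k$, together with corresponding $C^{\alpha}$ bounds on $u_k$ itself. Arzelà--Ascoli then produces a subsequence $(u_{k_j})$ satisfying \eqref{e:cpt2}, \eqref{e:cpt3} and \eqref{e:cpt4}. The strong $H^1_\loc$ convergence \eqref{e:cpt1} follows by decomposing
\[
|\nabla(u_{k_j}-u_0)|^2|x_{n+1}|^a=|\nabla_\tau(u_{k_j}-u_0)|^2|x_{n+1}|^a+\bigl(\sgn(x_{n+1})|x_{n+1}|^a\de_{x_{n+1}}(u_{k_j}-u_0)\bigr)^2|x_{n+1}|^{-a};
\]
the first summand tends to zero in $L^1_\loc$ by \eqref{e:cpt2}, and the second by \eqref{e:cpt3} combined with the local integrability of $|x_{n+1}|^{-a}$, valid since $a\in(-1,1)$.

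To see that $u_0$ itself solves \eqref{e:ob-pb local}, each defining condition passes to the limit: even symmetry and $u_0(\cdot,0)\geq 0$ from uniform convergence \eqref{e:cpt4}; the distributional inequality $\div(|x_{n+1}|^a\nabla u_0)\leq 0$ in $\mathscr{D}'(B_1)$ by testing against non-negative $\ph\in C^\infty_c$ and using the strong $H^1$ convergence; and the equation $\div(|x_{n+1}|^a\nabla u_0)=0$ off the contact set because on any open set where $u_0(\cdot,0)>0$ uniform convergence forces $u_{k_j}(\cdot,0)>0$ for $j$ large, so the corresponding equation for $u_{k_j}$ passes to $u_0$ via \eqref{e:cpt1}. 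Analyticity of $u_0$ on $B_1\setminus B_1'$ is automatic.

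Finally, for \eqref{e:cpt5}, Corollary~\ref{c:bd conditions} gives $x_{k_j}\in\Gamma(u_{k_j})\subseteq\cN(u_{k_j})$, so the three quantities $u_{k_j}(\cdot,0)$, $|\nabla_\tau u_{k_j}(\cdot,0)|$ and $\lim_{t\downarrow 0^+}t^a\de_{x_{n+1}}u_{k_j}(\cdot,t)$ all vanish at $x_{k_j}'$. By Theorem~\ref{t:reg} each of these (the last being the trace of $\sgn(x_{n+1})|x_{n+1}|^a\de_{x_{n+1}}u_{k_j}$ at $x_{n+1}=0^+$) is continuous on compact subsets of $B_1'$, and \eqref{e:cpt2}--\eqref{e:cpt4} deliver uniform convergence to the counterparts for $u_0$. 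Letting $j\to\infty$ along $x_{k_j}\to x_0$ therefore transfers all three vanishing conditions to $u_0$ at $x_0$, proving $x_0\in\cN(u_0)$. The main obstacle is the third step, namely verifying that the limit inherits the complete obstacle-problem structure, in particular the PDE on the non-contact set; the remaining estimates are the standard monotonicity-based $L^2$/$H^1$ compactness package.
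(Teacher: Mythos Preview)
Your proof is correct and follows essentially the same approach as the paper: obtain uniform $H^1_{\loc}$ bounds from the frequency hypotheses together with \eqref{e:L2 vs H}, feed them into the regularity estimate \eqref{e:C1_1/2 reg} to extract a convergent subsequence via Arzel\`a--Ascoli, and deduce \eqref{e:cpt5} from the inclusion $\Gamma(u_{k_j})\subseteq\cN(u_{k_j})$ and the uniform $C^\alpha$ convergence in \eqref{e:cpt2}--\eqref{e:cpt4}. The paper is terser in two places: it gets the Dirichlet bound directly from $\phi(|x|)\geq 2(1-t)$ on $B_t$, so that $\int_{B_t}|\nabla u_k|^2|x_{n+1}|^a\,\d x\leq D_{u_k}(1)/\bigl(2(1-t)\bigr)$, rather than passing through \eqref{e:formule integrate1}; and it simply cites \cite[Lemma~4.4]{CaSaSi08} for the passage to the limit in the obstacle problem structure and for strong $H^1_{\loc}$ convergence, whereas you spell out both the gradient decomposition (using the local integrability of $|x_{n+1}|^{-a}$) and the verification that $u_0$ solves \eqref{e:ob-pb local}.
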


\begin{proof}
For every $t<1$, we have that
\begin{align*}
\int_{B_t}|\nabla u_k(x)|^2\,|x_{n+1}|^a\,dx 
& \leq \frac{D_{u_k}(1)}{2(1-t)}
= \frac{I_{u_k}(1)\,H_{u_k}(1)}{2(1-t)}
\leq \frac{M}{2(1-t)},
\end{align*}
where we have set for convenience $M:=\sup_{k} I_{u_k}(0,1)$.
Moreover, from \eqref{e:L2 vs H}
we have that $\|u_k\|_{L^2(B_1,|x_{n+1}|^a\,\cL^{n+1})} 
\leq H_{u_k}(1) \leq 1$. The sequence $(u_{k})_{k\in\N}$
is equi-bounded in $H^1(B_t, |x_{n+1}|^a\,\cL^{n+1})$
for every $t<1$.
Therefore, 
\eqref{e:cpt1} -- \eqref{e:cpt4} follow from Theorem~\ref{t:reg} (cf.~also 
\cite[Lemma~4.4]{CaSaSi08}).
Moreover, since $x_{k_j}\in \Gamma(u_{k_j}) \subseteq 
\mathcal{N}(u_{k_j})$,
\eqref{e:cpt5} follows from \eqref{e:cpt2}--\eqref{e:cpt4}.
\end{proof}

\subsection{Blow-up profiles}
An important consequence of the monotonicity of the frequency
in Proposition~\ref{p:monotonia+lower}
is the existence of blow-up profiles.
For $u:B_R\to\R$ solution of \eqref{e:ob-pb local} we introduce the rescalings
\begin{equation}\label{e:rescaling0}
u_{x_0,r} (y) := 
\frac{r^{\frac{n+a}{2}}\,u(ry+x_0)}{H^{\sfrac12}(x_0,r)}
\quad \forall \; r \in(0,R - |x_0|), \;\forall\; y \in 
B_{\frac{R-|x_0|}{r}}.
\end{equation}

\begin{proposition}\label{p:blow-up}
Let $u$ be a solution to the thin
obstacle problem \eqref{e:ob-pb local} in $B_R$.
Then, for every $x_0 \in \Gamma(u)$ and for every sequence
of numbers $(r_j)_{j\in\N} \subset (0,1-|x_0|)$ with
$r_j \downarrow 0$, there exists a subsequence
$(r_{j_k})_{k\in \N}$ and function
$u_0 \in H^1_{\loc}(\R^{n+1},|x_{n+1}|^a\cL^{n+1})$ such that
$u_0$ satisfies \eqref{e:ob-pb local}, $u_0$ is homogeneous of
degree $I(x_0,0^+)$ and
\begin{equation}\label{e:blow-up}
u_{x_0,r_{j_k}} \to u_0 \quad \text{in }\; C^{0}_{\loc}(\R^{n+1})\cap
H^1_{\loc}(\R^{n+1}, |x_{n+1}|^a\cL^{n+1}).
\end{equation}
\end{proposition}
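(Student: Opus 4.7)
The plan is to apply the compactness result of Corollary~\ref{c:compactness} to the rescaled sequence $\{u_{x_0,r_j}\}_{j\in\N}$ on $B_1$ and then invoke the equality case in the monotonicity formula of Proposition~\ref{p:monotonia+lower} to deduce homogeneity of the limit. To set up the compactness argument, I would first verify its hypotheses: by Remark~\ref{r:scaling} each $u_{x_0,r_j}$ solves \eqref{e:ob-pb local} in $B_1$ for $j$ large, with $0 \in \Gamma(u_{x_0,r_j})$; the normalization built into \eqref{e:rescaling0} is tailored precisely so that $H_{u_{x_0,r_j}}(0,1) = 1$ via a direct change of variables; and by scale invariance of the frequency combined with Proposition~\ref{p:monotonia+lower},
\[
I_{u_{x_0,r_j}}(0,1) = I_u(x_0, r_j) \leq I_u(x_0, \bar r) < \infty
\]
for fixed $\bar r \in (0, R-|x_0|)$. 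Corollary~\ref{c:compactness} then furnishes a subsequence (not relabelled) and a solution $u_0$ of \eqref{e:ob-pb local} on $B_1$ to which $u_{x_0,r_j}$ converges in the topologies \eqref{e:cpt1}--\eqref{e:cpt4}.

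For the extension to all of $\R^{n+1}$, I would fix any $\rho > 1$ and exploit the identity $I_{u_{x_0,r_j}}(0,t) = I_u(x_0, t\,r_j)$. Monotonicity controls the frequency from above uniformly in $j$, and Corollary~\ref{c:H} then propagates the normalization $H_{u_{x_0,r_j}}(0,1) = 1$ to a polynomial-in-$\rho$ bound on $H_{u_{x_0,r_j}}(0,\rho)$, hence through \eqref{e:L2 vs H} to a uniform $L^2$ bound on $B_\rho$ with the weight $|x_{n+1}|^a$; a uniform Dirichlet bound on $B_\rho$ follows as in the proof of Corollary~\ref{c:compactness}. A standard diagonal extraction along $\rho_k \uparrow \infty$ then produces a global limit $u_0 \in H^1_{\loc}(\R^{n+1}, |x_{n+1}|^a\,\cL^{n+1})$ solving \eqref{e:ob-pb local} everywhere.

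The decisive step is to show that $u_0$ is $\lambda$-homogeneous with $\lambda := I_u(x_0, 0^+)$, whose existence is ensured by monotonicity of $I_u(x_0,\cdot)$ and the universal lower bound of Lemma~\ref{l:freq lower}. For every fixed $\rho > 0$, monotonicity again yields
\[
I_{u_{x_0,r_j}}(0,\rho) = I_u(x_0, \rho\, r_j) \longrightarrow \lambda \quad \text{as } j\to\infty.
\]
Strong $H^1_{\loc}$ convergence \eqref{e:cpt1} gives $D_{u_{x_0,r_j}}(0,\rho) \to D_{u_0}(0,\rho)$, while the $C^0_{\loc}$ convergence \eqref{e:cpt4} together with dominated convergence gives $H_{u_{x_0,r_j}}(0,\rho) \to H_{u_0}(0,\rho)$. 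Since $H_{u_0}(0,1) = 1$, the monotonicity estimate \eqref{e:H2} applied to $u_0$ ensures $H_{u_0}(0,\rho) > 0$ at every $\rho > 0$, so the frequency of $u_0$ is well-defined and equals $\lambda$ at every radius. The equality statement in Proposition~\ref{p:monotonia+lower} then forces $u_0$ to be $\lambda$-homogeneous.

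I expect the main obstacle to be the simultaneous control of the growth and the nondegeneracy of $H_{u_0}(0,\rho)$ across all scales $\rho > 0$, which is needed both to carry out the diagonal extraction on the unbounded domain and to pass to the limit in the frequency at every radius. Both issues are resolved by the two-sided growth estimates \eqref{e:H1}--\eqref{e:H2}, which transfer the normalization at scale $1$ to uniform bounds at every scale and therefore require no further input beyond the tools already established.
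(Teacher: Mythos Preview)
Your proposal is correct and follows essentially the same approach as the paper: apply Corollary~\ref{c:compactness} after verifying its hypotheses via scaling, monotonicity, and the normalization $H_{u_{x_0,r_j}}(0,1)=1$, extend to $\R^{n+1}$ by a diagonal argument using the growth bounds of Corollary~\ref{c:H}, and conclude homogeneity by passing to the limit in the frequency and invoking the equality case of Proposition~\ref{p:monotonia+lower}. The paper's version is more terse---it writes the limit $I_{u_0}(\ell)=\lim_k I_{u_{x_0,r_{j_k}}}(\ell)$ directly without separating the convergence of $D$ and $H$---but your additional care in checking $H_{u_0}(0,\rho)>0$ via \eqref{e:H2} is a reasonable elaboration of the same idea.
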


\begin{proof}
For every $\ell>0$, by Remark~\ref{r:scaling} we have
$I_{u_{x_0,r_j}}(\ell) = I_u(x_0,r_j\,\ell) \leq I_{u}(x_0,1-|x_0|)$.
Therefore, from Corollary~\ref{c:H} we infer that
there exists a constant $C=C(\ell)>0$ such that
\[
H_{u_{x_0,r_j}}(\ell) \leq C\, H_{u_{x_0,r_j}}(1) = C
\qquad\forall \; r_j < \ell^{-1}(R-|x_0|).
\]
We can then use Corollary~\ref{c:compactness}
and a diagonal argument to infer the existence of a subsequence
$(r_{j_k})_{k\in\N}$ and a solution $u_0$
such that \eqref{e:blow-up} holds.
We only need to show that $u_0$ is homogeneous.
To this aim we notice that, by taking into account
Lemma~\ref{l:freq lower}, we have for every $\ell>0$
\[
I_{u_0}(\ell) = \lim_{k\to \infty} 
I_{u_{x_0,r_{j_k}}}(\ell) = 
\lim_{k\to \infty} I_u(x_0,r_{j_k}\,\ell) = I_u(x_0,0^+)
\geq C_{\ref{l:freq lower}},
\]
In particular, by Proposition~\ref{p:monotonia+lower}
we conclude the homogeneity of $u_0$ of degree $I_u(x_0,0^+)$.
\end{proof}

\begin{corollary}\label{c:minima freq}
Let $u$ be a solution to the thin obstacle problem 
\eqref{e:ob-pb local} in $B_R$ . Then,
\begin{equation}\label{e:lower bound}
I_u(x_0,r) \geq 1+s \quad \forall\; x_0 \in \Gamma(u),\; \forall\; r \in(0,R-|x_0|).
\end{equation}
\end{corollary}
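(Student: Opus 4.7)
The plan is to reduce the lower bound on the frequency to the known minimal homogeneity of nontrivial homogeneous solutions to the thin obstacle problem, via monotonicity and a blow-up argument. By Proposition~\ref{p:monotonia+lower}, the map $r \mapsto I_u(x_0,r)$ is non-decreasing, so it suffices to show
\[
I_u(x_0,0^+) := \lim_{r\downarrow 0} I_u(x_0,r) \geq 1+s.
\]
Once this is established, the full statement follows immediately from monotonicity.

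To bound $I_u(x_0,0^+)$ from below, pick any sequence $r_j\downarrow 0$ and apply Proposition~\ref{p:blow-up} to extract a subsequence along which the rescalings $u_{x_0,r_{j_k}}$ converge in $C^0_{\loc}\cap H^1_{\loc}$ to a global solution $u_0$ of \eqref{e:ob-pb local}, homogeneous of degree $I_u(x_0,0^+)$. The normalization in \eqref{e:rescaling0} gives $H_{u_{x_0,r}}(1)=1$ by the change of variables $x=x_0+ry$, and the $H^1_{\loc}$ convergence together with the continuity of $H$ under this convergence (cf.~Corollary~\ref{c:compactness} and \eqref{e:formule integrate2}) yields $H_{u_0}(1)=1$. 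In particular $u_0\not\equiv 0$, so it is a nontrivial homogeneous solution to the thin obstacle problem.

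At this stage the conclusion reduces to the classification of the admissible homogeneities for nontrivial global homogeneous solutions of \eqref{e:ob-pb local}, which is the content of the result of Caffarelli--Salsa--Silvestre quoted in C.(i) of the introduction: any such homogeneity lies in $\{1+s\}\cup[2,\infty)$, and in particular is bounded below by $1+s$. Applied to $u_0$, this gives $I_u(x_0,0^+)\geq 1+s$ and thus, via the monotonicity noted in the first step, the desired bound $I_u(x_0,r)\geq 1+s$ for every admissible $r$. The main (and only nontrivial) ingredient of this argument is the classification of homogeneous solutions, which is however already available in the literature and is precisely the tool we are entitled to invoke here; all the rest of the argument is a direct combination of monotonicity and the compactness/blow-up machinery already set up in this section.
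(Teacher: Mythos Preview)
Your proof is correct and follows essentially the same route as the paper: reduce via monotonicity to bounding $I_u(x_0,0^+)$, pass to a nontrivial homogeneous blow-up $u_0$ via Proposition~\ref{p:blow-up}, and invoke the known minimal homogeneity from \cite{CaSaSi08}. The only cosmetic difference is that the paper justifies the last step by appealing to the $C^{1,s}$ regularity of solutions (which forces any homogeneous blow-up at a free boundary point to have degree at least $1+s$), whereas you invoke the full classification of admissible homogeneities $\{1+s\}\cup[2,\infty)$; both are drawn from the same reference and amount to the same argument.
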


\begin{proof}
We consider the rescaling $u_{x_0,r_j}$ and a blow-up limit $u_0$.
By Proposition~\ref{p:blow-up} we know that $u_0$ is homogeneous
of degree $I_u(x_0,0^+)$. 
Since solutions to \eqref{e:ob-pb local}
are $C^{1,s}(B_R)$ (cf.~\cite{CaSaSi08}), we easily conclude that
$I_u(x_0,0^+) \geq 1+s$ and \eqref{e:lower bound} follows by monotonicity.
\end{proof}

\begin{remark}
In general the limiting profile $u_0$ is not known to be unique.
Uniqueness for $\cH^{n-1}$-almost every free boundary point
with infinitesimal homogeneity $2m$ and $2m-1+s$
will be established in
Theorem~\ref{t:frequency}, while uniqueness at every regular
point follows from \cite{CaSaSi08} (see also \cite{FoSp16, Geraci}
for an approach via the epiperimetric inequality) and 
at every singular point for $s=\sfrac12$ from \cite{GaPe09}.
\end{remark}

\begin{remark}\label{r:freq modificata}

It is more common in the literature to define
the blow-up rescalings $\bar u_{x_0,r}$ as in 
\eqref{e:rescaling-1}.
Nevertheless, by the same computations above, one can show that
the height function $h_u(x_0,t) := \int_{\de B_t(x_0)} u^2\,\d\cH^{n}$
satisfies the analogous monotonicity properties of Corollary~\ref{c:H}
(see \cite{CaSaSi08})
and moreover by \eqref{e:formule integrate2} it is comparable to 
$H_u(x_0,t)$ (with a constant depending only on an upper bound
of the frequency).
In particular, this implies that the blow-ups with respect 
to these two 
different renormalizations only differ by a constant and all the results 
concerning them (\textit{e.g.}~the uniqueness)
can be indifferently proven for either of the two definitions.

Due to our definition of the frequency, in the sequel
we will always consider the rescalings 
defined in \eqref{e:rescaling0}.
\end{remark}

%
%

\section{Main estimates on the frequency}\label{s:frequency estimate}

In this section we prove the principal estimates on the frequency
that we are going to exploit in the sequel.

\begin{lemma}\label{l:lim uniforme}
For every $A>0$ there exists $C_{\ref{l:lim uniforme}}= 
C_{\ref{l:lim uniforme}} (A) >0$ such that,
if $u$ is a solution to the thin obstacle problem 
\eqref{e:ob-pb local} in $B_{2r}(x_0)$,
with $r>0$, $x_0 \in \Gamma(u)$ and $I_u(x_0,2r) \leq A$, then
for every $x \in B'_{\sfrac{r}{2}}(x_0)$ 
\begin{gather}
\label{e:H limitato}
\frac{1}{C_{\ref{l:lim uniforme}}} \leq \frac{H_u(x_0,r)}{H_u(x,r)} \leq 
C_{\ref{l:lim uniforme}}
\and
\frac{1}{C_{\ref{l:lim uniforme}}}
\leq \frac{D_u(x_0,r)}{D_u(x,r)} \leq C_{\ref{l:lim 
uniforme}},\\
\Big\vert I_u(x_0,r) - I_u(x,r) \Big\vert
\leq C_{\ref{l:lim uniforme}}.\label{e:I limitato}
\end{gather}
\end{lemma}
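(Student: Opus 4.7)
My plan is to rescale via Remark~\ref{r:scaling} so as to reduce to the case $x_0 = 0$ and $r = 1$, so that $u$ solves \eqref{e:ob-pb local} in $B_2$ with $0 \in \Gamma(u)$ and $I_u(0, 2) \leq A$. The target inequalities then compare $H_u(0,1), D_u(0,1)$ and $I_u(0,1)$ with $H_u(x,1), D_u(x,1)$ and $I_u(x,1)$ for arbitrary $x \in \overline{B'_{1/2}}$.

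First I would establish the upper bounds by direct integral estimates. Since $\phi'$ is supported on $\{1/2 < |y - x| < 1\}$, where $|y-x|^{-1} \leq 2$, and since $B_1(x) \subseteq B_{3/2}(0)$, I would obtain
\[
H_u(x, 1) \leq 4 \int_{B_{3/2}(0)} u^2 \, |y_{n+1}|^a \, \d y \leq 6 \, H_u(0, 3/2) \leq 6 \, (3/2)^{n+a+2A} \, H_u(0, 1),
\]
by combining \eqref{e:L2 vs H} and the monotonicity \eqref{e:H1}, which applies on $[1, 3/2]$ since $I_u(0, s) \leq A$ there by Proposition~\ref{p:monotonia+lower}. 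An analogous argument, using \eqref{e:formule integrate1} to write
\[
D_u(x, 1) \leq \int_{B_{3/2}(0)} |\nabla u|^2 \, |y_{n+1}|^a \, \d y \leq 2 \, D_u(0, 2),
\]
and then bounding $D_u(0, 2) \leq A\, H_u(0,2)/2 \leq C(A)\, H_u(0,1) \leq C'(A)\, D_u(0,1)$ by combining $I_u(0,2) \leq A$, \eqref{e:H1}, and the lower bound $I_u(0,1) \geq 1+s$ from Corollary~\ref{c:minima freq}, gives the upper bound on the ratio $D_u(x,1)/D_u(0,1)$.

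Next I would prove the lower bound on $H$ via a compactness and contradiction argument. Suppose there exist $u_k$ satisfying the hypotheses with, after normalization, $H_{u_k}(0, 1) = 1$, and points $x_k \in \overline{B'_{1/2}}$ with $H_{u_k}(x_k, 1) \to 0$. Corollary~\ref{c:compactness} (applied after rescaling) yields a subsequence $u_k \to u_0$ in $H^1_{\loc}$ and in $C^\alpha_{\loc}$, with $x_k \to x_\infty \in \overline{B'_{1/2}}$. Dominated convergence, noting that the integrand defining $H_u(x_k,1)$ is uniformly bounded because $\phi'$ vanishes near $y = x_k$, yields $H_{u_0}(x_\infty, 1) = 0$, so that $u_0$ vanishes on the annulus $B_1(x_\infty) \setminus B_{1/2}(x_\infty)$. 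Since this set intersects $\{x_{n+1} > 0\}$ in a non-empty open subset and $u_0$ is analytic there, a combination of unique continuation, even symmetry in $x_{n+1}$, and connectedness of $B_2 \cap \{x_{n+1} > 0\}$ forces $u_0 \equiv 0$ on $B_2$, contradicting $H_{u_0}(0,1) = 1$. A parallel argument with $D_u$ and strong $H^1$-convergence, using \eqref{e:cpt5} to guarantee $0 \in \mathcal{N}(u_0)$ (hence $u_0(0) = 0$) so that the constant value of $u_0$ on $B_1(x_\infty) \ni 0$ must vanish, yields the matching lower bound on $D$. Finally, \eqref{e:I limitato} follows at once from the two-sided bounds on $H$ and $D$ together with the containment $I_u(0,1) \in [1+s, A]$.

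The principal obstacle is the compactness step for the lower bounds, and in particular the propagation of the vanishing of $u_0$ from a small annulus to the entire $B_2$: this depends crucially on the analyticity of solutions away from the thin plane, the even symmetry in $x_{n+1}$, and the connectedness of the upper half-ball.
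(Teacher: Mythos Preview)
Your proposal is correct and follows essentially the same route as the paper: rescale to $x_0=0$, $r=1$, normalise $H_u(0,1)=1$, and run a compactness--contradiction argument via Corollary~\ref{c:compactness}, using analytic unique continuation in $\{x_{n+1}>0\}$ to conclude that any limit with $H_{u_0}(x_\infty,1)=0$ or $D_{u_0}(x_\infty,1)=0$ must vanish identically, against $H_{u_0}(0,1)=1$.

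The only difference is that you handle one direction of the two-sided bounds on $H$ and $D$ by the direct integral estimates $H_u(x,1)\leq 6\,(3/2)^{n+a+2A}H_u(0,1)$ and $D_u(x,1)\leq 2D_u(0,2)\leq C(A)D_u(0,1)$, whereas the paper treats both directions at once by assuming $\lim_k H_{u_k}(x_k,1)\in\{0,+\infty\}$ and ruling out $+\infty$ by the finiteness of the limit. Your direct bounds are slightly more elementary and avoid invoking compactness for that half; the paper's argument is marginally shorter. For the $D$ lower bound the paper reaches the contradiction through $D_{u_0}(0,1)=\lim_k D_{u_k}(0,1)\in[1+s,A]$, which lets you bypass \eqref{e:cpt5}; your use of \eqref{e:cpt5} to pin down $u_0(0)=0$ is equally valid.
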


\begin{proof}
By rescaling it is enough to consider the case $x_0=0$,
$r =1$ and $H_{u}(0,1) =1$ (cf.~Remark~\ref{r:scaling}).
In oder to prove \eqref{e:H limitato}, we argue by 
contradiction:
assume there exists functions $u_k$ 
and points $x_k\in B'_{\sfrac12}$
contradicting the first inequality \eqref{e:H limitato}, {\ie}
\[
\lim_{k\to +\infty}{H_{u_k}(x_k,1)} \in \{0, +\infty\}.
\]
Note that, since $I_{u_k}(0,2)\leq A$, it follows from \eqref{e:H1}
that $H_{u_k}(0,2)\leq 2^{n+a+2A}$.
In particular, we can apply Corollary \ref{c:compactness} and
(up to passing to a subsequence, not relabeled),
there exist $u_\infty$ and $x_\infty\in \bar B_{\sfrac12}$
such that $u_k\to u_\infty$
in $H^1_{\loc}(B_2, |x_{n+1}|^a\cL^{n+1})$
and $x_k\to x_\infty\in \bar B'_{\sfrac12}$, with 
$u_\infty$ solution to the thin obstacle problem in $B_R$ for every 
$R<2$.
By the strong convergence of $u_k$ to $u_\infty$ we then deduce that
$H_{u_\infty}(x_\infty,1)\in \{0,\infty\} \cap \R = \{0\}$.
Given that $u_\infty$ is analytical in $B_{2}\setminus \{x_{n+1} =0\}$,
by unique continuation we conclude that $u_\infty \equiv 0$ in $B_2$, against 
the assumption $H_{u_\infty}(0,1) = \lim_k H_{u_k}(0,1) =1$.

The second inequality in \eqref{e:H limitato} is proven by the same argument.
Indeed, under the same assumption $H_u(0,1)=1$, considering that 
$0\in \Gamma(u)$, we have that $D_u(0,1) = I_u(0,1) \in [1+s,A]$.
Therefore, given a sequence $u_k$ contradicting the claim, we deduce the 
existence of a solution $u_\infty$ such that $0=D_{u_\infty}(0,1) = \lim_{k} 
D_{u_\infty}(0,1)\in [1+s, A]$, which is impossible.

Finally, \eqref{e:I limitato} follows straightforwardly from \eqref{e:H 
limitato}:
\begin{align*}
\Big\vert I_u(0,1) - I_u(x,1) \Big\vert
& = \Big\vert \frac{D_u(0,1)}{H_u(x,1)} 
\left( \frac{H_u(x,1)}{H_u(0,1)} - 
\frac{D_u(x,1)}{D_u(0,1)}\right)  
\Big\vert \stackrel{\eqref{e:H limitato}}{\leq} C.
\end{align*}
\end{proof}

\begin{lemma}\label{l:monotonia}
For every $A>0$ there exists $C_{\ref{l:monotonia}} =
C_{\ref{l:monotonia}}(A)>0$ such that,
if $u$ is a solution to the thin obstacle problem 
\eqref{e:ob-pb local} in $B_{2r_1}(x_0)\subset \R^{n+1}$ with 
$x_0\in\Gamma(u)$ and $I_u(x_0,2r_1) \leq A$, then
for every $r_0\in (\sfrac{r_1}{8}, r_1)$
\begin{align}\label{e:monotonia con resto}
\int_{B_{r_1}(x_0)\setminus B_{r_0}(x_0)} 
\Big(\nabla u(z) \cdot (z - x_0) - &
I_u(x_0,r_0)\,u(z)\Big)^2 \frac{|z_{n+1}|^a}{|z-x_0|}\,\d z\notag\\
&\leq C_{\ref{l:monotonia}} H_u(x_0,2\,r_1)\,\big(I_u(x_0, 2\,r_1) - I_u(x_0, r_0)\big).
\end{align}
\end{lemma}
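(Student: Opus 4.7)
The proof builds on the pointwise differential identity for $I_u$ derived in Proposition~\ref{p:monotonia+lower}, reinterpreted as a Cauchy--Schwarz deficit. By Remark~\ref{r:scaling} I may assume $x_0=0$; set $\lambda_0:=I_u(0,r_0)$ and $\omega(z):=\nabla u(z)\cdot z - \lambda_0\,u(z)$. The plan is to first establish a pointwise (in $r$) identity expressing $E_u(0,r)-D_u^2(0,r)/H_u(0,r)$ as an integral of $(\nabla u\cdot z - I_u(0,r)u)^2$ against a suitable radial weight, then replace $I_u(0,r)$ by $\lambda_0$, and finally integrate in $r$ and apply Fubini to recover the annular integral on the left-hand side of \eqref{e:monotonia con resto}.

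The pointwise identity I would set up is the following: expanding the square and using identity \eqref{e:D} for the cross term together with the definitions of $E_u,\,H_u$, one checks that for every $r\in(0,R-|x_0|)$
\[
\int -\phi'\bigl(\tfrac{|z|}{r}\bigr)\,\frac{(\nabla u(z)\cdot z - I_u(0,r)\,u(z))^{2}}{r^{2}\,|z|}\,|z_{n+1}|^{a}\,dz = E_u(0,r) - \frac{D_u^{2}(0,r)}{H_u(0,r)} = \frac{H_u(0,r)\,I'_u(0,r)}{2r},
\]
the last equality coming from the computation $I'_u/I_u = 2(E_u H_u - D_u^{2})/(D_u H_u)$ already performed in the proof of Proposition~\ref{p:monotonia+lower}. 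Using $(a+b)^{2}\leq 2a^{2}+2b^{2}$ to swap $I_u(0,r)$ for $\lambda_0$ yields, for every $r\in(r_0,2r_1)$,
\[
\int -\phi'\bigl(\tfrac{|z|}{r}\bigr)\,\frac{\omega^{2}\,|z_{n+1}|^{a}}{r^{2}\,|z|}\,dz \;\leq\; \frac{H_u(0,r)\,I'_u(0,r)}{r} + \frac{2\bigl(I_u(0,r)-\lambda_0\bigr)^{2}\,H_u(0,r)}{r^{2}}.
\]

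I would then integrate this inequality over $r\in(r_0,2r_1)$. Since $-\phi'(|z|/r)/r^{2}$ equals $2/r^{2}$ exactly when $r\in(|z|,2|z|)$ and vanishes otherwise, Fubini gives, for every $z$ with $|z|\in(r_0,r_1)$ (so $(|z|,2|z|)\subset(r_0,2r_1)$), the computation $\int_{r_0}^{2r_1}-\phi'(|z|/r)\,r^{-2}\,dr = 1/|z|$. Combining this with the crude bound $1/|z|\geq 1/r_1$ on $B_{r_1}\setminus B_{r_0}$ gives
\[
\int_{B_{r_1}\setminus B_{r_0}} \frac{\omega^{2}\,|z_{n+1}|^{a}}{|z|}\,dz \;\leq\; r_1\int_{r_0}^{2r_1}\!\!\int -\phi'\bigl(\tfrac{|z|}{r}\bigr)\,\frac{\omega^{2}\,|z_{n+1}|^{a}}{r^{2}\,|z|}\,dz\,dr.
\]

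It then remains to control the right-hand side by the quantity $H_u(0,2r_1)(I_u(0,2r_1)-\lambda_0)$. The hypothesis $I_u(0,2r_1)\leq A$ gives $I_u(0,r)-\lambda_0\leq A$ on $(r_0,2r_1)$, so $(I_u(0,r)-\lambda_0)^{2}\leq A\,(I_u(0,2r_1)-\lambda_0)$; the monotonicity of $H_u$ (which follows from $H'_u\geq 0$ by \eqref{e:Hprime}, since $D_u\geq 0$) gives $H_u(0,r)\leq H_u(0,2r_1)$; the hypothesis $r_0\geq r_1/8$ yields $\int_{r_0}^{2r_1}r^{-2}\,dr\leq 8/r_1$ and $\int_{r_0}^{2r_1}I'_u(0,r)/r\,dr\leq (I_u(0,2r_1)-\lambda_0)/r_0\leq 8(I_u(0,2r_1)-\lambda_0)/r_1$. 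Plugging all these bounds in, both contributions to the right-hand side are of the form $C(A)\,H_u(0,2r_1)(I_u(0,2r_1)-\lambda_0)/r_1$, and multiplication by the outer $r_1$ produces the claimed estimate. The main bookkeeping obstacle is the mismatch between the natural weight $|z|^{-2}$ produced by Fubini and the weight $|z|^{-1}$ that appears in \eqref{e:monotonia con resto}; this accounts for the factor $r_1$ that must then be reabsorbed through the identities $\int r^{-1}\approx\int r^{-2}\approx 1/r_1$ on $(r_0,2r_1)$, which is where the assumption $r_0\geq r_1/8$ is used.
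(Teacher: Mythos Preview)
Your proof is correct and follows essentially the same route as the paper's: the paper's identity \eqref{e:I mon} is your pointwise identity (after clearing the factor $r^{-2}$), the paper's Fubini estimate \eqref{e:fubini} plays the same role as your explicit computation $\int_{r_0}^{2r_1}-\phi'(|z|/r)\,r^{-2}\,dr = 1/|z|$, and both proofs then swap $I_u(\cdot,r)$ for $\lambda_0$ via the elementary inequality and absorb the error using $H_u(\cdot,r)\leq H_u(\cdot,2r_1)$ and $r_0\geq r_1/8$. The only cosmetic difference is that the paper normalizes $r_1=1$ and organizes the Fubini step so that the prefactor is $r_0^{-1}$ rather than your $r_1$, which amounts to the same thing since $r_0\asymp r_1$.
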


\begin{proof}
By rescaling, it suffices to prove the lemma for $x_0 = 0$
and $r_1 = 1$.
We start off with the following computation:
\begin{align}\label{e:I mon}
2 \int_{B_t\setminus B_{\sfrac{t}{2}}} &
\Big(\nabla u(z) \cdot z - I_u(t)\,u(z)\Big)^2 
\frac{|z_{n+1}|^a}{|z|} \d z\notag\\
&= \int - \phi'\big(\textstyle{\frac{|z|}{t}}\big)
\Big(\nabla u(z) \cdot z - I_u(t)\,u(z)\Big)^2 
\frac{|z_{n+1}|^a}{|z|} \d z
\notag\allowdisplaybreaks\\
&= t^2 E_u(t) - 2\, t\,I_u(t)\,D_u(t) + I_u^2(t) H_u(t)\notag
\allowdisplaybreaks\\
&= \frac{t^2}{H_u(t)}\,\Big[E_u(t) H_u(t) - D_u(t)^2\Big]
\stackrel{\eqref{e:monotonia freq}}{=} \frac{t}{2}\,I_u'(t)\,H_u(t).
\end{align}
We now use the following integral estimate (whose elementary proof
is left to the readers)
\begin{align}\label{e:fubini}
\int_{B_{1}\setminus B_{r_0}} f(z) \d z& \leq 
r_0^{-1}\int_{r_0}^{2} \int_{B_t\setminus B_{\sfrac{t}{2}}}
f(z)\, \d z\,\d t
\quad\text{$\forall$ $f\geq0$, $r_0\leq 1$},
\end{align}
in order to deduce
\begin{align}
\int_{B_{1}\setminus B_{r_0}} &
\Big(\nabla u(z) \cdot z - 
I_u(r_0)\,u(z)\Big)^2 \frac{|z_{n+1}|^a}{|z|}\,\d z\notag
\allowdisplaybreaks\\ & 
\stackrel{\eqref{e:fubini}}{\leq}
r_0^{-1}\int_{r_0}^{2} \int_{B_t\setminus B_{\sfrac{t}{2}}}
\Big(\nabla u(z) \cdot z - 
I_u(r_0)\,u(z)\Big)^2 \frac{|z_{n+1}|^a}{|z|}\,\d z\,\d t\notag
\allowdisplaybreaks\\
&\leq 2\,
r_0^{-1}\int_{r_0}^{2} \int_{B_t\setminus B_{\sfrac{t}{2}}}
\Big[\big(\nabla u(z) \cdot z - 
I_u(t)\,u(z)\big)^2 
+\big(I_u(t) - I_u(r_0)\big)^2u^2(z)\Big]
\frac{|z_{n+1}|^a}{|z|}\,\d z\,\d t\notag
\allowdisplaybreaks\\
& 
\stackrel{\eqref{e:I mon}}{\leq}
r_0^{-1}\int_{r_0}^{2} \frac{t\, H_u(t)}{2}\, I_u'(t)\, \d t
+ 2\,
r_0^{-1}\,\big(I_u(2) - I_u(r_0)\big)^2\,
\int_{r_0}^{2} H_u(t)\,\d t.\label{e:prima stima0}
\end{align}
Now recall that by \eqref{e:H2} we have that
$H_u(t) \leq H_u(2)$ for all $t \leq 2$.
Hence, from \eqref{e:prima stima0} we get
\begin{align*}
\int_{B_{1}\setminus B_{r_0}} &
\Big(\nabla u(z) \cdot z - 
I_u(r_0)\,u(z)\Big)^2 \frac{|z_{n+1}|^a}{|z|}\,\d z\notag\\ 
& \leq
r_0^{-1}\,H_u(2) \int_{r_0}^{2} I_u'(t)\, \d t+4
r_0^{-1} \,H_u(2) \big(I_u(2) - I_u(r_0)\big)^2\leq C\,H_u(2)\,\big(I_u(2) - 
I_u(r_0)\big),
\end{align*}
where we used that $r_0\geq \frac{1}{8}$ and 
and $I_u(2) \leq A$.
\end{proof}

\subsection{Oscillation estimate of the frequency}
We introduce the following notation for the radial variation of the frequency
at a point $x \in \Gamma(u)$: given $0<\rho<r$, we set
\[
\Delta^r_{\rho}(x) := I_u(x,r) - I_u(x,\rho).
\]
The following lemma shows how the spatial oscillation of the frequency in two 
nearby points at a given scale is in turn controlled by the radial variations
at comparable scales. Here, we exploit for the thin obstacle problem an argument 
introduced in \cite[Theorem~4.2]{DMSV17} for multiple-valued functions.

\begin{proposition}\label{p:D_x frequency}
For every $A>0$ there exists 
$C_{\ref{p:D_x frequency}}(A)>0$ such that,
if $\rho>0$, $R>6$
and $u : B_{4R\rho}(x_0) \to \R$ is a solution to the thin
obstacle problem \eqref{e:ob-pb local} 
in $B_{4R\rho}$, with $x_0\in\Gamma(u)$ and 
$I_u(x_0,4R\rho) \leq A$, then
\begin{equation}\label{e:D_x frequency}
\big\vert I_u\big(x_1, R\rho\big) - I_u\big(x_2, R\rho\big)\big\vert
\leq C_{\ref{p:D_x frequency}} \,
\left[\Big(\Delta^{2(R+2)\rho}_{\sfrac{(R-4) \rho}{2}}(x_1)\Big)^{\sfrac{1}{2}} 
+ 
\Big(\Delta^{2(R+2)\rho}_{\sfrac{(R-4)\rho}{2}}(x_2)\Big)^{\sfrac{1}{2}}\right],
\end{equation}
for every $x_1, x_2 \in B'_{\rho}$.
\end{proposition}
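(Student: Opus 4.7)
Rescale to $\rho=1$ by the scaling invariance of the frequency (Remark~\ref{r:scaling}), so that $x_1,x_2\in B_1'$, $|v|:=|x_2-x_1|\le 2$, and the scale is $R>6$. All of $H_u$, $D_u$, $I_u$ at nearby base points and comparable scales are controlled by constants depending only on $A$ via Lemma~\ref{l:lim uniforme}. Parametrize the segment $\gamma(t):=x_1+tv$, $t\in[0,1]$, and write
\[
I_u(x_2,R)-I_u(x_1,R)=\int_0^1 \partial_v I_u(\gamma(t),R)\,\d t,
\]
reducing the proposition to a pointwise bound on the spatial directional derivative. The approach follows the framework of De Lellis-Marchese-Spadaro-Valtorta~\cite{DMSV17}.

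\textbf{Derivative identity.} Because $v$ is horizontal and $\gamma(t)_{n+1}=0$, the substitution $z\mapsto z-\gamma(t)$ in the integrals defining $H_u$ and $D_u$ leaves the weight $|z_{n+1}|^a$ invariant, and one computes
\[
\partial_v H_u(x,R)=-2\int \phi'\big(\tfrac{|z-x|}{R}\big)\,\frac{u(z)\,\nabla u(z)\cdot v}{|z-x|}\,|z_{n+1}|^a\,\d z,
\]
together with an analogous formula for $R\,\partial_v D_u$. The inner variation of the weighted Dirichlet energy along the flow of the horizontal field $Y(z):=\phi(|z-x|/R)\,v$ is admissible, since $Y$ is tangent to the thin set and preserves the obstacle inequality, yielding the Pohozaev-type identity
\[
\int \phi'\,\frac{(z-x)\cdot v}{|z-x|}\,|\nabla u|^2\,|z_{n+1}|^a\,\d z=2\int \phi'\,\frac{(\nabla u\cdot(z-x))(\nabla u\cdot v)}{|z-x|}\,|z_{n+1}|^a\,\d z.
\]
Substituting into $H_u\,\partial_v I_u=R\,\partial_v D_u-I_u\,\partial_v H_u$ produces the key formula
\[
H_u(x,R)\,\partial_v I_u(x,R)=-2\int \phi'\big(\tfrac{|z-x|}{R}\big)\,(\nabla u\cdot v)\,\frac{\nabla u\cdot(z-x)-I_u(x,R)\,u(z)}{|z-x|}\,|z_{n+1}|^a\,\d z,
\]
whose right-hand side vanishes identically if $u$ is $I_u(x,R)$-homogeneous about $x$.

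\textbf{Cauchy-Schwarz with a symmetric splitting.} Writing $\Delta_i:=\Delta^{2(R+2)}_{(R-4)/2}(x_i)$, Cauchy-Schwarz on the key formula separates $|H_u\,\partial_v I_u|^2$ into two integral factors. For the factor with $(\nabla u\cdot(z-x)-I_u(x,R)\,u)^2/|z-x|$, I replace $I_u(x,R)$ by $I_u(x_1,(R-4)/2)$, absorbing the error into the bootstrap below. Since $R>6$, the annulus $\{R/2\le|z-\gamma(t)|\le R\}$ is contained in $B_{R+2}(x_1)\setminus B_{(R-4)/2}(x_1)$, and Lemma~\ref{l:monotonia} applied with $r_1=R+2$, $r_0=(R-4)/2$ (the condition $r_0>r_1/8$ is exactly $R>6$) bounds this factor by $C(A)\,H_u\,\Delta_1$; by symmetry $C(A)\,H_u\,\Delta_2$ as well. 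For the factor with $(\nabla u\cdot v)^2/|z-x|$, the naive bound $(\nabla u\cdot v)^2\le|v|^2|\nabla u|^2$ is insufficient, so I use the symmetric decomposition
\[
\nabla u\cdot v=\bigl[\nabla u\cdot(z-x_1)-I\,u\bigr]-\bigl[\nabla u\cdot(z-x_2)-I\,u\bigr],\qquad I:=I_u(x_1,(R-4)/2),
\]
and apply Lemma~\ref{l:monotonia} at both $x_1$ and $x_2$ (together with the triangle inequality for the $I$-values and Lemma~\ref{l:lim uniforme} to swap base points in $H_u$) to get
\[
\int -\phi'\big(\tfrac{|z-\gamma(t)|}{R}\big)\,\frac{(\nabla u\cdot v)^2}{|z-\gamma(t)|}\,|z_{n+1}|^a\,\d z\le C(A)\,H_u\bigl(\Delta_1+\Delta_2+|I_u(x_1,R)-I_u(x_2,R)|^2\bigr).
\]

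\textbf{Integration, bootstrap, and main obstacle.} Combining the two factor estimates and dividing by $H_u^2$ gives
\[
|\partial_v I_u(\gamma(t),R)|\le C(A)\sqrt{\Delta_1+\Delta_2}+C(A)\,|I_u(x_1,R)-I_u(x_2,R)|^{1/2}(\Delta_1+\Delta_2)^{1/4},
\]
and integration in $t$ (with $|v|\le 2$), followed by a Young-type absorption of the quadratic self-reference $|I_u(x_1,R)-I_u(x_2,R)|$ on the right, produces the claim, after which rescaling recovers the statement for general $\rho$. The main obstacle is precisely the bound on $\int(\nabla u\cdot v)^2$ in the first factor: the naive estimate produces an additive $|v|^2/R^2$ error which does not vanish when $\Delta_1=\Delta_2=0$, and only the symmetric decomposition—at the price of introducing a quadratic self-referential term and the attendant bootstrap—delivers the clean inequality of the proposition.
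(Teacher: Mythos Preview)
Your derivative identity and the symmetric decomposition $\nabla u\cdot v=\bigl[\nabla u\cdot(z-x_1)-Iu\bigr]-\bigl[\nabla u\cdot(z-x_2)-Iu\bigr]$ are both correct and coincide with the paper's set-up (the piece proportional to $u$ integrates to zero against the other factor, which is how the paper passes to $\Delta\mathcal{E}$ in~\eqref{e:derivata2}). The gap is in your bound on ``Factor~A'', the integral of $(\nabla u\cdot(z-x)-I_u(x,R)u)^2|z_{n+1}|^a/|z-x|$ over the annulus centered at the \emph{moving} point $x=\gamma(t)$. You assert that after replacing $I_u(x,R)$ by $I_u(x_1,(R-4)/2)$, Lemma~\ref{l:monotonia} at $x_1$ gives $CH_u\Delta_1$; but Lemma~\ref{l:monotonia}'s integrand is $(\nabla u\cdot(z-x_1)-\cdots)^2/|z-x_1|$, whereas yours still carries $\nabla u\cdot(z-x)$. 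The annulus containment does not fix this: writing $\nabla u\cdot(z-x)=\nabla u\cdot(z-x_1)-t\,\nabla u\cdot v$ shows that converting base points costs an extra copy of Factor~B, plus a term $(I_u(x_1,r_0)-I_u(\gamma(t),R))^2H_u$ involving the frequency at the running point. Carrying these errors through, your stated pointwise inequality (with the exponents $|\Delta I|^{1/2}(\Delta_1+\Delta_2)^{1/4}$) does not follow, and the honest bootstrap one obtains is of the shape $|\Delta I|\le C(A)(\delta+|\Delta I|)^2$, which Young's inequality cannot absorb since $C(A)$ carries no smallness.

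The paper does not attempt to bound Factor~A by a $\Delta_i$ at all. Instead it controls $|\nabla u\cdot(z-x)-I_u(x,R)u|$ pointwise in $L^\infty$ via the $C^{1,s}$ regularity (Theorem~\ref{t:reg} and specifically Remark~\ref{r:reg2}), obtaining $M:=\sup_{B_R(x)}|\nabla u\cdot(z-x)-I_u(x,R)u|\le C\,R^{-(n+a)/2}H_u^{1/2}(0,2R+2)$. Then only $\int|\Delta\mathcal{E}|/|z-x|\le\int(|\mathcal{E}_1|+|\mathcal{E}_2|)/|z-x|$ remains under the integral sign, and each $\int|\mathcal{E}_i|$ is handled by Cauchy--Schwarz and Lemma~\ref{l:monotonia} applied \emph{at $x_i$}, where the base point in the integrand is already the correct one. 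This yields $|\partial_e I_u(\gamma(t),R)|\le C(\Delta_1^{1/2}+\Delta_2^{1/2})$ directly, with no self-reference and no bootstrap; integrating along the segment finishes. The $L^\infty$ estimate from the regularity theory is the ingredient your argument is missing.
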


\begin{proof}
\noindent{\bf 1.}
Without loss of generality, we show the proposition
for $x_0=0$ and $\rho =1$.
The proof is based on estimating the tangential derivative
of the frequency function $x\mapsto I_u(x,t)$ for a fixed radius $t>0$.
Thus, we start off noticing that the functions $x\mapsto H_u(x,t)$
and $x\mapsto D_u(x,t)$ are differentiable and, for every 
$e \in \R^{n+1}$ with $e \cdot e_{n+1} = 0$,
we have that
\begin{align}\label{e:dH}
\de_e H_u(x,t) = - 2 \, \int \phi'\big(\textstyle{\frac{|y|}{t}}\big)\,
u(y+x)\, \de_e u(y+x)\,\frac{|y_{n+1}|^a}{|y|} \,\d y,
\end{align}
and
\begin{align}\label{e:dD}
 \de_e D_u(x,t) & = 2\int 
\phi\big(\textstyle{\frac{|y|}{t}}\big)
\nabla u(y+x)  \cdot \nabla (\de_e u)(y+x)\, |y_{n+1}|^a\,\d y\notag\\
& = - 2\,t^{-1}\int \phi'\big(\textstyle{\frac{|y|}{t}}\big)\,
\de_e u(y+x)\, \nabla u(y+x) \cdot\frac{y}{|y|}\,|y_{n+1}|^a
\,\d y,
\end{align}
where the second equality follows from the divergence theorem
applied to the vector field $V(y):= \phi\big(\textstyle{\frac{|y|}{t}}\big)\,
\de_e u(y+x)\, |y_{n+1}|^a \nabla u(y+x)$
(note that $V \in C^{0}_c(B_t(x))$ and by Theorem~\ref{t:reg} 
and by Corollary~\ref{c:bd conditions} the divergence of $V$ 
does not concentrate on $B_1'$).
We consider next $e := x_2 - x_1$, and set
\[
\mathcal{E}_i(z) := \nabla u(z) \cdot(z-x_i) -  I_u(x_i,t)\,u(z)
\quad\text{for $i=1,2$,}
\]
\[
\Delta I:= I_u(x_1,t) - I_u(x_2,t)
\and
\Delta\mathcal{E}(z):=\mathcal{E}_1(z) - \mathcal{E}_2(z).
\]
Then, we have that
$\de_e u(z) = \Delta I \cdot u(z) + \Delta\mathcal{E}(z)$ and from 
\eqref{e:D}, 
\eqref{e:dH} -- \eqref{e:dD} we get also
\begin{align*}
\de_e D_u(x,t) &= 2\,\Delta I\cdot D_u(x,t)
-2\,t^{-1} \int \phi'\big(\textstyle{\frac{|y|}{t}}\big)  \, 
\Delta\mathcal{E}(y+x) \;
\nabla u (y+x) \cdot \frac{y}{|y|}\,|y_{n+1}|^a\,\d y,
\end{align*}
and
\begin{align*}
\de_e H_u(x,t) &= 2\Delta I \cdot H_u(x,t)
-2 \int \phi'\big(\textstyle{\frac{|y|}{t}}\big)  \, 
\Delta\mathcal{E}(y+x) \;
u(y+x)\,\frac{|y_{n+1}|^a}{|y|}\,\d y.
\end{align*}
In particular, by direct computation
\begin{align}\label{e:derivata2}
\de_e I(x,t) & =
\frac{t}{H_u(x,t)^2}\,\big(
H_u(x,t) \, \de_e D_u(x,t) - D_u(x,t)\, \de_e H_u(x,t) \big)\notag\\ 
& = \frac{2}{H_u(x,t)}\int -\phi'\big(\textstyle{\frac{|y|}{t}}\big)  \, 
\Delta\mathcal{E}(y+x)\;
\Big(\nabla u (y+x) \cdot y -
I_u(x,t)\, u(y+x)\Big)\,\frac{|y_{n+1}|^a}{|y|}\,\d y.
\end{align}

\medskip

\noindent{\bf 2.} We use now \eqref{e:derivata2} with $t = R$ and $x \in B_1'$. 
Note that, since $x\in B_1'$, by \eqref{e:C1_1/2 
reg} -- \eqref{e: stima reg cacona},
\eqref{e:L2 vs H} and \eqref{e:H limitato} we infer that
\begin{align*}
M&:=\sup_{y\in B_R}|\nabla u (y+x) \cdot y - I_u(x,R)\, u(y+x)|\\
& \leq 
\sup_{z\in B_{R+1}}\big(|\nabla u (z) \cdot z|+|\nabla_\tau u(z)|\big)
+ I_u(x,R)\|u\|_{C^0(B_{R+1})}\\
&\leq C\,R^{-\frac{n+a}{2}}\,H_u^{\sfrac12}\big(0,
2R +2\big), 
\end{align*}
for some constant $C=C(A)>0$. Hence, we have that 
\begin{align}\label{e:derivata2.5}
\de_e I_u(x,R) &\leq 
\frac{2M}{H_u(x,R)}
\int -\phi'\big(\textstyle{\frac{|y|}{R}}\big)
\big(|\mathcal{E}_1(y+x)| + |\mathcal{E}_2(y+x)|\big)
\frac{|y_{n+1}|^a}{|y|}\,\d y.
\end{align}
In order to estimate the integral term in \eqref{e:derivata2.5}, 
we notice that 
\[
B_{R}(x)\setminus B_{\sfrac{R}{2}}(x)
\subset B_{R+2}(x) \setminus B_{\sfrac{R}{2}-2}(x_i)
\quad \forall \,x\in B_1', \;\text{for }\;i=1,2;
\]
therefore
\begin{align}\label{e:conto integrale}
\int_{B_{R}\setminus B_{\sfrac{R}{2}}}
|\mathcal{E}_i(y+x)|\,\frac{|y_{n+1}|^a}{|y|}\,\d y
&\leq \frac{2\,(R+2)}{R}\int_{B_{R+2}(x_i) \setminus 
B_{\sfrac{R}{2}-2}(x_i)}|\mathcal{E}_i(z)| \,\frac{|z_{n+1}|^a}{|z-x_i|}\,\d 
z\notag\\
&\leq C\, R^{\frac{n+a}{2}}\left( 
\int_{B_{R+2}(x_i) \setminus B_{\sfrac{R}{2}-2}(x_i)}
\mathcal{E}_i^2(z) \,
\frac{|z_{n+1}|^a}{|z-x_i|}\,\d z,
\right)^{\sfrac12},
\end{align}
where we used $R>6$ and a direct computation to estimate
\[
\int_{B_{R+2}(x_i) \setminus B_{\sfrac{R}{2}-2}(x_i)}
\frac{|z_{n+1}|^a}{|z-x_i|}\,\d z \leq C\,R^{{n+a}},
\]
for a dimensional constant $C>0$.
We are in the position to apply Lemma~\ref{l:monotonia}:
\begin{align}
\int_{B_{R+2}(x_i) \setminus B_{\sfrac{R}{2}-2}(x_i)}
\mathcal{E}_i^2(z) \,
\frac{|z_{n+1}|^a}{|z-x_i|}\,\d z 
\leq C_{\ref{l:monotonia}}(A)\,H_u\big(x_i,2R+4\big)\,
\Delta^{2(R+2)}_{\sfrac{R}{2}-2}(x_i). 
\label{e:stima secondo fattore}
\end{align}
Using \eqref{e:derivata2.5} -- \eqref{e:stima secondo fattore},
we get
\begin{align}\label{e:finale con brio}
\de_e I_u(x,R)
&\leq C\,\Big(\Delta^{2(R+2)}_{\sfrac{R}{2}-2}(x_1)\Big)^{\sfrac12}
+ C\,\Big(\Delta^{2(R+2)}_{\sfrac{R}{2}-2}(x_2)\Big)^{\sfrac12},
\end{align}
having used \eqref{e:monotonia H} and \eqref{e:H limitato} 
to infer that
\[
H_u\big(0,2R+2\big)^{\sfrac12}\,H_u\big(x,R\big)^{-1}
H_u\big(x_i,2R+4\big)^{\sfrac12}
\leq C(A)\,\frac{H_u(0,2R+4)}{H_u(0,R)}\leq C(A).
\]
In this respect, recall that $x,x_i\in B_1'$ and $R>6$,
so that we are in the position to apply Lemma~\ref{l:lim uniforme}.
The conclusion now follows by integrating
\eqref{e:finale con brio} along the segment $\{x_1+ r\,e:\, r\in[0,1]\}$.
\end{proof}

%
%
\section{Mean-flatness and frequency function}

\subsection{Mean-flatness}\label{s:mean-flatness}
We are going to use the following generalization of the Jones' $\beta$-numbers
introduced in \cite{Jones90}, also called \textit{mean-flatness}, 
which have been already extensively used in the literature
(cf., for example, \cite{AmFuPa,AzTo15,DMSV17,NaVa1, NaVa2}
and the list of references therein).
We adopt the standard notation $\dist(y, E) := \inf_{x \in E} |y-x|$
for the distance of a point $y$ from a given subset $E\subset\R^{n+1}$.

\begin{definition}\label{d:mean-flat}
Given a Radon measure $\mu$ in $\R^{n+1}$, $ p \in [1,+\infty)$ and $k \in \{0, 1, \ldots, n\}$,
for every $x_0 \in \R^n$ and for every $r>0$, we set
\begin{equation}\label{e:beta}
\beta^{(k)}_{\mu,p} (x,r) := \inf_{\cL} \Big(
r^{-k-p} \int_{B_r(x)} \dist(y,\cL)^p\d\mu(y)\Big)^{\frac{1}{p}},
\end{equation}
where the infimum is taken among all affine $k$-dimensional planes $\cL \subset \R^{n+1}$.
\end{definition}

In the case $p=2$ we have the following elementary characterization.
Let $x_0 \in \R^{n+1}$ and $r>0$ be such that $\mu(B_{r}(x_0)) >0$, and
let us denote by $\bar x_{x_0,r}$ the barycenter of $\mu$ in $B_r(x_0)$, {\ie}
\[
\bar x_{x_0,r} := \frac{1}{\mu(B_{r}(x_0))} \int_{B_{r}(x_0)} x \, \d\mu(x).
\]
Let moreover ${\bf B}_{x_0}:\R^{n+1} \times \R^{n+1} \to \R$ be the symmetric 
positive 
semi-definite bilinear
form given by
\[
{\bf B}_{x_0}(v, w) := \int_{B_{r}(x_0)} \big((x-\bar x_{x_0,r}) \cdot v\big)\;\big( 
(x-\bar x_{x_0,r}) \cdot w\big)\,\d\mu(x) 
\quad\forall \; v,\,w \in \R^{n+1}.
\]
By standard linear algebra there exists
an orthonormal basis of vectors in $\R^{n+1}$ which diagonalizes the bilinear 
form ${\bf B}_{x_0}$:
namely, there exist $\{v_1,\ldots,v_{n+1}\} \subset \R^{n+1}$ (in general
not uniquely determined) such that
\begin{itemize}
\item[(i)] $\{v_1,\ldots,v_{n+1}\}$ is a Euclidean orthonormal basis,
{\ie}~$v_i \cdot v_j = \delta_{ij}$;
\item[(ii)] ${\bf B}_{x_0}(v_i, v_i) = \lambda_i$, for some $0\leq \lambda_{n+1} \leq 
\lambda_{n} \leq \cdots \leq \lambda_1$;
\item[(iii)] ${\bf B}_{x_0}(v_i, v_j) = 0$ for $i\neq j$.
\end{itemize}
The characterization is then the following: the infimum in 
the definition of $\beta^{(k)}_{\mu,2}$ is reached
by all the affine planes $\cL = \bar x_{x_0,r} + \textup{Span}\{v_1,\ldots, 
v_k\}$ and
\begin{equation}\label{e:identita chiave}
\int_{B_{r}(x_0)} \big((x-\bar x_{x_0,r}) \cdot v_i\big)\,x\,\d\mu(x) = \lambda_i\,v_i 
\quad \forall\; i=1, \ldots, n+1,
\end{equation}
\begin{equation}\label{e:beta-charac}
\beta^{(k)}_{\mu,2}(x_0, r) = \Big(r^{-k-2}\sum_{l=k+1}^{n+1}\lambda_{l}\Big)^{\frac12}.
\end{equation}

\medskip

In the ensuing sections we are going to consider only the case
$k= n-1$ and $p = 2$: in order to simplify the 
notation we will always write $\beta_\mu$ for $\beta^{(n-1)}_{\mu, 2}$.

\subsection{Control of the mean-flatness via the frequency}\label{s:mean-flatness vs freq}
The main link between Jones' $\beta$-numbers and the geometric properties
of the free boundary is given by the following observation:
the mean-flatness of an arbitrary measure $\mu$ supported on $\Gamma(u)$ 
is controlled by the integration with respect to $\mu$ of suitable radial 
oscillations of the frequency.
This follows closely the approach by Naber--Valtorta \cite[Theorem~7.1]{NaVa1}
for harmonic maps and minimal surfaces. 
Because of the intrinsic renormalization of the frequency function here we need to use
the estimate in Proposition~\ref{p:D_x frequency} as done in \cite{DMSV17}
for multiple-valued functions.

\begin{proposition}\label{p:mean-flatness vs freq}
For every $A>0$ and $R>5$, there exists a constant 
$C_{\ref{p:mean-flatness vs freq}} (A,R)>0$ with this property. 
Let $r>0$, $u \in H^1(B_{(4R+10) r}(x_0), |x_{n+1}|^a\cL^{n+1})$
be a solution to the thin obstacle problem \eqref{e:ob-pb 
local} in $B_{(4R+10)r}(x_0)$,
with $x_0 \in \Gamma(u)$ and $I(x_0,(4R+10)r) \leq A$, 
and let $\mu$ be a finite Borel measure with 
$\spt(\mu)\subseteq \Gamma(u)$; then 
\begin{equation}\label{e:mean-flatness vs freq}
\beta_{\mu}^2 (p,r) \leq 
\frac{C_{\ref{p:mean-flatness vs freq}}}{r^{n-1}}
\int_{B_{r}(p)} 
\Delta_{(R-5)\,\sfrac{r}{2}}^{(2R+4)\,r}(x)\,\d\mu(x)
\quad\forall\;p \in \Gamma(u) \cap B_{r}.
\end{equation}
\end{proposition}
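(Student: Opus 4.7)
The plan is to combine the spectral characterization \eqref{e:beta-charac} of $\beta_\mu^{(n-1)}$ with the tangential derivative formula for the frequency derived in the proof of Proposition~\ref{p:D_x frequency}, following the strategy used by Naber--Valtorta \cite{NaVa1} for harmonic maps and by De Lellis--Marchese--Spadaro--Valtorta \cite{DMSV17} for multiple-valued functions. The first step is a reduction to a single eigenvalue of the bilinear form $\mathbf B_p$. Since $\spt\mu\subseteq\Gamma(u)\subset\R^n\times\{0\}$, the barycenter $\bar x_{p,r}$ has vanishing last coordinate and $(x-\bar x_{p,r})\cdot e_{n+1}=0$ for $\mu$-a.e.~$x$, which forces $\mathbf B_p\,e_{n+1}=0$; so $\lambda_{n+1}=0$, $e_{n+1}$ can be taken as $v_{n+1}$, and \eqref{e:beta-charac} gives
\[
r^{n+1}\beta_\mu^2(p,r)\;=\;\lambda_n\;=\;\min_{v\in e_{n+1}^\perp,\,|v|=1}\mathbf B_p(v,v).
\]
Letting $v$ denote a unit eigenvector realizing $\lambda_n$ (orthogonal to $e_{n+1}$ as soon as $\lambda_n>0$, else the inequality is trivial), the goal reduces to proving
\[
\int_{B_r(p)}\bigl((x-\bar x_{p,r})\cdot v\bigr)^2\,\d\mu(x)\;\leq\;C\,r^2\,\int_{B_r(p)}\Delta^{(2R+4)r}_{(R-5)r/2}(x)\,\d\mu(x).
\]

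The next step is a symmetrization: by the elementary variance identity
\[
\int\bigl((x-\bar x_{p,r})\cdot v\bigr)^2\d\mu(x)=\frac{1}{2\mu(B_r(p))}\iint_{B_r(p)\times B_r(p)}\bigl((x-y)\cdot v\bigr)^2\d\mu(x)\d\mu(y),
\]
the problem is transferred to a pairwise estimate on points of the support of $\mu$. The core claim I would establish is then the pointwise bound
\[
\bigl((x-y)\cdot v\bigr)^2\;\leq\;C\,r^2\,\Bigl(\Delta^{(2R+4)r}_{(R-5)r/2}(x)+\Delta^{(2R+4)r}_{(R-5)r/2}(y)\Bigr)
\]
for all $x,y\in B_r(p)\cap\Gamma(u)$. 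Integrating this bound against $\d\mu(x)\d\mu(y)$, using $\mu(B_r(p))$ to cancel one integration against the factor $1/\mu(B_r(p))$, and dividing by $r^{n+1}$ immediately yields \eqref{e:mean-flatness vs freq}.

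To prove the pairwise bound I would replay the computation in the proof of Proposition~\ref{p:D_x frequency}, but now with the direction $e$ taken equal to the fixed unit vector $v$ rather than $x_2-x_1$. Starting from \eqref{e:derivata2} with $e=v$ and integrating $\partial_v I_u(\cdot,Rr)$ along the segment $\gamma(t):=(1-t)y+tx$, the factor $(x-y)\cdot v$ is generated naturally when the increment $\gamma'(t)=x-y$ is paired with the gradient. Applying Cauchy--Schwarz to the resulting integral, invoking Lemma~\ref{l:monotonia} to control the integrand of $\Delta\mathcal E$ on the relevant annulus and Lemma~\ref{l:lim uniforme} to compare $H_u$ at nearby points, produces the square-root bound
\[
\Big|(x-y)\cdot v\Big|\;\leq\;C\,r\,\Bigl(\Delta^{(2R+4)r}_{(R-5)r/2}(x)^{\sfrac12}+\Delta^{(2R+4)r}_{(R-5)r/2}(y)^{\sfrac12}\Bigr),
\]
which squared gives the claim.

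The main obstacle is precisely the extraction of the geometric factor $(x-y)\cdot v$ inside the integral in \eqref{e:derivata2}. Proposition~\ref{p:D_x frequency} as stated only yields a bound on the raw oscillation $|I_u(x,Rr)-I_u(y,Rr)|$ by $\sqrt{\Delta(x)}+\sqrt{\Delta(y)}$ with no $(x-y)\cdot v$ dependence, which is insufficient because it would scale as $r$, not as $|(x-y)\cdot v|$. The refinement---keeping a fixed direction $v$ in the variation and tracking $(x-y)\cdot v$ through the Cauchy--Schwarz step---is the substance of the DMSV adaptation of Naber--Valtorta, and it is precisely the technical ingredient that distinguishes the mean flatness estimate from the cruder spatial oscillation estimate already established in Proposition~\ref{p:D_x frequency}.
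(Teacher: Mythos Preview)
Your proposal contains a genuine gap: the mechanism you describe for extracting the factor $(x-y)\cdot v$ does not work. In the derivation of \eqref{e:derivata2}, the identity
\[
\partial_e u(z)=\bigl(I_u(x_1,t)-I_u(x_2,t)\bigr)u(z)+\mathcal E_1(z)-\mathcal E_2(z)
\]
holds \emph{only} because $e=x_2-x_1$; if you replace $e$ by a fixed unit vector $v$ unrelated to the two base points, $\partial_v u=\nabla u\cdot v$ has no such decomposition, and the whole computation collapses. Moreover, integrating $\partial_v I_u(\gamma(t),Rr)$ along $\gamma(t)=(1-t)y+tx$ does \emph{not} produce $(x-y)\cdot v$: what telescopes is $\int_0^1\partial_{x-y}I_u(\gamma(t),Rr)\,\d t=I_u(x,Rr)-I_u(y,Rr)$, with derivative in the direction $x-y$, not $v$. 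There is no pairing of $\gamma'(t)$ with ``the gradient'' in the line integral of a directional derivative. Consequently your claimed pointwise inequality $|(x-y)\cdot v|\le Cr\bigl(\Delta(x)^{1/2}+\Delta(y)^{1/2}\bigr)$ has no derivation along the lines you sketch; to isolate $(x-y)\cdot v$ this way you would need a \emph{lower} bound on $\int_0^1\partial_v I_u(\gamma(t),Rr)\,\d t$, for which there is no reason.

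The paper's argument is structurally different. The geometric factor enters through the eigenvalue identity \eqref{e:identita chiave}: dotting $-\lambda_i v_i=\int_{B_r(p)}((x-\bar x)\cdot v_i)(\bar x-x)\,\d\mu(x)$ with $\nabla u(z)$ and inserting the harmless terms $z\cdot\nabla u(z)$ and $\alpha u(z)$ (which integrate to zero against $(x-\bar x)\cdot v_i$) gives $\lambda_i\,v_i\cdot\nabla u(z)$ as an integral of $((x-\bar x)\cdot v_i)\bigl((z-x)\cdot\nabla u(z)-\alpha u(z)\bigr)\,\d\mu(x)$. Cauchy--Schwarz then yields $\lambda_i|v_i\cdot\nabla u(z)|^2\le\int((z-x)\cdot\nabla u(z)-\alpha u(z))^2\d\mu(x)$. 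Summing over $i\le n$ and integrating $z$ over an annulus leaves $\lambda_n\int|\nabla_\tau u|^2$ on the left. The second essential ingredient you omit is a \emph{lower} bound $\int_{\text{annulus}}|\nabla_\tau u|^2\ge c\,D_u$, proved by a compactness/contradiction argument exploiting that the only solutions depending on $x_{n+1}$ alone have frequency $2s<1+s$. This is what lets you divide out and recover $\lambda_n=r^{n+1}\beta_\mu^2$. Proposition~\ref{p:D_x frequency} is used afterwards, only to control the deviation of $I_u(x,(R-1)r)$ from its $\mu$-average $\alpha$, not to generate the $\beta$-number itself.
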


\begin{proof}
\noindent{\bf 1.} 
We can assume without loss of generality that
$x_0=0$ and that $p \in \Gamma(u) \cap B_r$
is such that $\mu(B_r(p)) >0$ (otherwise, there is nothing to prove).
Let $\bar x = \bar x_{p,r}$ be the barycenter of $\mu$ in 
$B_{r}(p)$
and let $\{v_1, \ldots, v_{n+1}\}$ be any diagonalizing basis for the
bilinear form ${\bf B}_p$ introduced in \S~\ref{s:mean-flatness},
with corresponding eigenvalues $0\leq\lambda_{n+1} \leq \lambda_{n} 
\leq \cdots \leq\lambda_1$.
Note that, since by assumption $\spt(\mu) \subset \Gamma(u)
\subset\R^n\times\{0\}$,
we can assume without loss of generality that $v_{n+1} = e_{n+1}$,
$\lambda_{n+1} = 0$ and hence
$\beta_{\mu} (p,r) = (r^{-n-1}\lambda_{n})^{\sfrac12}$ by 
\eqref{e:beta-charac}. Therefore, without loss of generality
we may also assume that $\lambda_n>0$.

From \eqref{e:identita chiave} and the definition of barycenter
we deduce that, for every $i\in\{1, \ldots, n\}$, for every
$z \in B_{(2R+5)r}$
and for every constant $\alpha \in \R$, we have
\begin{equation*}
-\lambda_i\,v_i \cdot \nabla u(z) = 
\int_{B_{r}(p)} 
\big((x-\bar x_{}) \cdot v_i\big)\,\big((z-x) \cdot \nabla u(z) - 
\alpha\,u(z)\big)\d\mu(x).
\end{equation*}
For the rest of the proof we set
\[
\alpha:= \frac{1}{\mu(B_{r})} \int_{B_{r}(p)} I_u(x, 
(R-1)\,r)\d\mu(x).
\]
Using H\"older inequality we deduce that
\begin{align}
\lambda_i^2 |v_i \cdot \nabla u(z)|^2
& \leq 
\int_{B_{r}(p)} \big((x-\bar x_{}) \cdot v_i\big)^2\d\mu(x) \;\;
\int_{B_{r}(p)} \big((z-x) \cdot \nabla u(z) - \alpha\,u(z)\big)^2
\d\mu(x)\notag\\
& \hspace{-0.4cm}\stackrel{\S\ref{s:mean-flatness}\,\textup{(ii)}}{=} 
\lambda_i\,\int_{B_{r}(p)}
 \big((z-x) \cdot \nabla u(z) - \alpha\,u(z)\big)^2\d\mu(x).
\label{e:controllo autovalori}
\end{align}
Denoting with 
$\nabla_\tau u = (\de_1 u, \ldots, \de_n u)$
the tangential gradient, and recalling that 
\[
r^{n+1}\beta^2_{\mu} (p,r) = \lambda_{n}
\and
0=\lambda_{n+1}< \lambda_n\leq\ldots\leq\lambda_1,
\]
by integrating over $B_{(R+1)r}(p)\setminus B_{Rr}(p)$
the previous inequality with respect to the measure
$|z_{n+1}|^a\cL^{n+1}$
we get
\begin{align}\label{e:intermedio}
r^{n+1}\beta_{\mu}^2(p,r) &\int_{B_{(R+1)r}(p)
\setminus B_{Rr}(p)}
|\nabla_\tau u(z)|^2|z_{n+1}|^a\d z
= \lambda_{n} 
\int_{B_{(R+1)r}(p) \setminus B_{Rr}(p)}
|\nabla_\tau u(z)|^2|z_{n+1}|^a\d z \notag\\
& \leq \int_{B_{(R+1)r}(p) \setminus B_{Rr}(p)}
\sum_{i=1}^{n} \lambda_i|v_i \cdot \nabla u(z)|^2|z_{n+1}|^a\,\d z
\notag\\
&\stackrel{\eqref{e:controllo autovalori}}{\leq}
n
\int_{B_{(R+1)r}(p) \setminus B_{Rr}(p)}
\int_{B_{r}(p)} \big((z-x) \cdot \nabla u(z) - 
\alpha\,u(z)\big)^2\d\mu(x)\,|z_{n+1}|^a\d z
\notag\\
& \leq n\int_{B_{r}(p)} \int_{B_{(R+2)r}(x) \setminus 
B_{(R-1)r}(x)}
 \big((z-x) \cdot \nabla u(z) - \alpha\,u(z)\big)^2|z_{n+1}|^a\d z\;\d\mu(x).
\end{align}
Next we estimate the two sides of \eqref{e:intermedio}.

\medskip

\noindent{\bf 2.} For what concerns the
l.h.s. of \eqref{e:intermedio}, we claim the following: 
there exists a constant $C=C(A,R)>0$ such that
\begin{align}
D_u(p,(R+2)r)
\leq C\int_{B_{(R+1)r}(p)\setminus B_{Rr}(p)}
|\nabla_\tau u(z)|^2|z_{n+1}|^a\d z.
\label{e:stima dal basso}
\end{align}
We argue by contradiction. If the claim were not true,
after a suitable rescaling 
replacing $u$ with $u_{p,r}$, we could find a 
sequence of solutions
$u_k$ to the thin obstacle problem in $B_{2R+4}$
with $0\in \Gamma(u_k)$,
such that $I_{u_k}(R+3)\leq A':= A+C_{\ref{l:lim uniforme}}(A)$,
$H_{u_k}(R+2) = 1$
and
\[
\int_{B_{R+1}\setminus B_{R}}
|\nabla_\tau u_k(z)|^2|z_{n+1}|^a\d z
\leq \frac{D_{u_k}(R+2)}{k},
\]
(recall that $B_{(2R+4)r}(p)\subset B_{(4R+10)r}$
and by Lemma~\ref{l:lim uniforme}
we have $I_u(p,(R+3)r)\leq A'$).
By Corollary~\ref{c:H} we have that
$H_{u_k}(R+3) \leq \left(\sfrac{(R+3)}{(R+2)}\right)^{n+a+2A'}$
and hence by Corollary~\ref{c:compactness},
(up to subsequences, not relabeled) $u_k$ converge in 
$H^1_{}(B_{R+2},|x_{n+1}|^a\cL^{n+1})$
to a solution $u_0$ to the thin obstacle problem in $B_{R+2}$
with $H_{u_0}(R+2) = 1$ and
\[
\int_{B_{R+1}\setminus B_{R}}
|\nabla_\tau u_0(z)|^2|z_{n+1}|^a\d z = 0.
\]
We deduce from the latter equality that
$u_0$ depends only on the variable $x_{n+1}$
(recall that $u_0$ is analytic in $B_1^+$).
In particular, $u_0 (x) = - c\,|x_{n+1}|^{2s}$ for some  $c>0$,
and 
\[
I_{u_0}(t) = 2s < 1+s \leq I_{u_k}(t)\quad\forall
\;t \in (0, R+2) ,\;\forall\; k\in\N,
\]
where we used Lemma~\ref{l:freq lower}.
This contradicts 
$\lim_k I_{u_k}(t)=I_{u_0}(t)$ and concludes the
proof of \eqref{e:stima dal basso}.

\medskip

\noindent{\bf 3.} 
Now we estimate the r.h.s. of \eqref{e:intermedio} from above.
By the triangular inequality we have that
\begin{align*}
&\text{r.h.s. of \eqref{e:intermedio}}\leq\\
&\quad \leq 2n\int_{B_{r}(p)}\int_{B_{(R+2)r}(x) \setminus 
B_{(R-1)r}(x)}
\big((z-x) \cdot \nabla u(z) - 
I_u(x,(R-1)r)\,u(z)\big)^2|z_{n+1}|^a\,\d z\;\d\mu(x)\notag\\
& \qquad + 2n \int_{B_{r}(p)}
\int_{B_{(R+2)r}(x) \setminus 
B_{(R-1)r}(x)}
\Big(I_u(x,(R-1)r) - \alpha \Big)^2 \,u^2(z)|z_{n+1}|^a\,\d 
z\;\d\mu(x).\notag
\end{align*}
For every $x\in\spt(\mu) \cap B_r(p)$, 
\eqref{e:I limitato} in Lemma~\ref{l:lim uniforme} yields
\[
I_u(x,(R+2)r)\leq I_u(0,(R+2)r) + C_{\ref{l:lim 
uniforme}}(A) \leq A+  C_{\ref{l:lim 
uniforme}}(A) = A',
\]
since $B_{r}(p)\subseteq B_{2r}$
and $u$ is defined on $B_{(2R+4)r}(p)\subset 
B_{(4R+10)r}$.
By using Lemma~\ref{l:monotonia}, we can estimate the 
first integral above as follows:
\begin{align}
&\int_{B_{(R+2)r}(x) \setminus 
B_{(R-1)r}(x)}
\big((z-x) \cdot \nabla u(z) - 
I_u(x,(R-1)r)\,u(z)\big)^2|z_{n+1}|^a\,\d 
z\notag\\
&\quad\leq (R+2)\,r
\int_{B_{(R+2)r}(x) \setminus 
B_{(R-1)r}(x)}
\big((z-x) \cdot \nabla u(z) - 
I_u(x,(R-1)r)\,u(z)\big)^2
\,\frac{|z_{n+1}|^a}{|z-x|}\,\d z\notag\\
&\quad\stackrel{\eqref{e:monotonia con resto}}{\leq }
C_{\ref{l:monotonia}}(A')\,(R+2)\,r 
\,H\big(x,(2R+4)r\big)\,
\Delta^{(2R+4)r}_{(R-1)r}(x)\notag\\
&\quad\leq 
C(A)\,(R+2)\,r \,H\big(p,(2R+4)r\big)\,
\Delta^{(2R+4)r}_{(R-1)r}(x),
\label{e:stima dall'alto1}
\end{align}
in the last inequality we have used Lemma~\ref{l:lim uniforme}
(because $|p-x|<r$ and $u$ is defined in $B_{(4R+8)r}(p)\subset 
B_{(4R+10)r}$).
On the other hand, using Jensen's inequality 
and Proposition~\ref{p:D_x frequency} (recall that 
$\spt(\mu)\subseteq \Gamma(u)$), we deduce that
\begin{align}
\int_{B_r(p)}&\Big(I_u(x,(R-1)r) - \alpha \Big)^2\,\d \mu(x) 
\notag\\
& \leq
\frac{1}{\mu(B_{r}(p))} \int_{B_{r}(p)} \int_{B_{r}(p)}
\Big( I_u\big(x,(R-1)r\big) -  I_u\big(y, 
(R-1)r\big)\Big)^2 \d \mu(y)\;\d\mu(x)\notag\\
&\stackrel{\eqref{e:D_x frequency}}{\leq}
\frac{2\,C^2_{\ref{p:D_x frequency}}(A')}{\mu(B_{r}(p))}
\int_{B_{r}(p)} \int_{B_{r}(p)}
\Big(\Delta^{2(R+1)r}_{(R-5)\sfrac{r}{2}}(x) + 
\Delta^{2(R+1)r}_{(R-5)\sfrac{r}{2}}(y)\Big)\,
\d \mu(y)\;\d\mu(x)\notag\\
& \leq C 
\int_{B_{r}(p)}\Delta^{2(R+1)r}_{(R-5)\sfrac{r}{2}}(x)\, 
\d \mu(x).
\label{e:stima dall'alto2}
\end{align}
Finally, note that
\begin{align}
\int_{B_{(R+2)r}(x) \setminus 
B_{(R-1)r}(x)} u^2(z)|z_{n+1}|^a\,\d z &
\stackrel{\eqref{e:L2 vs H}}{\leq}
(R+2)\,r\,H_u\big(x, (R+2)r \big)\notag\\
&\stackrel{\eqref{e:H limitato}}{\leq} 
C\,(R+2)\,r\,H_u\big(p,(R+2)r \big),
\label{e:stima dall'alto3}
\end{align}
where once again in the last inequality we 
have used Lemma~\ref{l:lim uniforme}.
\medskip

\noindent{\bf 4.}
We can now collect the estimates \eqref{e:stima dal basso} --
\eqref{e:stima dall'alto3} to get
\begin{align*}
r^{n+1}\beta_{\mu}^2(p,r) D_u(p,(R+2)r)
&\leq C(A,R)\,r \,H_u\big(p, (2R+4)r\big)\,
\int_{B_{r}(p)}
\Delta^{2(R+1)r}_{(R-5)\sfrac{r}{2}}(x)\, \d \mu(x)\\
&\stackrel{\eqref{e:monotonia H}}{\leq} C(A,R)\,r \,H_u\big(p, 
(R+2)r\big)\,
\int_{B_{r}(p)}
\Delta^{2(R+1)r}_{(R-5)\sfrac{r}{2}}(x)\, \d \mu(x).
\end{align*}
From $I_u(p,(R+2)r) \geq 1+s$ (cf. Corollary~\ref{c:minima freq}),
one can then infer \eqref{e:mean-flatness vs freq}.
\end{proof}

%
%
\section{Homogeneous and almost homogeneous solutions}\label{s:rigidity}
For the proof of the main theorems, we need to discuss 
some results concerning homogeneous and
almost homogeneous solutions
to the thin obstacle problem \eqref{e:ob-pb local}.

\subsection{Spines of homogeneous solutions}
We denote by $\cH_\lambda$ the space of all (non-trivial) 
$\lambda$-homogeneous solutions to the thin obstacle problem 
\eqref{e:ob-pb local},
\[
\cH_{\lambda} := \Big\{ u\in H^1_\loc\big(\R^{n+1}, 
|x_{n+1}|^a\,\cL^{n+1}\big)\setminus\{0\}
:\, u(x) = |x|^\lambda\,u\big(\sfrac{x}{|x|}\big),\text{ 
$u\vert_{B_1}$ solves \eqref{e:ob-pb local}}\Big\},
\]
and set
$\cH:= \bigcup_{\lambda\geq 1+s}\cH_{\lambda}$.
The restriction $\lambda\geq 1+s$ is imposed in view of 
Corollary~\ref{c:minima freq}.
Recall next the definition of spine of homogeneous solutions (see, 
\textit{e.g.}, \cite{FMS-15}).

\begin{definition}\label{d:spine}
Let $u \in \cH$ be a homogeneous solution. The \textit{spine} $S(u)$
is the maximal subspace of invariance of $u$:
\[
S(u) := \Big\{ y\in\R^n\times \{0\}\;:\; u(x+y) = u(x) \quad \forall\; x\in 
\R^{n+1}\Big\}.
\]
\end{definition}

Simple characterizations of the spine are provided
in the next result.

\begin{lemma}\label{l:spine}
Let $u \in \cH$ be given. The following are equivalent conditions:
\begin{itemize}
\item[(i)] $x \in S(u)$,
\item[(ii)] $u$ is homogeneous with respect to $x$, 
{\ie}~$I_u(x,r) = I_u(x, 0^+)$ for all $r>0$,
\item[(iii)] $I_u(x,  0^+) = I_u(0,  0^+)$.
\end{itemize}
\end{lemma}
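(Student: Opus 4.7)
The plan is to close the loop (i)$\Rightarrow$(ii)$\Rightarrow$(iii)$\Rightarrow$(i). Throughout, I set $\lambda := I_u(0,0^+)\geq 1+s$, so that by the equality case of Proposition~\ref{p:monotonia+lower} the $\lambda$-homogeneity of $u$ forces $I_u(0,r)\equiv\lambda$. Remark~\ref{r:scaling} applied to the $\lambda$-homogeneous function $u$ also yields the scaling identity
\[
I_u(x,r) = I_u(r^{-1}x,\, 1) \qquad \forall\; r>0,
\]
which I use throughout to compare frequencies at different centers and scales.

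For (i)$\Rightarrow$(ii), since $S(u)$ is a linear subspace both $x$ and $-x$ lie in it, and combining translation invariance with the $\lambda$-homogeneity with respect to $0$ gives, for every $y\in\R^{n+1}$ and $t>0$,
\[
u(x + t(y-x)) = u(t(y-x)) = t^\lambda\, u(y-x) = t^\lambda\, u(y),
\]
i.e.\ $u$ is $\lambda$-homogeneous with respect to $x$. Plugging the corresponding Euler identity $\nabla u(z)\cdot(z-x) = \lambda\, u(z)$ directly into \eqref{e:D}--\eqref{e:Dprime} forces $I_u(x,r)\equiv \lambda = I_u(x,0^+)$.

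For (ii)$\Rightarrow$(iii), I set $\nu := I_u(x,0^+)$. The equality case of Proposition~\ref{p:monotonia+lower} makes $u$ $\nu$-homogeneous with respect to $x$, so the two Euler identities hold classically on $\{y_{n+1}\neq 0\}$ by analyticity of $u$ there; subtracting them gives $\nabla u(y)\cdot x = (\lambda-\nu)\,u(y)$ on that open set. Since $x\in\R^n\times\{0\}$, the line $t\mapsto y+tx$ never leaves $\{y_{n+1}\neq 0\}$ whenever $y_{n+1}\neq 0$, and integrating this ODE along it yields $u(y+tx) = u(y)\,e^{(\lambda-\nu)\,t}$. This exponential behaviour is incompatible with the polynomial bound $|u(y+tx)|\leq C(1+|t|)^\lambda$ coming from the $\lambda$-homogeneity with respect to $0$, unless $\lambda=\nu$; since $u\not\equiv 0$ by assumption, this gives (iii).

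For (iii)$\Rightarrow$(i), monotonicity gives $I_u(x,r)\geq I_u(x,0^+)=\lambda$; on the other hand, by the scaling identity and the continuity of $y\mapsto I_u(y,1)=D_u(y,1)/H_u(y,1)$ near $0$ (we have $H_u(0,1)>0$ by unique continuation, as in the proof of Corollary~\ref{c:compactness}),
\[
\lim_{r\to\infty} I_u(x,r) \;=\; \lim_{t\to 0^+} I_u(tx,1) \;=\; I_u(0,1) \;=\; \lambda.
\]
Hence $I_u(x,r)\equiv \lambda$, and Proposition~\ref{p:monotonia+lower} forces $u$ to be $\lambda$-homogeneous with respect to $x$; repeating the Euler-identity subtraction with $\nu=\lambda$ gives $\nabla u(y)\cdot x\equiv 0$ on $\{y_{n+1}\neq 0\}$, whence $u(y+tx)=u(y)$ there, and by continuity of $u$ on all of $\R^{n+1}$, which is (i). I expect the genuine obstacle to be (ii)$\Rightarrow$(iii): it is the step where one has to convert a purely frequency-theoretic statement into the algebraic rigidity of two homogeneities, via the exponential-versus-polynomial growth dichotomy.
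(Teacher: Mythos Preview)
Your proof is correct. It differs from the paper's mainly at two points, and both differences are instructive.

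For (ii)$\Rightarrow$(iii), the paper proceeds via rescaling/blow-up at infinity: since $u$ is $\lambda$-homogeneous one has $I_u(x,r)=I_{u_{0,r}}(\sfrac{x}{r},1)$, and letting $r\to\infty$ (with (ii) in force) gives $I_u(x,0^+)=I_u(0,0^+)$ by continuity of the frequency in the base point. You instead subtract the two Euler identities to get $\nabla u(y)\cdot x=(\lambda-\nu)u(y)$ on $\{y_{n+1}\neq 0\}$, integrate to $u(y+tx)=u(y)e^{(\lambda-\nu)t}$, and rule out $\lambda\neq\nu$ by the polynomial growth forced by the $\lambda$-homogeneity at the origin. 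This is a genuinely different, more ``hard analysis'' route; it avoids any compactness and is self-contained once smoothness off $\{x_{n+1}=0\}$ is known.

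For the passage from double homogeneity (with the same degree $\lambda$) to invariance, the paper uses the short algebraic identity $u(y)=2^\lambda u\big(x+\tfrac{y-x}{2}\big)=2^\lambda u\big(\tfrac{y+x}{2}\big)=u(y+x)$, while you subtract the Euler identities again (now with $\nu=\lambda$) to get $\nabla u\cdot x\equiv 0$ and hence $u(y+tx)=u(y)$. Both arguments are equally short; yours has the virtue of reusing the same mechanism already set up for (ii)$\Rightarrow$(iii).

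One bookkeeping remark: Proposition~\ref{p:monotonia+lower} (both the monotonicity and the equality case) is stated for base points in $\Gamma(u)$, but you invoke it for arbitrary $x\in\R^n\times\{0\}$. The proof of that proposition only uses the variational identities \eqref{e:D}--\eqref{e:Dprime}, which hold for any $x_0\in B_R'$, so the extension is legitimate; it would be worth saying this explicitly.
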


\begin{proof}
The very definition of spine yields straightforwardly that (i) implies (ii) and 
(iii). To see that (ii) implies (iii),
we consider the functions $u_{0,r_k}$ 
as defined in \eqref{e:rescaling0},
for a sequence of radii $r_k\uparrow +\infty$
such that $u_{0,r_k}$ converge to some $u_{\infty}$
in $H^1_{\loc}(\R^{n+1},|x_{n+1}|^a\cL^{n+1})$.
Then, by Remark~\ref{r:scaling}
we infer that
\begin{align*}
\left\vert I_u(x,  0^+) - I_u(0,0^+)\right\vert 
&\stackrel{\textup{(ii)}}{=}
\lim_{k\to+\infty}\left\vert I_u(x, r_k) - I_u(0,r_k)\right\vert
=\lim_{k\to+\infty}\left\vert I_{u_{0,r_k}}(\sfrac{x}{r_k}, 1) - 
I_{u_{0,r_k}}(0,1)\right\vert\\
&= \left\vert I_{u_{\infty}}(0,1) - 
I_{u_{\infty}}(0,1)\right\vert = 0.
\end{align*}
Similarly, (iii) implies (ii): let $r_k\uparrow+\infty$ be
a sequence as above, then using $u\in\cH$ we get
\begin{align*}
\lim_{k\to+\infty}\left\vert I_u(x,  r_k) - I_u(x,  0^+)\right\vert
 &\leq \lim_{k\to+\infty}
\left\vert I_u(x,  r_k) - I_u(0,r_k)\right\vert
+\left\vert I_u(x, 0^+) - I_u(0,0^+)\right\vert\\
&\stackrel{\textup{(iii)}}{=}
\lim_{k\to+\infty}
\left\vert I_u(x,  r_k) - I_u(0,r_k)\right\vert =0.
\end{align*}
In particular, taking into account the monotonicity of the frequency,
we infer that $I_u(x,r)=I_u(x,0^+)$ for every $r>0$, {\ie}~(ii).
Finally, we are left to show that (ii) and (iii) imply (i). 
By (ii) and (iii) we have that 
 \[
u(y + x) = t^\lambda\, u\left(\frac{y}{t}+x\right)\quad\forall\; 
y\in\R^n,\;\forall\,t>0,
\]
with $\lambda = I_u(0, 0^+)$. Hence, for every $y\in\R^n\times\{0\}$ 
we have
\begin{align*}
u(y) = u(x + y-x) = 2^\lambda\, u \big( x +\sfrac{(y-x)}2\big) 
= 2^\lambda \,u \big( \sfrac{(y+x)}2\big) = u(y+x).
&\qedhere
\end{align*}
\end{proof}

\subsection{Classification of solutions with maximal dimension of the 
spine}\label{s:classification}
There are no homogeneous functions $u \in \cH$ with spine of dimension $n$, 
because the only non-trivial solutions of \eqref{e:ob-pb local} even with respect to 
$x_{n+1}$ and depending only on the variable $x_{n+1}$
are of the form $c\,|x_{n+1}|^{2s}$ with $c < 0$. The 
latter functions 
have homogeneity $2s < 1+s$, that is not allowed for functions in $\cH$.
Therefore, the maximal dimension of the spine of a function in $\cH$ is at most
$n-1$. 
We say that $u\in\cH^\top$ if $u \in \cH$ and $\dim S(u) = n-1$,
and we set $\cH^\low:=\cH\setminus \cH^\top$ otherwise.
All functions in $\cH^\top$ are classified in the following list.

\begin{lemma}\label{l:classification}
$u\in \cH^\top$ if and only if there
exists a uniquely determined $\lambda$-homogeneous function
$h_\lambda:\R^2\to \R$, with 
$\lambda \in\{2m, 2m-1+s, 2m +2s\}_{m\in\N\setminus\{0\}}$, 
such that
\[
u(x) = c\,h_{\lambda}(x\cdot e, x_{n+1})
\and 
H_{h_\lambda(x_1,x_{n+1})}(0,1)=1,
\]
for some $c>0$ and $e\in \R^{n}\times \{0\}$ with $|e|=1$.
In particular, if $u\in \cH^\top$ then $\mathcal{N}(u) =S(u)$, 
and more precisely: if $u(x) = c\,h_{\lambda}(x\cdot e, x_{n+1})$, then
\begin{itemize}
\item[(I)] if $\lambda=2m$: $\Lambda(u) = \Gamma(u) = \mathcal{N}(u) =
S(u) = \big\{x\cdot e = 
x_{n+1}=0\big\}$;
\item[(II)] if $\lambda = 2m-1+s$: $\Lambda(u) =\big\{x\cdot e \leq 0,\, 
x_{n+1}=0\big\}$ and
$\Gamma(u) = \mathcal{N}(u) = S(u) = \big\{x\cdot e = x_{n+1}=0\big\}$;
\item[(III)] if $\lambda = 2m+2s$: $\Lambda(u) =\big\{x_{n+1}=0\big\}$,
$\Gamma(u) = \emptyset$ and $\mathcal{N}(u) =S(u) = \big\{x\cdot e = 
x_{n+1}=0\big\}$.
\end{itemize}
\end{lemma}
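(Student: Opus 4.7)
The plan is to reduce the classification to the two-dimensional case and then invoke the explicit classification of two-dimensional homogeneous solutions of the thin obstacle problem developed in Appendix~\ref{a:appendix}. The ``if'' direction is immediate: given any $h_\lambda$ from the list and $e \in \R^n\times\{0\}$ with $|e|=1$, the function $u(x) = c\,h_\lambda(x\cdot e, x_{n+1})$ is $\lambda$-homogeneous, even in $x_{n+1}$, solves \eqref{e:ob-pb local}, and is invariant along the $(n-1)$-dimensional subspace $e^\perp \cap (\R^n\times\{0\})$. Hence $u \in \cH$ and $\dim S(u) \geq n-1$; by the observation recalled at the beginning of \S\ref{s:classification} (no $u \in \cH$ can have spine of dimension $n$), this forces $\dim S(u) = n-1$.

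For the ``only if'' direction, I would start from $u \in \cH^\top$ and exploit $\dim S(u) = n-1$ together with the inclusion $S(u) \subseteq \R^n \times \{0\}$ coming from the definition of the spine. Choose $e \in \R^n\times\{0\}$ of unit norm orthogonal to $S(u)$ inside $\R^n\times\{0\}$; after a horizontal rotation mapping $e$ to $e_1$, the $S(u)$-invariance gives $u(x) = h(x_1, x_{n+1})$ for a function $h:\R^2\to\R$ that is $\lambda$-homogeneous, even in the second variable, nonnegative on $\{x_2 = 0\}$, and solves the thin obstacle problem in $\R^2$. The normalization $H_{h_\lambda}(0,1) = 1$ fixes the positive multiplicative constant $c$.

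At this point the classification becomes purely two-dimensional. Writing $h(\rho,\theta)=\rho^\lambda\psi(\theta)$ in polar coordinates on $\R^2$, the function $\psi$ satisfies a linear second-order ODE on $(0,\pi)$ coming from $\div(|x_2|^a\nabla h)=0$, together with Signorini-type complementarity conditions at $\theta=0,\pi$ and the evenness condition across the horizontal axis. The detailed analysis in Appendix~\ref{a:appendix} shows that the admissible homogeneities form exactly $\{2m,\,2m-1+s,\,2m+2s\}_{m\in\N\setminus\{0\}}$, and that in each case $\psi$ is uniquely determined up to a positive multiplicative constant, giving the uniquely determined profile $h_\lambda$ after normalization. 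The three families correspond to three qualitatively different behaviors of the trace $\psi(0) = \psi(\pi)$: nontrivial and vanishing only at the origin (giving (I)), vanishing on one half-line and positive on the other (giving (II)), or vanishing identically with a strictly negative weighted normal derivative (giving (III)).

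Finally, I would transfer the coincidence, free boundary, and nodal structures back to $u$ via the formula $u(x) = c\,h_\lambda(x\cdot e, x_{n+1})$. In each case $\Lambda(u)$, $\Gamma(u)$ and $\cN(u)$ are directly read off from the two-dimensional picture, and the equality $\cN(u)=S(u) = \{x\cdot e = x_{n+1}=0\}$ follows because $\cN(u)$ must contain the invariant subspace $S(u)$ and, by the explicit form of $h_\lambda$, cannot contain points outside $\{x\cdot e = 0\}$. The main obstacle is the ODE classification together with the Signorini complementarity, which is where the specific arithmetic progression of admissible exponents appears; this core computation is the content of Appendix~\ref{a:appendix}, and the argument given here is essentially a symmetry reduction plus bookkeeping.
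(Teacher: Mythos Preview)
Your proposal is correct and follows essentially the same route as the paper: reduce to two dimensions via the $(n-1)$-dimensional spine, invoke the two-dimensional classification (Proposition~\ref{p:classiFICAzione 2d} in Appendix~\ref{a:appendix}) to obtain the admissible homogeneities and the explicit profiles $h_\lambda$, and then read off $\Lambda(u)$, $\Gamma(u)$, $\mathcal{N}(u)$, $S(u)$ from those formulas. The paper's own proof (given in the Appendix, with the case $s=\sfrac12$ also spelled out directly after the statement via the elementary trigonometric integral of the ODE) is exactly this reduction-plus-bookkeeping argument.
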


The proof of Lemma~\ref{l:classification} is a consequence
of the full characterization of the homogeneous
solutions $v:\R^2\to\R$ to the thin obstacle problem.
Introducing polar co-ordinates $v(\rho\cos\theta, \rho\sin\theta) = 
\rho^\lambda\,y(\theta)$ with $y:[0,\pi]\to\R$,
the system \eqref{e:ob-pb local} can be written in the form:
\begin{equation}\label{e:polar coord}
y''(\theta) + a\ctg\theta\,y'(\theta) 
+\lambda\,(\lambda +a)\,y(\theta) = 0,
\end{equation}
with the following four possible boundary conditions:
\begin{gather}
y(0)>0,\quad
y(\pi)>0
 \quad\text{and} \quad
\lim_{\theta\downarrow0^+} (\sin\theta)^{1-2s}y'(\theta)=
\lim_{\theta\downarrow\pi^-} (\sin\theta)^{1-2s}y'(\theta) =0,
\label{e:polar bd I}\\
y(0)=0 <y(\pi),\quad
\lim_{\theta\downarrow0} (\sin\theta)^{1-2s}y'(\theta) \leq 0
\quad\text{and} \quad 
\lim_{\theta\uparrow\pi} (\sin\theta)^{1-2s}y'(\theta)=0,
\label{e:polar bd II}\\
y(\pi)=0 <y(0),\quad
\lim_{\theta\downarrow0} (\sin\theta)^{1-2s}y'(\theta) = 0
\quad\text{and} \quad 
\lim_{\theta\uparrow\pi} (\sin\theta)^{1-2s}y'(\theta)\geq 0,
\label{e:polar bd IIbis}\\
y(0)=y(\pi)=0,\quad \lim_{\theta\downarrow0}
 (\sin\theta)^{1-2s}y'(\theta) \leq 0
\quad\text{and} \quad
\lim_{\theta\uparrow\pi}(\sin\theta)^{1-2s}y'(\theta) \geq 0.
\label{e:polar bd III}
\end{gather}
The four cases \eqref{e:polar bd I} -- \eqref{e:polar bd III} 
determine
the corresponding exponents $\lambda$ and
the solutions $h_{\lambda}$ as in (I), (II), (III) 
of the lemma.
The proof in general requires the use of the associated Legendre functions and 
is 
postponed to the appendix where we establish
also other complementary results 
that are mandatory
for the analysis in Section~\ref{s:blow-up} (cf. 
Proposition~\ref{p:classiFICAzione 2d}).
Here we give the details for the simplest case of the Signorini problem
$s=\sfrac12$, {\ie}~$a=0$.

\begin{proof}[Proof of Lemma \ref{l:classification} for $s=\sfrac12$.]
If $a=0$, the general integral of \eqref{e:polar coord} is 
\[y(\theta) = A_1\,\cos(\lambda\theta) + A_2\,\sin(\lambda\theta),
\]
with $A_1,A_2\in\R$. 
We can then discuss the four possible cases.
\begin{itemize}
\item[(I)] For \eqref{e:polar bd I} we have that $y'(0)=0$ implies $A_2=0$ and 
$A_1\neq 0$,
and $y'(\pi)=0$ gives $\lambda\in \N$. Considering that
$y(0)>0$ we find $A_1>0$, and by $y(\pi)>0$ one gets $\lambda = 2m$ with $m 
\in\N\setminus\{0\}$.

\item[(II)]
For \eqref{e:polar bd II}, we have that 
$y(0)=0$ gives $A_1=0$ and $A_2\neq 0$.
In turn $y'(0)\leq 0$ implies $A_2<0$.
Thus, $y'(\pi)=0$ yields $\cos(\lambda\pi) = 0$, {\ie}~$\lambda=m+\sfrac12$
for $m \in\N\setminus\{0\}$, and finally $m$ is odd since $y(\pi)>0$. 
One proceeds analogously in case \eqref{e:polar bd IIbis}.

\item[(III)] Finally, for \eqref{e:polar bd III}, we have that
$y(0)=y(\pi)=0$ implies $A_1=0$, $A_2\neq 0$ and $\lambda \in\N$.
Considering that $y'(0)\leq 0 \leq y'(\pi)$ we conclude that $A_2<0$ and  
$\lambda$ odd.
\end{itemize}
In all the cases the nonzero coefficient $A_i$
is chosen suitably in order 
to satisfy the normalization condition $H_{h_\lambda}(1)=1$.
The statements concerning $\Gamma(u)$, $\Lambda(u)$, 
$\mathcal{N}(u)$ and $S(u)$
are now direct consequences of the explicit 
formulas for the solutions.
\end{proof}

For the lowest frequency $1+s$, actually all
homogeneous solutions have maximal spine
as proved by Caffarelli, Salsa and Silvestre in~\cite{CaSaSi08}, 
this result can be equivalently stated by the inclusion
\begin{equation}\label{e:H1+s}
 \cH_{1+s}\subseteq\cH^{top}.
\end{equation}

\subsection{Almost homogeneous solutions}

We next introduce the notion of almost homogeneous solutions.

\begin{definition}\label{d:almost hom}
Let $\eta>0$ and $R>1$ be given constants.
A solution $u:B_R\to\R$ to thin obstacle problem
\eqref{e:ob-pb local} is called
\textit{$\eta$-almost homogeneous} if $0\in \Gamma(u)$ 
and
\[
I_u(1) - I_u\big(\sfrac12\big)\leq \eta.
\]
\end{definition}

The following lemma justifies this terminology.

\begin{lemma}\label{l:almost hom}
For every $\delta,A>0$ and $R>1$ there
exists $\eta>0$ with the following property:
let $u$ be a $\eta$-almost homogeneous solution
with $I_u(R)\leq A$ and $H_u(R)=1$; then, there exists
a homogeneous solution $w\in \cH$ such that
\begin{align}\label{e:almost hom}
\big\| u - w\big\|_{H^1(B_{R-1}, |x_{n+1}|^a\cL^{n+1})}
\leq \delta.
\end{align}
\end{lemma}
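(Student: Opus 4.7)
The plan is to argue by contradiction, using the compactness from Corollary \ref{c:compactness} together with the rigidity characterization of homogeneity in Proposition \ref{p:monotonia+lower}. Suppose the claim fails for some $\delta_0, A_0 > 0$ and $R_0 > 1$: then there is a sequence $(u_k)_{k\in\N}$ of $\eta_k$-almost homogeneous solutions in $B_{R_0}$, with $\eta_k \downarrow 0$, $I_{u_k}(R_0) \leq A_0$ and $H_{u_k}(R_0) = 1$, such that $\|u_k - w\|_{H^1(B_{R_0-1}, |x_{n+1}|^a \cL^{n+1})} > \delta_0$ for every $w \in \cH$. Corollary \ref{c:compactness} then provides a non-relabeled subsequence converging in $H^1_\loc(B_{R_0}, |x_{n+1}|^a\cL^{n+1})$ to a solution $u_0$ of \eqref{e:ob-pb local} in $B_{R_0}$, and since $0 \in \Gamma(u_k)$ for every $k$, property \eqref{e:cpt5} gives $0 \in \mathcal{N}(u_0)$.

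The first delicate point is to ensure $u_0 \not\equiv 0$: by Corollary \ref{c:minima freq} one has $I_{u_k}(r) \in [1+s, A_0]$ for $r \in (0, R_0)$, so the monotonicity \eqref{e:H2} in Corollary \ref{c:H} (with $A_1 = 1+s$, $A_2 = A_0$) yields the uniform lower bound $H_{u_k}(1) \geq R_0^{-(n+a+2A_0)}\,H_{u_k}(R_0) = R_0^{-(n+a+2A_0)}$. Since the integrand defining $H_u(1)$ is supported in $\overline{B_1} \subset\subset B_{R_0}$, strong $L^2$-convergence of $u_k$ transfers this bound to $H_{u_0}(1) \geq R_0^{-(n+a+2A_0)} > 0$.

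Next I would pass the almost-homogeneity to the limit. Strong $H^1$-convergence on $\overline{B_1} \subset\subset B_{R_0}$ gives $D_{u_k}(r) \to D_{u_0}(r)$ and $H_{u_k}(r) \to H_{u_0}(r)$ for $r \in \{\tfrac{1}{2}, 1\}$; combined with $H_{u_0}(r) > 0$ (monotonicity \eqref{e:H2} propagates the lower bound down to $r = \tfrac{1}{2}$), this yields $I_{u_k}(r) \to I_{u_0}(r)$ at these radii, whence
\[
I_{u_0}(1) - I_{u_0}\bigl(\tfrac{1}{2}\bigr) = \lim_{k \to \infty}\bigl(I_{u_k}(1) - I_{u_k}\bigl(\tfrac{1}{2}\bigr)\bigr) \leq \lim_{k \to \infty} \eta_k = 0.
\]
Since the frequency is monotone nondecreasing, $I_{u_0}(\cdot)$ is constant on $(\tfrac{1}{2},1)$, and the rigidity clause at the end of Proposition \ref{p:monotonia+lower} forces $u_0$ to be $\lambda_0$-homogeneous on all of $B_{R_0}$ with $\lambda_0 := I_{u_0}(1)$.

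It only remains to check $u_0 \in \cH$, i.e.\ $\lambda_0 \geq 1+s$, and to extract the contradiction. By $\lambda_0$-homogeneity, either $0 \in \Gamma(u_0)$ or the coincidence set of $u_0$ is the whole $\R^n \times \{0\}$, in which case $u_0(x) = c\,|x_{n+1}|^{2s}$ for some $c \leq 0$; the latter case is however excluded by $0 \in \mathcal{N}(u_0)$, since for $c < 0$ the weighted normal derivative $\lim_{t \downarrow 0^+} t^a \partial_{n+1} u_0(0,t) = -2s\,c \neq 0$. Therefore $0 \in \Gamma(u_0)$, and Corollary \ref{c:minima freq} gives $\lambda_0 \geq 1+s$, so $u_0 \in \cH_{\lambda_0} \subseteq \cH$. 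Finally, strong $H^1$-convergence on $B_{R_0 - 1} \subset\subset B_{R_0}$ yields $\|u_k - u_0\|_{H^1(B_{R_0 - 1}, |x_{n+1}|^a \cL^{n+1})} \to 0$, contradicting the standing hypothesis. The main obstacle is the non-triviality step $H_{u_0}(1) > 0$ and the identification $\lambda_0 \geq 1+s$: both hinge on the monotonicity of $r \mapsto H_u(r)/r^{n+a+2A}$ and on ruling out the lowest-frequency profile $c\,|x_{n+1}|^{2s}$ through the nodal condition inherited at the origin.
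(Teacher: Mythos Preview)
Your approach via contradiction and compactness matches the paper's, and the steps establishing nontriviality of $u_0$ and passing almost-homogeneity to the limit are correct (modulo a citation slip: the lower bound $H_{u_k}(1) \geq R_0^{-(n+a+2A_0)}$ comes from the \emph{decreasing} monotonicity \eqref{e:H1}, not \eqref{e:H2}).

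The genuine gap is in your final step showing $\lambda_0 \geq 1+s$. Your dichotomy (either $0 \in \Gamma(u_0)$ or $\Lambda(u_0) = \R^n \times\{0\}$) is correct, but the claim that full contact set forces $u_0 = c\,|x_{n+1}|^{2s}$ is unjustified: there are homogeneous solutions to \eqref{e:ob-pb local} with $\Lambda = \R^n\times\{0\}$ of arbitrarily high homogeneity, for instance the functions in case (III) of Lemma~\ref{l:classification} (or more generally case (iii) of Proposition~\ref{p:classiFICAzione 2d improved}). For those the weighted normal derivative at the origin vanishes, so the nodal condition $0 \in \mathcal{N}(u_0)$ does not exclude them, and you cannot conclude $0 \in \Gamma(u_0)$. (Incidentally, for $u_0 = c\,|x_{n+1}|^{2s}$ the weighted normal derivative at the origin equals $2sc$, not $-2sc$.) The fix is much simpler than any case analysis: you already recorded $I_{u_k}(1) \geq 1+s$ for every $k$ via Corollary~\ref{c:minima freq}; since $I_{u_k}(1) \to I_{u_0}(1) = \lambda_0$ by the strong convergence you established, passing to the limit gives $\lambda_0 \geq 1+s$ directly. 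This is precisely the paper's argument, avoiding any discussion of $\Lambda(u_0)$ or $\Gamma(u_0)$ altogether.
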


\begin{proof}
We argue by contradiction: assume there exist
$\delta, A,R$ as in the statement
and a sequence $(u_k)_{k\in\N}$ of $k^{-1}$-homogeneous
solutions in $B_R$ with $I_{u_k}(R)\leq A$
such that
\begin{align}\label{e:contra}
H_{u_k}(R) = 1
\and
\inf_{w\in \cH}\big\| u_{k} - w\big\|_{H^1(B_{R-1}, 
|x_{n+1}|^a\,\cL^{n+1})} > 
\delta>0.
\end{align}
We can then apply Corollary~\ref{c:compactness}
and find a subsequence (not relabeled) and a solution $u_0$
to the obstacle problem in $B_R$
such that $u_k \to u_0$ in $H^1(B_{R-1}, |x_{n+1}|^a\,\cL^{n+1})$.
Note that
\[
H_{u_0}(R-1)=\lim_{k\to+\infty}H_{u_k}(R-1)
\stackrel{\eqref{e:H1}}{\geq}\Big(\frac{R-1}{R}\Big)^{n+a+2A}
\lim_{k\to+\infty}H_{u_k}(R) = \Big(\frac{R-1}{R}\Big)^{n+a+2A}.
\]

In particular, $u_0$ is non-trivial and
\[
I_{u_0}(1) - I_{u_0}\big(\sfrac12\big) = 
\lim_{k\to+\infty} \Big(I_{u_k}(1) - I_{u_k}\big(\sfrac12\big)\Big)
= 0.
\]
By Proposition~\ref{p:monotonia+lower} we infer that $u_0$
is homogeneous of degree 
$I_{u_0}(1) = \lim_{k\uparrow+\infty}I_{u_k}(1)\geq 1+s$,
because $0\in\Gamma(u_k)$.
Therefore, $u_0\in \cH$ and this contradicts \eqref{e:contra}.
\end{proof}

Next we show a rigidity result which will be used crucially in the estimate
of the Hausdorff measure of the free boundary.

\begin{proposition}\label{p:rigidity}
For every $\tau,A>0$ 
there exists $\eta_{\ref{p:rigidity}}(\tau, A)>0$ with this property.
Let $u:B_4\to\R$ be a $\eta_{\ref{p:rigidity}}$-almost homogeneous
solution to the thin obstacle problem 
with $I_u(0,4)\leq A$.
Then, the following dichotomy holds:
\begin{itemize}
\item[(i)] either for every point $x\in \Gamma(u)\cap B_2$ we have 
\begin{align}\label{e:rigidity1}
\left\vert I_u(x,1) - I_u(0,1)\right\vert\leq\tau,
\end{align}
\item[(ii)] or there exists a linear subspace $V\subset\R^{n}\times\{0\}$
of dimension $n-2$ such that
\begin{align}\label{e:rigidity2}
y\in \Gamma(u)\cap B_{2},\; 
I_u(y,1) - I_u\big(y,\sfrac12\big)\leq \eta_{\ref{p:rigidity}}
\quad\Longrightarrow\quad
y\in \cT_\tau(V),
\end{align}
recall the notation $\cT_\tau(V) := \{x\in\R^{n+1}: 
\dist(x,V)<\tau\}$.
\end{itemize}
\end{proposition}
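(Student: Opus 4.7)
I argue by contradiction and compactness. Assume there exist $\tau, A > 0$, a vanishing sequence $\eta_k \downarrow 0$, and $\eta_k$-almost homogeneous solutions $u_k: B_4 \to \R$ with $I_{u_k}(0, 4) \le A$ such that both (i) and (ii) fail with threshold $\eta_k$. Since $I_u$ is invariant under multiplication of $u$ by a constant (Remark~\ref{r:scaling}), I normalize $H_{u_k}(0, 4) = 1$. By Corollary~\ref{c:compactness}, after passing to a subsequence $u_k \to u_0$ strongly in $H^1_\loc(B_4, |x_{n+1}|^a \cL^{n+1})$, with $u_0$ solving \eqref{e:ob-pb local} in $B_4$. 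The $\eta_k$-almost homogeneity yields $I_{u_0}(0,1) = I_{u_0}(0, \sfrac{1}{2})$ in the limit, so by Proposition~\ref{p:monotonia+lower} $u_0$ is homogeneous of some degree $\lambda_0 := \lim_k I_{u_k}(0,1) \ge 1+s$ (Corollary~\ref{c:minima freq}); hence $u_0 \in \cH$. Since strong convergence preserves $H_{u_0}(0,4) = 1$, $u_0$ is not identically zero.

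From failure of (i), there exists $x_k \in \Gamma(u_k) \cap B_2$ with $|I_{u_k}(x_k, 1) - I_{u_k}(0, 1)| > \tau$; extracting a further subsequence $x_k \to x_\infty \in \overline{B_2}$, and the last claim of Corollary~\ref{c:compactness} gives $x_\infty \in \mathcal{N}(u_0)$. Unique continuation (applied to the analytic function $u_0$ in $B_4 \setminus B_4'$) together with $u_0 \not\equiv 0$ shows $H_{u_0}(x_\infty, 1) > 0$, so the strong $H^1$ convergence yields $I_{u_k}(x_k, 1) \to I_{u_0}(x_\infty, 1)$ and hence $|I_{u_0}(x_\infty, 1) - \lambda_0| \ge \tau$.

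Next I show $u_0 \in \cH^\top$, i.e., $\dim S(u_0) = n-1$. As noted in \S\ref{s:classification}, the spine has dimension at most $n-1$. Suppose instead $\dim S(u_0) \le n-2$, and pick any $(n-2)$-dimensional linear subspace $V \supseteq S(u_0)$. Failure of (ii) for $u_k$ with this $V$ produces $y_k \in \Gamma(u_k) \cap B_2$ with $I_{u_k}(y_k, 1) - I_{u_k}(y_k, \sfrac{1}{2}) \le \eta_k$ and $\dist(y_k, V) > \tau$. After a further subsequence $y_k \to y_\infty \in \mathcal{N}(u_0)$, $\dist(y_\infty, V) \ge \tau$, and the same continuity argument gives $I_{u_0}(y_\infty, 1) = I_{u_0}(y_\infty, \sfrac{1}{2})$. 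By Proposition~\ref{p:monotonia+lower} and analyticity $u_0$ is homogeneous with respect to $y_\infty$, so Lemma~\ref{l:spine} gives $y_\infty \in S(u_0) \subseteq V$, contradicting $\dist(y_\infty, V) \ge \tau$.

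Having $u_0 \in \cH^\top$, Lemma~\ref{l:classification} gives $\mathcal{N}(u_0) = S(u_0)$, so $x_\infty \in S(u_0)$ and Lemma~\ref{l:spine} forces $I_{u_0}(x_\infty, r) \equiv \lambda_0$, in contradiction with the bound of the second paragraph. The main technical point is the continuity of the frequency under the limits $x_k \to x_\infty$ and $y_k \to y_\infty$: it requires the strong $H^1$ convergence from Corollary~\ref{c:compactness} together with the unique continuation property of $u_0$ to prevent $H_{u_0}(\cdot, 1)$ from vanishing at the limit points. The cleverness of the statement lies in the choice of $V$ containing $S(u_0)$, which converts the qualitative spread condition encoded by the failure of (ii) into the sharp dimensional equality $\dim S(u_0) = n-1$, after which Lemma~\ref{l:classification} closes the argument.
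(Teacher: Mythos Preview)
Your proof is correct and follows essentially the same approach as the paper's: a contradiction argument via compactness, passing to a homogeneous limit $u_0\in\cH$, and using the dichotomy $u_0\in\cH^\top$ versus $u_0\in\cH^\low$ together with Lemma~\ref{l:classification} and Lemma~\ref{l:spine}. The only organizational difference is that the paper splits upfront into the two cases (limit in $\cH^\top$ or bounded away from $\cH^\top$) and shows that case~(a) contradicts the failure of~(i) while case~(b) contradicts the failure of~(ii), whereas you first extract the consequence of~(i) failing, then use the failure of~(ii) to force $u_0\in\cH^\top$, and finally close the contradiction; the logical content is identical.
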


\begin{proof}
The proof is by contradiction.
We assume that there exist 
$\tau, A$ as in the statement and a sequence
$(u_k)_{k\in\N}$ of $k^{-1}$-almost homogeneous
solutions in $B_4$ with $I_{u_k}(4)\leq A$
contradicting the statement, {\ie}~both 
(i) and (ii) simultaneously fail: 
namely, there exists $x_k\in \Gamma(u_k)\cap B_{2}$ such that 
\begin{align}\label{e:rigidity1-contra}
\left\vert I_{u_k}(x_k,1) - I_{u_k}(0,1)\right\vert >\tau,
\end{align}
and for every linear subspace $V\in\R^n\times\{0\}$
of dimension $n-2$
there exists $y_k\in \Gamma(u_k)\cap B_{2}$ (a priori depending on $V$)
such that
\begin{align}\label{e:rigidity2-contra}
I_{u_k}(y_k,1) - I_{u_k}\big(y_k,\sfrac12\big)\leq k^{-1}
\quad\text{and}\quad y_k\not\in \cT_\tau(V).
\end{align}
By eventually rescaling
the functions of the sequence, we can assume without
loss of generality that $H_{u_k}(0,4)=1$.
In particular, it follows
from Lemma~\ref{l:almost hom} that 
\begin{align}\label{e:distanza omog}
\lim_{k\to+\infty} \inf_{w\in \cH}
\|u_k-w\|_{H^1(B_{3}, |x_{n+1}|^a\,\cL^{n+1})}=0.
\end{align}
Up to passing to a subsequence (not relabeled)
we distinguish to cases:
\begin{itemize}
\item[(a)] either there exists $w_k\in\cH^\top$ such that 
$\lim_k\|u_k-w_k\|_{H^1(B_{3}, |x_{n+1}|^a\,\cL^{n+1})}= 0$;
\item[(b)] or there exists $\delta>0$ such that
\begin{align}\label{e:distanza top}
\inf_{w\in \cH^\top}
\|u_k-w\|_{H^1(B_{3}, |x_{n+1}|^a\,\cL^{n+1})}\geq 
\delta>0\quad\forall\;k\in\N.
\end{align}
\end{itemize}
In case (a) we show that \eqref{e:rigidity1-contra}
cannot hold. Indeed, by
Corollary~\ref{c:compactness} there exist
a homogeneous function $w_0\in 
\cH^\top$ (note that
$\cH^\top$ is closed under locally strong $H^1$
convergence), a point $x_0\in \bar B_2$
and a subsequence (not relabeled) such that
\begin{gather*}
\lim_{k\to+\infty}\Big(\|u_k-w_0\|_{H^1(B_{3}, |x_{n+1}|^a\,\cL^{n+1})}
+ \|w_k-w_0\|_{H^1(B_{3}, |x_{n+1}|^a\,\cL^{n+1})}\Big)=0\\
x_k\to x_0 \in \mathcal{N}(w_0) \cap \bar B_{2}.
\end{gather*}
In particular,
\[
\left\vert I_{w_0}(x_0,1) - I_{w_0}(0,1)\right\vert =
\lim_{k\to+\infty}
\left\vert I_{u_k}(x_k,1) - I_{u_k}(0,1)\right\vert \geq \tau.
\]
Note that $w_0\not\equiv 0$, because 
$H_{w_0}(0,3) = \lim_k H_{u_k}(0,3) \geq (\sfrac34)^{n+a+2A}$ 
thanks to \eqref{e:H1} in Corollary~\ref{c:H}.
This implies that $I_{w_0}(x_0,1) = I_{w_0}(0,1)$, since
$x_0\in\mathcal{N}(w_0)= S(w_0)$ being $w_0\in \cH^{\top}$ 
and Lemma~\ref{l:classification},
which gives the desired contradiction.

In case (b), by combining \eqref{e:distanza omog} and
\eqref{e:distanza top},
and by the compactness in Corollary~\ref{c:compactness}
(up to passing to a subsequence, not relabeled)
there exists $v_0\in \cH$ such that
\[
\lim_{k\to+\infty}\|u_k-v_0\|_{H^1(B_{3}, |x_{n+1}|^a\,\cL^{n+1})}= 0.
\]
Moreover, from \eqref{e:distanza top} we deduce that 
$v_0\in\cH^\low$ (note that $v_0\not\equiv 0$ 
because we have that 
$H_{v_0}(0,3) = \lim_k H_{u_k}(0,3) \geq 
(\sfrac34)^{n+a+2A}$
by Corollary~\ref{c:H}).
We now show that we have a contradiction to \eqref{e:rigidity2-contra}
with $V$ any $(n-2)$-dimensional subspace containing $S(v_0)$.
Indeed, let $y_k$ be as in \eqref{e:rigidity2-contra} for such a choice 
of $V$; by compactness, up to passing to a subsequence (not relabeled), 
there exists $y_0\in \bar B_{2}$ such that
\begin{align}\label{e:rigidity2-contra 2}
I_{v_0}(y_0,1) - I_{v_0}\big(y_0,\sfrac12\big)=0
\quad\text{and}\quad y_0\not\in \cT_\tau(V).
\end{align}
Proposition~\ref{p:monotonia+lower} implies
that $I_{v_0}(y_0,r)
= I_{v_0}(y_0,0^+)$ for every $r>0$ and 
by Lemma~\ref{l:spine} we must have
$y_0 \in S(v_0)$, thus contradicting $S(v_0)\subseteq V$ and 
$y_0\not\in \cT_\tau(V)$.
\end{proof}

%
%
\section{The measure of the free boundary}\label{s:misura}
In this section we prove Theorem~\ref{t:misura}
that provides a local estimate of the Minkowski content, and thus of 
the Hausdorff measure, of the free boundary in the lower dimensional 
obstacle problem.
Here we use a modified version of the inductive covering 
argument in \cite[Section 8]{NaVa1}.
The key monotone quantity we consider is the maximal
function of the frequency
\begin{equation}\label{e:theta}
\Theta_u (x,\rho) := \sup_{y \in \bar{B}_{\rho}(x)
\cap \Gamma(u)} I_u(y, \rho).
\end{equation}
Theorem~\ref{t:misura} is a direct consequence of the
following proposition.

\begin{proposition}\label{p:induzione}
For every $L>0$, there exists
a constant $C_{\ref{p:induzione}}(L)>0$ with
this property:
for any solution $u\not\equiv 0$ to the
thin obstacle problem \eqref{e:ob-pb local} in $B_{2\rho}(z) \subset\R^{n+1}$
with $z\in \R^n\times\{0\}$, we have
\begin{equation}\label{e:IS}
\Theta_{u}(z,\rho)
\leq L
\quad \Longrightarrow \quad
\cL^{n+1}\Big(\cT_{r}\big( \Gamma(u) \cap B_{\sfrac{\rho}2}(z)\big)\Big)
\leq C_{\ref{p:induzione}}(L) \, r^2\,\rho^{n-1}\quad
\forall\;r \in (0,\rho).
\end{equation}
\end{proposition}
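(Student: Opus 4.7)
The plan is to establish Proposition~\ref{p:induzione} via a Naber--Valtorta style inductive covering argument, combining the three tools developed in the preceding sections: the mean-flatness/frequency comparison of Proposition~\ref{p:mean-flatness vs freq}, the rigidity dichotomy of Proposition~\ref{p:rigidity}, and the monotonicity together with the scaling invariance of the frequency (Section~\ref{s:preliminari}). By Remark~\ref{r:scaling} one may reduce to $z=0$, $\rho=1$, so the Minkowski estimate follows from the packing statement that $\Gamma(u)\cap B_{\sfrac12}$ can be covered by at most $C(L)\,r^{1-n}$ balls of radius $r$, since each such ball contributes volume of order $r^{n+1}$ to $\cT_r(\Gamma(u)\cap B_{\sfrac12})$.

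The core is an inductive construction of disjoint covers at dyadic scales $s_k = 2^{-k}$. At each step, every covering ball $B_s(x)$ with $x\in\Gamma(u)$ is classified by inspecting the frequency. Fix parameters $A = L + C_{\ref{l:lim uniforme}}$, a small $\tau>0$, and $\eta = \eta_{\ref{p:rigidity}}(\tau,A)$. If the rescaling $u_{x,s}$ is not $\eta$-almost homogeneous in the sense of Definition~\ref{d:almost hom}, then $I_u(x,s)-I_u(x,s/2)>\eta$; such \emph{refining} balls are bounded in number because the total frequency variation at any point is at most $L-(1+s)$ by monotonicity. If $u_{x,s}$ is $\eta$-almost homogeneous, Proposition~\ref{p:rigidity} offers two alternatives on $B_{s/2}(x)$: either (ii) $\Gamma(u)\cap B_{s/2}(x)$ lies inside the $\tau s$-neighborhood of an affine $(n-2)$-plane, which can be covered by an elementary $(n-2)$-dimensional packing with controlled constants; or (i) all free boundary points in $B_{s/2}(x)$ have comparable frequencies, and the ball enters the \emph{flat regime}.

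For the flat balls one invokes the Naber--Valtorta discrete Reifenberg theorem. Let $\mu_k = \sum_j s_k^{n-1}\,\delta_{x_j}$ be the counting measure supported at the centers of the flat balls at scale $s_k$. Proposition~\ref{p:mean-flatness vs freq} applied at each point $p$ and at the relevant scales yields
\begin{equation*}
\beta_{\mu_k}^2(p,s) \leq \frac{C}{s^{n-1}}\int_{B_s(p)}\Delta_{c s}^{C s}(x)\,\d\mu_k(x).
\end{equation*}
Summing dyadically in $s$ and using Fubini, the integrand telescopes into the total frequency variation $I_u(x,C)-I_u(x,0^+)\leq L-(1+s)$, producing a uniform Carleson-type bound
\begin{equation*}
\int\!\int_0^{1}\beta_{\mu_k}^2(p,s)\,\frac{\d s}{s}\,\d\mu_k(p)\;\leq\;C(L).
\end{equation*}
This is precisely the hypothesis of the discrete Reifenberg theorem of \cite{NaVa1}, which converts it into a uniform packing bound on the flat balls. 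Iterating the construction down to scale $r$ and summing the three contributions (final, flat, refining) yields the desired $C(L)\,r^{1-n}$ cover.

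The principal obstacle is the simultaneous bookkeeping across $\log_2(1/r)$ generations. One must maintain disjointness of the balls at each scale (as required by discrete Reifenberg), choose $\tau$ and $\eta$ uniformly in the scale so that Proposition~\ref{p:rigidity} applies at every generation with identical parameters (enabled by the scaling invariance of $I_u$), and verify that the accumulated constant remains $C(L)$-bounded. The last point rests on the fact that both the refining count and the Reifenberg Carleson sum are charged against the single monotonic quantity $I_u$, whose total variation along the iteration is at most $L-(1+s)$.
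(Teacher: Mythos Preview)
Your proposal identifies the three correct ingredients (the mean-flatness/frequency estimate, the rigidity dichotomy, and the discrete Reifenberg theorem) but the inductive organisation you outline differs from the paper's and, as written, has gaps.

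The paper does \emph{not} iterate on dyadic scales $2^{-k}$. It inducts on the frequency threshold $L$: one shows the statement for $L=1+s$ (where $\Gamma(u)$ is a hyperplane by \eqref{e:H1+s}) and then proves that validity for $L<L_0$ implies validity for $L+\eta(L_0)$. Given $\Theta_u(0,1)\le L+\eta$, for each $x\in\Gamma(u)\cap B_{\sfrac12}$ one defines a \emph{stopping radius} $s_x$, essentially the first scale at which $\Theta_u(x,\cdot)$ exceeds $L$; a Vitali family $\{B_{s_{x_i}/2}(x_i)\}$ is extracted, and the whole argument reduces to the single packing estimate $\sum_i s_{x_i}^{n-1}\le \bar C$. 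The inductive hypothesis is then applied \emph{inside} each stopped ball (where $\Theta_u(x_i,s_{x_i})=L$), and Reifenberg plus rigidity are used only once to obtain the packing bound. This avoids the multi-generation bookkeeping you flag as ``the principal obstacle''.

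Two specific points in your sketch would fail as stated. First, ``refining balls are bounded in number'' is not correct: what is bounded by $(L-(1+s))/\eta$ is the number of refining \emph{generations along any single descent path}, not the total number of refining balls. Without further structure this does not control the packing. Second, your description of alternative (ii) in Proposition~\ref{p:rigidity} is inaccurate: it is not $\Gamma(u)\cap B_{s/2}(x)$ that lies near the $(n-2)$-plane $V$, but only those $y\in\Gamma(u)$ with $I_u(y,s)-I_u(y,s/2)\le\eta$. The remaining free-boundary points in the ball are exactly the ones that refine, and they must be tracked separately. In the paper this is handled by the auxiliary measure $\mu_1^l$ built from both classes of points (Step~2 of Lemma~\ref{l:misura 2}), to which Reifenberg is applied once; the rigidity is used to show that the ``bad'' contribution $W^{(1)}$ clusters near $V$ and is therefore subcritical.
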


\medskip

\begin{proof}[Proof of Theorem~\ref{t:misura}]
We are given $u$ a solution to the 
lower dimensional obstacle problem
in $B_1$ and $K\subset\subset B_1$.
Set $\delta :=  4^{-1}\dist(K,\de B_1)$, 
let $\{B_\delta(x_i)\}_{i\in J}$, with
$J$ a (finite) maximal subset of points in $\Gamma(u) \cap K$, 
having mutual distance at least $\delta$.
Set $L := \max_{i\in J} \Theta_u(x_i,2\delta)$.
Then, by applying Proposition~\ref{p:induzione}
to every $B_{2\delta}(x_i)$,
we have that
\begin{align*}
\cL^{n+1}\Big(\cT_{r}\big( \Gamma(u) \cap K\big)\Big)
& \leq \sum_{i\in J} \cL^{n+1}\Big(\cT_{r}\big( \Gamma(u) \cap 
B_{\delta}(x_i)\big)\Big)\notag\\
&\leq \cH^0(J)\,C_{\ref{p:induzione}}(L) \,\delta^{n-1}\, r^2
=: C\,r^2
\qquad\forall\,r\in (0,2\delta).
\end{align*}
We point out that the constant $C$ depends only $n$, on $I_u(1)$ and
on $\dist(K,\de B_1)$. Indeed, $L$ depends on $I_u(1)$  via 
Lemma~\ref{l:lim uniforme};
and since the balls $B_{\sfrac\delta 2}(x_i)$ are disjoint, contained 
in $B_1$ and with centers in $B_1'$, we can estimate
$\cH^0(J)\leq (\sfrac{2}{\delta})^n$.
\end{proof}

The rest of the section is devoted to the proof of 
Proposition~\ref{p:induzione}.

\subsection{Proof of Proposition~\ref{p:induzione}}\label{ss:induction proof}
By rescaling it is enough to consider the case $z=0$ and $\rho=1$.
We start off with the case of minimal frequency $L=1+s$:
then, by Corollary~\ref{c:minima freq}
$\Theta_u(0,1) = 1+s$ and thus $u\in \cH_{1+s}$
(cf.~\eqref{e:H1+s}).
In turn, this implies that $\Gamma(u)$ is a $(n-1)$-dimensional 
hyperplane of $\{x_{n+1}=0\}$ and \eqref{e:IS} follows at once.

The proof is then completed by showing that $\sup \cS=+\infty$ where 
\[
\cS:= \{L\in \R: \;\text{Proposition~\ref{p:induzione}
holds for $L$}\}.
 \]
The latter claim is in turn implied by the following fact:
for every $L_0>1+s$ there exists a constant $\eta(L_0)>0$
such that, if $L\in \cS$ and $L<L_0$ then $L+\eta(L_0) 
\in \cS$.
In order to specify $\eta(L_0)$ we need to introduce several 
dimensional constants; to show the consistency of their choices,
we declare them at the beginning (the readers can skip this list 
and refer to it each time the constants are introduced):
\begin{itemize}
\item $C_{\ref{l:misura 2}}:= 
10^{3(n-1)}C_{\ref{t:Reif discreto}}(n)$,
where $C_{\ref{t:Reif discreto}}(n)>1$ is the dimensional constant
of Theorem~\ref{t:Reif discreto}; 
\item $\lambda = \min\{10^{-3}, 16^{2-n}\,C_{\ref{l:misura 
2}}^{-1}\}$;
\item $\tau = \min\big\{\lambda^2,
10^{-20n}\,C_{\ref{p:mean-flatness vs freq}}(2L_0,45)^{-2}
\,C_{\ref{l:misura 2}}^{-2}\,\lambda^{4n}\,
\delta_{\ref{t:Reif discreto}}^2(\lambda) \big\}$,
where $C_{\ref{p:mean-flatness vs freq}}(2L_0,45)$ is the constant
in Proposition~\ref{p:mean-flatness vs freq} corresponding to
$R=45$ and $A=2L_0$, and
$\delta_{\ref{t:Reif discreto}}(\lambda)$ is
the constant introduced in Theorem~\ref{t:Reif discreto};
\item $0<\eta\leq \min\big\{\eta_{\ref{p:rigidity}}(\tau,2L_0), \tau, L_0 \big\}$,
where $\eta_{\ref{p:rigidity}}(\tau,2L_0)$ is the constant introduced in 
Proposition~\ref{p:rigidity} with parameters, $\tau$ and $2L_0$.
\end{itemize}
Note that, for ever $L< L_0$ we have that $L+\eta\leq 2L_0$ 
with such a choice of $\eta$.

Then, the proof of Proposition~\ref{p:induzione} consists in showing that
\eqref{e:IS} holds for $L+\eta$, supposing that it has been verified 
for $L<L_0$. We proceed in several steps.
\medskip

\noindent{\bf 1.} 
Let $u$ be a solution in $B_2$ of the lower
dimensional obstacle problem with $\Theta_u(0,1)\leq L+\eta$, and let 
$r\in(0,1)$ be the size of the tubular neighborhood in \eqref{e:IS}
(recall that $\rho=1$ by scaling).
For every $x\in \Gamma(u)\cap B_{\sfrac12}$ we set
$s_x := \max\big\{r_x, 2r\big\}$ with
\begin{gather*}
r_x := 
\begin{cases}
\inf\big\{ t\in(0,1] \;:\;\Theta_u(x,t) > L \big\}
&\text{if } I_u(x,1)>L,\\
1 & \text{if } I_u(x,1)\leq L.
\end{cases}
\end{gather*}
By definition, if $I_u(x,1)> L$, then
\begin{align}\label{e: def y_x}
\exists\; y_{x} \in \bar B_{s_{x}}(x)\cap \Gamma(u)\quad
\text{such that }\;
I_u(y_x,s_{x})\geq L.
\end{align}
Let now $\{x_i\}_{i\in J}\subset \Gamma(u)\cap B_{\sfrac12}$
be a finite collection of points such that
the balls $B_{\sfrac{s_{x_i}}{2}}(x_i)$
constitute a Vitali covering of $\Gamma(u) \cap B_{\sfrac12}$: 
\ie
\begin{align}\label{e:covering}
B_{\sfrac{s_{x_i}}{10}}(x_i) \cap B_{\sfrac{s_{x_j}}{10}}(x_j)
=\emptyset\quad\forall\;i\neq j\in J
\quad\text{and}\quad
\Gamma(u)\cap B_{\sfrac12}\subset \bigcup_{i\in J}
B_{\sfrac{s_{x_i}}{2}}(x_i).
\end{align}
By construction, we have that
\begin{itemize}
 \item[(i)] $\cT_{r}\big(\Gamma(u)\cap B_{\sfrac{s_{x_i}}2}(x_i)\big)
\subseteq B_{s_{x_i}}(x_i)$, for all $i\in J$ because 
$2r\leq s_{x_i}$, 
\item[(ii)]
$\Theta_u(x_{i},s_{x_i}) = L$ if $s_{x_i} >  2r$.
\end{itemize}
The key estimate is to show that there exists a dimensional constant 
$\bar C>0$ such that
\begin{equation}\label {e:iii}
\sum_{i\in J} s_{x_i}^{n-1} \leq \bar C.
\end{equation}
Indeed, assuming momentarily \eqref{e:iii} 
we can prove \eqref{e:IS} for $L+\eta$ as follows: 
\begin{align}
\cL^{n+1}\Big(\cT_{r}\big( \Gamma(u) \cap B_{\sfrac12} \big)
\Big)
&\stackrel{\eqref{e:covering}}{\leq} \sum_{i\in J} 
\cL^{n+1}\Big(\cT_{r}(\Gamma(u)\cap B_{\sfrac{s_{x_i}}2}(x_i))\Big)
\notag\\ & 
\stackrel{\textup{(i)}}{\leq}\sum_{i\in J \, : \,s_{x_i} > 2r } 
\cL^{n+1}\Big(\cT_{r}(\Gamma(u)\cap B_{\sfrac{s_{x_i}}2}(x_i))\Big)
+ \sum_{i\in J \, :\, s_{x_i} =\, 2r }\cL^{n+1}\big( B_{s_{x_i}}(x_i)\big)
\notag\\ & 
\leq\sum_{i\in J \, : \,s_{x_i} > 2r }
C_{\ref{p:induzione}}(L) \, r^2\,s_{x_i}^{n-1}
+ \sum_{i\in J \, :\, s_{x_i} =\, 2r } \omega_{n+1}\,
(2r)^{n+1}\notag\\
& \leq \left(C_{\ref{p:induzione}}(L)+2^{n+1}\omega_{n+1}\right)\, 
r^2 \sum_{i\in J} s_{x_i}^{n-1}
\stackrel{\eqref{e:iii}}{\leq}
C_{\ref{p:induzione}}(L+\eta)\, r^2,\notag
\end{align}
with $C_{\ref{p:induzione}}(L+\eta) := 
\left(C_{\ref{p:induzione}}(L)+2^{n+1}\omega_{n+1}\right)\,\bar C$.
In the third inequality, we have used \eqref{e:IS} itself with bound $L$ 
on $\Theta_u(x_i,s_{x_i})$ in view of (ii).

\medskip

\noindent {\bf 2.}
Next we want to prove the claim \eqref{e:iii}. 
Let $\lambda$ be the constant introduced at the beginning,
we consider a suitable decomposition of the sets of centers
$\{x_i\}_{i\in J}$:
\[
\{x_i\}_{i\in J}
= A^{(0)}\cup \bigcup_{l=1}^{N(\lambda)}A^{(l)},
\]
with $A^{(0)} := \big\{x_i,\,i\in J\,:\,s_{x_i}\geq \lambda^2\big\}$,
$N(\lambda) = \lfloor 10^n\lambda^{-3n} \rfloor +1$ 
and $A^{(l)}$ satisfying the following condition for $l>0$:
\begin{align}\label{e:condizioni Al0}
\forall\; x_i,x_j\in A^{(l)},\;\; x_i\in B_{\sfrac{s_{x_j}}{\lambda}}(x_j)
\quad\Longrightarrow\quad s_{x_i}\leq \lambda^2\,s_{x_j}.
\end{align}
To see that such a decomposition exists,
we follow \cite[Lemma~5.4]{NaVa1} and
proceed inductively.
We order the points in $A\setminus A^{(0)}$
according to a decreasing order of the
corresponding radii: \ie, 
$A\setminus A^{(0)} =\{p_i\}_{i\in J'}$ with 
$s_{p_{i+1}}\leq s_{p_i}$. Then, $p_1\in A^{(1)}$ and,
if $p_1,\ldots, p_{i-1}$
have been sorted out, we 
assign $p_i$ to some $A^{(l)}$
so that $A^{(l)}$ does not contain any point $p_j$ with $j\leq 
i-1$, for which
\begin{align}\label{e:condizioni Al}
p_i\in B_{\sfrac{s_{p_j}}{\lambda}}
(p_j)
\quad\text{and}\quad
\lambda^2 s_{p_j}\leq s_{p_i}\leq
s_{p_j}.
\end{align}
Note that for every $j$ satisfying \eqref{e:condizioni Al} we have
$|p_i-p_j|\leq 
\sfrac{s_{p_j}}{\lambda}\leq 
\sfrac{s_{p_i}}{\lambda^3}$,
thus $p_j \in 
B_{\sfrac{s_{p_i}}{\lambda^3}}(p_i)$;
moreover, the balls $B_{\sfrac{s_{p_i}}{10}}(p_j)$ 
with $j$ as in \eqref{e:condizioni Al}
are disjoint, as
$s_{p_i}\leq s_{p_j}$ and 
$B_{\sfrac{s_{p_j}}{10}}(p_j)$
are disjoint by construction.
Therefore, since $N(\lambda)$ is strictly bigger 
than the number of disjoint balls with radius 
$\sfrac{1}{10}$ in $B_{\sfrac{1}{\lambda^3}}$
and center on $B_{\sfrac{1}{\lambda^3}}'$,
one can surely find
$l$ so that $p_j\not\in A^{(l)}$ for all $j$ as in 
\eqref{e:condizioni Al}.

Let us check that \eqref{e:condizioni Al0} holds. Indeed,
if $j<i$ ({\ie}~$s_{p_i}\leq s_{p_j}$) and
$p_i \in 
B_{\sfrac{s_{p_j}}{\lambda}}(p_j)$,
then the second condition in \eqref{e:condizioni Al} must fail
(being $p_j\in A^{(l)}$), {\ie}~$s_{p_i}< 
\lambda^2\,s_{p_j}$.
On the other hand, if $i<j$ ({\ie}~$s_{p_j}\leq 
s_{p_i}$), from 
$p_i \in 
B_{\sfrac{s_{p_j}}{\lambda}}(p_j)$, we deduce
$p_j \in 
B_{\sfrac{s_{p_j}}{\lambda}}(p_i)\subset 
B_{\sfrac{s_{p_i}}{\lambda}}(p_i)$
and, as $p_j\in A^{(l)}$,
the second condition in \eqref{e:condizioni Al} must fail,
{\ie}~$s_{p_j}< \lambda^2\,s_{p_i}$. But this is a 
contradiction because
$\lambda<\sfrac{1}{10}$ and
\[
\frac{s_{p_i}}{10}\leq |p_i-p_j|\leq  
\frac{s_{p_j}}{\lambda}< \lambda\,s_{p_i}.
\]

\medskip

\noindent {\bf 3.}
Next, for $l\in\{0,\ldots, N(\lambda)\}$ we introduce the measures:
\begin{equation}\label{e:mu l}
\mu^l := \sum_{x\in A^{(l)}} s_{x}^{n-1} \delta_{x}.
\end{equation}
To conclude \eqref{e:iii}, we show that
there exists a dimensional constant $C_0>0$ such that
\begin{equation}\label{e:stima misura}
\mu^l \big(B_{\sfrac12}\big) \leq C_0
\quad\forall\; l\in \{0,\ldots, N(\lambda)\}.
\end{equation}
Indeed, from \eqref{e:stima misura} we infer \eqref{e:iii} with
the constant $\bar C := \big(N(\lambda)+1\big)\, C_0$:
\begin{align*}
\sum_{i\in J} s_{x_i}^{n-1} &\leq 
\sum_{l=0}^{N(\lambda)}\mu^l(B_{\sfrac12})\leq 
\big(N(\lambda)+1\big)\, C_0 = \bar C.
\qedhere
\end{align*}
The case $l=0$ is straightforward:
since the balls $B_{\sfrac{\lambda^2}{10}}(x)$ with $x\in A^{(0)}$
are pairwise disjoint, contained in $B_1$ and with center $x\in 
B_{\sfrac12}'$, 
then $\cH^0\big(A^{(0)}\big)\leq \frac{10^n}{\lambda^{2n}}$. Being 
$s_x\leq 2$ we deduce that 
\[
\mu^0(B_{\sfrac12}) =
\sum_{x\in A^{(0)}} s_x^{n-1}\leq \cH^0\big(A^{(0)}\big)2^{n-1}
\leq \frac{20^n}{\lambda^{2n}},
\]
and estimate \eqref{e:stima misura} for $l=0$ 
follows as soon as $C_0\geq \sfrac{20^n}{\lambda^{2n}}$.

For the remaining cases, we are going to show the following
lemma.

\begin{lemma}\label{l:misura 2}
Let $\mu^l$ be the measures in \eqref{e:mu l} with $l\geq1$. Then, 
\begin{equation}\label{e:stima misura 2}
\mu^l \big(B_{\rho}(x)\big) \leq C_{\ref{l:misura 2}}\,\rho^{n-1},
\end{equation}
for every $x\in\spt(\mu^l)$ and for every 
$\rho\in (s_x,\lambda^2]$,
where $C_{\ref{l:misura 2}}>0$ is the dimensional
constant introduced at the beginning.
\end{lemma}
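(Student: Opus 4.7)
The plan is to prove \eqref{e:stima misura 2} by a Naber--Valtorta style covering argument, combining the discrete Reifenberg theorem (Theorem~\ref{t:Reif discreto}) with the mean-flatness/frequency bound of Proposition~\ref{p:mean-flatness vs freq} and the rigidity dichotomy of Proposition~\ref{p:rigidity}. Concretely, I would argue by induction on a discrete sequence of scales $\rho_k = 16^{-k}\lambda^2$, starting from the smallest admissible scale $\rho \approx s_x$ (where the bound is trivial) and moving upwards. At the base scale, the separation condition~\eqref{e:condizioni Al0} defining $A^{(l)}$ together with the disjointness of the balls $B_{s_{x_i}/10}(x_i)$ limits the number of contributing centers inside $B_\rho(x)$ to a dimensional constant, and since each carries mass at most $\rho^{n-1}$ the claim is immediate.

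For the inductive step at scale $\rho$, I would apply Proposition~\ref{p:rigidity} to the rescaled solution centered at $x$, using that by construction $\Theta_u(x_i, s_{x_i}) = L$ and $\Theta_u(x,1)\leq L+\eta$, so $u$ is $\eta$-almost homogeneous after suitable rescaling and the frequency is pinched in a window $[L-\tau,L+\eta]$ on the support of $\mu^l \cap B_\rho(x)$. The dichotomy splits into two cases. In the rigid case~(ii) all relevant points lie in a $\tau\rho$-neighborhood of an $(n-2)$-dimensional affine subspace $V$; I cover $\cT_{\tau\rho}(V) \cap B_\rho(x)$ by $\lesssim \tau^{-(n-2)}$ balls of radius $2\tau\rho$, apply the inductive hypothesis to each of them, and collect a total mass $\lesssim C_{\ref{l:misura 2}} \tau \rho^{n-1}$, which is strictly smaller than $\tfrac12 C_{\ref{l:misura 2}}\rho^{n-1}$ because $\tau$ has been chosen smaller than any power of $\lambda$ at the beginning of the proof.

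In the pinching case~(i) I would invoke Proposition~\ref{p:mean-flatness vs freq} with $R = 45$ to bound, for every $y \in \spt(\mu^l) \cap B_{\rho/2}(x)$ and every $r \leq \rho/100$,
\[
\beta_{\mu^l}(y, r)^2 \leq \frac{C}{r^{n-1}} \int_{B_r(y)} \Delta_{20r}^{94r}(z)\, d\mu^l(z).
\]
Integrating this inequality against $d\mu^l(y)\,\frac{dr}{r}$ and exchanging the order of integration via Fubini, the inner frequency increment telescopes under the monotonicity of $I_u(z,\cdot)$ (Proposition~\ref{p:monotonia+lower}) to a total drop $|I_u(z,\rho) - I_u(z,s_z)| \leq \tau + \eta$. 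Since $\tau$ (and hence $\eta\leq\tau$) has been chosen so that $C_{\ref{p:mean-flatness vs freq}}(2L_0,45) \cdot (\tau + \eta) \leq \delta_{\ref{t:Reif discreto}}(\lambda)$, the hypothesis of the discrete Reifenberg theorem is verified on every subball, and it yields $\mu^l(B_\rho(x)) \leq C_{\ref{t:Reif discreto}}(n)\rho^{n-1} \leq 10^{-3(n-1)} C_{\ref{l:misura 2}}\rho^{n-1}$, closing the induction.

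The main obstacle is the bookkeeping that ensures the induction actually closes: the contribution in both case~(i) and case~(ii) must come out strictly below $C_{\ref{l:misura 2}}\rho^{n-1}$ so that the constant does not blow up as one climbs through the scales $\rho_k$. This forces the hierarchy $\lambda \gg \tau \gg \eta$ declared at the beginning of the proof of Proposition~\ref{p:induzione}, with $\lambda$ chosen to absorb the packing constant in~(ii), $\tau$ chosen relative to the Reifenberg threshold $\delta_{\ref{t:Reif discreto}}(\lambda)$ to close~(i), and $\eta$ chosen to enable the application of Proposition~\ref{p:rigidity} at threshold $\tau$.
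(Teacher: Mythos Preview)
Your proposal has the right ingredients (discrete Reifenberg, the mean-flatness/frequency estimate, the rigidity dichotomy, induction on scales), but the way you deploy the dichotomy is structurally different from the paper's proof and, as written, does not close.

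The gap is in your case~(i). Alternative~(i) of Proposition~\ref{p:rigidity} only pins the frequency $I_u(z,\cdot)$ at the \emph{top} scale $\rho$; it says nothing about $I_u(z,s_z)$. For $z\in A^{(l)}$ we only know $\Theta_u(z,s_z)=L$, which is a supremum over a ball, not the value at $z$; the drop $\Delta_{s_z}^{\rho}(z)$ can therefore be large, and the telescoping sum that feeds into Theorem~\ref{t:Reif discreto} does not collapse to $\tau+\eta$. Your case~(ii) has a related issue: the conclusion of~(ii) places only the \emph{$\eta$-almost homogeneous} points in $\cT_{\tau\rho}(V)$, not ``all relevant points'', and you have not verified almost-homogeneity for every $z\in\spt(\mu^l)\cap B_\rho(x)$.

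The paper resolves this not by a single top-scale dichotomy but by constructing an auxiliary measure $\mu^l_1$. One first separates $W=\spt(\mu^l)\cap B_t(w_0)$ into $W^{(1)}=\{w:I_u(w,s_w)<L-\tau\}$ and $W^{(2)}=W\setminus W^{(1)}$. For each $w\in W^{(1)}$ one passes to the nearby point $y_w\in\Gamma(u)$ at which $I_u(y_w,s_w)\geq L$ (cf.~\eqref{e: def y_x}); at $y_w$ the frequency genuinely jumps by more than $\tau$, so Proposition~\ref{p:rigidity} is applied there and alternative~(i) is \emph{ruled out by construction}. Alternative~(ii) then forces the remaining points of $W$ attached to $y_w$ to cluster near an $(n-2)$-plane, which---via the inductive hypothesis at the smaller scale---yields $\mu^l(W^{z_\alpha})\leq 2\,s_{z_\alpha}^{n-1}$ and hence $\mu^l(B_t(w_0))\leq 2\,\mu^l_1(B_{11t}(w_0))$. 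The support of $\mu^l_1$ consists only of points $p$ with $\Delta^1_{s_p}(p)\leq 2\tau$ (either because $p=y_w$ or because $p\in W^{(2)}$), and it is to \emph{this} measure that the mean-flatness estimate and Theorem~\ref{t:Reif discreto} are applied. In short: the dichotomy is not a branching of the proof into two parallel cases, but a tool used repeatedly in its case~(ii) to manufacture a replacement measure on which Reifenberg legitimately runs.
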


Lemma~\ref{l:misura 2} implies \eqref{e:stima misura}. Indeed, let us
consider a maximal subset of points $\{x_{i}\}_{i\in J^{(l)}}\subseteq 
A^{(l)}$ with $|x_{i} - x_{j}|\geq \lambda^2$
for all $i\neq j \in J^{(l)}$. Then, 
the balls $\{B_{\sfrac{\lambda^2}{2}}(x_{i})\}_{i\in J^{(l)}}$ are 
disjoint, contained in $B_1$ (as $\lambda<\sfrac12$), and with 
centers $x_{i}\in B_{\sfrac12}'$.
Thus, $\cH^0(J^{(l)})\leq \sfrac{2^n}{\lambda^{2n}}$
and by maximality of the number of points in $\{x_i\}_{i\in J^{(l)}}$
we have also $\spt(\mu^l)\subset
\cup_{i\in J^{(l)}}^{}B_{\lambda^2}(x_{i})$.
Then
\[
\mu^l(B_{\sfrac12}) \leq 
\sum_{i\in J^{(l)}}\mu^l\big(B_{\lambda^2}(x_{i})\big)
\stackrel{\eqref{e:stima misura 2}}{\leq} \cH^0(J^{(l)}) \, C_{\ref{l:misura 2}}
\lambda^{2(n-1)}\leq \frac{2^n}{\lambda^{2}}\, C_{\ref{l:misura 2}},
\]
and \eqref{e:stima misura} follows with
$C_0:=\max\big\{\sfrac{2^n\, C_{\ref{l:misura 2}}}{\lambda^{2}}, 
\sfrac{20^n}{\lambda^{2n}}\big\}$.

\medskip Proposition~\ref{p:induzione}, and hence Theorem~\ref{t:misura},
are now established once we show Lemma~\ref{l:misura 2}.

\subsection{Proof of Lemma~\ref{l:misura 2}}\label{ss:stima misura}
We fix $l\in\{1,\ldots, N(\lambda)\}$, and set 
$s_{\min} :=\min_{}\big\{s_{w}\;:\; w\in A^{(l)}\}$,
$k_{\max} := \lfloor\log_{\lambda}(s_{\min})\rfloor$.
Note that $k_{\max}\geq 2$ and
$\lambda^{k_{\max}+1}< s_{\min}\leq \lambda^{k_{\max}}$.
We prove \eqref{e:stima misura 2} for all
$\rho\in (\max\{\lambda^{k+1},s_{\min}\},\lambda^k]$ by induction 
on $k\in \{2,\ldots,k_{\max}\}$ in decreasing order.
More precisely, the base induction step is for $k=k_{\max}$.
In this case, for every point
$w_0\in\spt(\mu^l)=A^{(l)}$
with $s_{w_0}\leq \lambda^{k_{\max}}$
we have that $\spt(\mu^l)\cap B_{\lambda^{k_{\max}}}(w_0) =\{w_0\}$,
from which \eqref{e:stima misura 2} readily follows.
Indeed, if $w\in B_{\lambda^{k_{\max}}}(w_0)\cap A^{(l)}$
is different from $w_0$, then 
$w\in B_{\sfrac{s_{w_0}}{\lambda}}(w_0)\cap A^{(l)}$
as $s_{w_0}\in(\lambda^{k_{\max}+1},\lambda^{k_{\max}}
]$ and by \eqref{e:condizioni Al0} we reach a contradiction 
\[
\lambda^{k_{\max}+1}< s_{\min} \leq s_w
\stackrel{\eqref{e:condizioni Al0}}{\leq} \lambda^2 s_{w_0} \leq
\lambda^{k_{\max}+2}.
\]
We can then proceed inductively: we assume that we have shown 
\eqref{e:stima misura 2} for every $\rho\in (s_{\min},\lambda^{k+1}]$
for some $k\in\{2,\dots,k_{\max}-1\}$ and for all $w\in\spt(\mu^l)$ with 
$s_w\leq\lambda^{k+1}$, and then we prove that
\begin{equation}\label{e:stima misura2}
\mu^l \big(B_{t}(w_0)\big) \leq C_{\ref{l:misura 2}} \,t^{n-1}
\quad \forall\;t \in (\lambda^{k+1},\lambda^k],\quad
\forall\; w_0\in \spt(\mu^l)\;
\text{ with }\;s_{w_0}<t.
\end{equation}

\medskip

\noindent {\bf 1.}  
Let $t \in (\lambda^{k+1},\lambda^k]$ with $k\geq2$ and
$w_0\in \spt(\mu^l)$ be such that $s_{w_0}< t$.
We set $W := \spt(\mu^l)\cap B_{t}(w_0)$ and
\begin{gather*}
W^{(1)} := \big\{w\in W \;:\; I_u(w,s_w) < L - \tau\big\}
\quad\text{and}\quad
W^{(2)} := W \setminus W^{(1)},
\end{gather*}
where $\tau$ is the constant introduced at the beginning.
Next we order the points in $W^{(1)}$ in such a way that
$W^{(1)} = \{p_{h}\}_{h}$
with $s_{p_{h}}\geq s_{p_{h+1}}$;
and we define inductively $z_1:= y_{p_{1}}$ and
for $\alpha\geq2$
\begin{gather}
z_\alpha:=y_{p_{m_\alpha}} \;\textrm{ with $m_\alpha = \min\Big\{ h \;:\; y_{p_h}\not\in
\cup_{j=1}^{\alpha-1}B_{s_{z_j}}(z_j)\Big\}$},
\end{gather}
where the $y_p$'s are the points defined in \eqref{e: def y_x}
(which exist because $p_h\in A^{(l)}$ with $l\geq1$
implies $s_{p_h} < \lambda^2<1$, {\ie}~$I_u(p_h,1)>L$).
Let $Z$ be the set of the selected points $z_\alpha$'s
and set $s_{z_\alpha} := s_{p_{m_\alpha}}$ (with a slight abuse of notation),
$E:= \cup_{z_\alpha\in Z} B_{s_{z_\alpha}}(z_\alpha)$ and
\[
\mu^l_1 := \sum_{z_\alpha\in Z}
s_{z_\alpha}^{n-1}\,\delta_{z_\alpha}
+ \sum_{w\in W^{(2)}\setminus E} s_w^{n-1}\delta_w.
\]
The measure $\mu^l_1$ satisfies the following five properties:
\begin{gather}
\forall\; p\in \spt(\mu^l_1)\quad
\Delta^1_{s_p}(p) = I_u(p,1)-I_u(p,s_p)\leq \eta+\tau
\leq 2\,\tau,\label{e:P0}\\
\forall\; p \neq p'\in \spt(\mu^l_1)\quad
\max\big\{{s_{p}},{s_{p'}}\big\} \leq 20\,|p-p'|,\label{e:P1}\\
\spt(\mu^l_1)\subset B_{11t}(w_0),\label{e:P1.5}\\
\mu^{l}\big(B_{t}(w_0)\big) \leq 
2\,\mu^l_1(B_{11t}(w_0)),\label{e:P2}\\
\mu^l_1 \big(B_{\rho}(p)\big) \leq
10^{12 n}\,\lambda^{-2n}\,
C_{\ref{l:misura 2}}\,\rho^{n-1}
\quad\forall\;p\in \spt(\mu_1^l),
\;\forall\,\rho \in [\sfrac{s_p}{20}, 10^2\lambda^k].\label{e:P3}
\end{gather}
The properties \eqref{e:P0} and \eqref{e:P1} follows directly 
from the definition of $\mu^l_1$.
More precisely, for \eqref{e:P0} recall the choice $\eta\leq \tau$ and that 
by assumption $\Theta_u(0,1)\leq L+\eta$. Therefore, the conclusion follows 
either by \eqref{e: def y_x} if $p\in Z$ or 
otherwise by the very definition of $W^{(2)}$.  

For \eqref{e:P1} we distinguish three cases:
\begin{itemize}
\item[(i)] $p,\,p'\in Z$. Assume without loss of generality that 
$s_p\leq s_{p'}$, 
then by the selection procedure defining $Z$ itself $p\notin B_{s_{p'}}(p')$, 
and thus $s_{p'}<|p-p'|$;
\item[(ii)] $p\in Z$, $p'\in W^{(2)}\setminus E$. Then $p'\notin B_{s_p}(p)$ 
by definition of $E$, so that $s_p<|p-p'|$. Moreover, if $p=y_w$, 
with $w\in W^{(1)}$, we use \eqref{e:covering} to infer
\[
s_{p'}<10|w-p'|\leq 10(|w-y_w|+|y_w-p'|)\leq 10(s_p
+|p-p'|)\leq 20 |p-p'|.
\]
\item[(iii)] $p,\,p'\in W^{(2)}\setminus E$. 
Since $B_{\frac{s_p}{10}}(p)\cap B_{\frac{s_{p'}}{10}}(p')=\emptyset$ by 
\eqref{e:covering}, $\max\{s_p,s_{p'}\}< 10|p-p'|$. 
\end{itemize}
For what concerns \eqref{e:P1.5}, we notice that 
for all $w\in W$
by \eqref{e:covering} we have that $s_w < 10|w-w_0|\leq 10 t$, and therefore
\[
|y_w - w_0|\leq |y_w-w| + 
|w-w_0|\leq s_w +
|w-w_0| \leq 11 |w-w_0| \leq 11 t.
\]
Eq.~\eqref{e:P2} and \eqref{e:P3} are proven in the next two steps.
The proof of \eqref{e:stima misura2} will then be a consequence
of \eqref{e:P0} -- \eqref{e:P3} only and it will be detailed in
step 4.

\medskip

\noindent{\bf 2.}
For what concerns \eqref{e:P2}, for every $z_\alpha\in Z$
we introduce the sets
\[
W^{z_\alpha}
:=
\Big(W^{(2)}\cap \big(B_{s_{z_\alpha}}(z_\alpha)\setminus
\cup_{j=1}^{\alpha -1}B_{s_{z_j}}(z_j)\big)\Big)
\cup
\Big\{w\in W^{(1)}\,:\,y_w\in B_{s_{z_\alpha}}(z_\alpha)
\setminus \cup_{j=1}^{\alpha -1}B_{s_{z_j}}(z_j)\Big\}.
\]
Hence, as by the very definition of $E$
\[
W^{(2)}\cap E=\cup_\alpha \Big(W^{(2)}\cap \big(B_{s_{z_\alpha}}(z_\alpha)\setminus
\cup_{j=1}^{\alpha -1}B_{s_{z_j}}(z_j)\big)\Big),
\]
and by that of $Z$
\[
 W^{(1)}=\cup_\alpha \Big\{w\in W^{(1)}\,:\,y_w\in B_{s_{z_\alpha}}(z_\alpha)
\setminus \cup_{j=1}^{\alpha -1}B_{s_{z_j}}(z_j)\Big\},
\]
then $W=\cup_{z\in Z} W^{z}\cup (W^{(2)}\setminus E)$ and
\begin{equation}\label{e:stima banana}
\mu^{l}(B_t(w_0))= \sum_{z\in Z} \mu^{l}(W^z) +
\mu^{l}\big(W^{(2)}\setminus E\big)
=\sum_{z\in Z} \mu^{l}(W^z) +
\mu^{l}_1\big(W^{(2)}\setminus E\big).
\end{equation}
We will prove \eqref{e:P2} by showing
that, for every $z_\alpha \in Z$, we have
\begin{align}\label{e:stima chiava}
\mu^{l}(W^{z_\alpha}) \leq 2 s_{z_\alpha}^{n-1}.
\end{align}
Indeed, from \eqref{e:stima chiava} we immediately deduce that
\begin{align*}
\mu^{l}(B_t(w_0))&\stackrel{\eqref{e:stima banana}}{=}
\sum_{z_\alpha\in Z} \mu^{l}(W^{z_\alpha}) +
\mu^{l}_1\big(W^{(2)}\setminus E\big)
\stackrel{\eqref{e:stima chiava}}{\leq}
2 \sum_{z_\alpha\in Z} s_{z_\alpha}^{n-1} +
\mu^{l}_1\big(W^{(2)}\setminus E\big)\\
&\leq 
2\,\mu^l_1(B_{11t}(w_0)).
\end{align*}
The key observation to establish \eqref{e:stima chiava} is the following:
let $\bar w \in W^{(1)}$ be such that $z_\alpha = y_{\bar w}$.
Then, by definition
\begin{gather*}
I(z_\alpha, s_{\bar w})- I(\bar w, s_{\bar w}) >
L - L + \tau = \tau.
\end{gather*}
We can then apply Proposition~\ref{p:rigidity}
in $B_{8s_{\bar w}}(z_\alpha)$ 
with parameters $\tau$, $2L_0$. Indeed, $I_u(z_\alpha,8s_{\bar w})\leq 
\Theta_u(0,1)\leq L+\eta\leq 2L_0$ (recall that $L<L_0$ and we have 
chosen $\eta\leq L_0$). Moreover, as we have imposed $\eta\leq 
\eta_{\ref{p:rigidity}}(\tau,2L_0)$,
we deduce that the first case of the dichotomy
of Proposition~\ref{p:rigidity} does not occur:
{\ie}~there exists a $(n-2)$-dimensional
affine subspace passing through $z_{\alpha}$ such that
\begin{align}\label{e:proposizione magica}
\forall q\in \Gamma(u)\cap B_{4s_{\bar w}}(z_\alpha)\quad
\text{with}\quad
\Delta^{2s_{\bar w}}_{s_{\bar w}}(q)\leq \eta
\quad\Longrightarrow\quad
q\in \cT_{2\tau s_{\bar w}}(V).
\end{align}
Eq.~\eqref{e:proposizione magica} is the main ingredient of the proof,
because it implies that all the points in $W^{z_\alpha}$
different from $\bar w$
have clustered around a lower dimensional space $V$, namely
\begin{equation}\label{e:cluster}
W^{z_\alpha}\setminus\{\bar w\}\subseteq \cT_{4\lambda^2 s_{\bar w}}(V).
\end{equation}
Indeed, consider a generic point $w\in W^{z_\alpha}\setminus\{\bar w\}$.
If $w \in W^{(2)}\cap B_{s_{z_\alpha}}(z_\alpha)$, then
$w\in B_{2s_{\bar w}}(\bar w)$ and by \eqref{e:condizioni Al0}
we have $s_{w}\le \lambda^2 s_{\bar w}$.
In turn this implies that $y_{w} \in B_{s_{w}}(w)
\subset B_{2s_{z_\alpha}}(z_\alpha)$ and
\begin{align}\label{e:oscillazione piccola}
\Delta^{2s_{\bar w}}_{s_{\bar w}}(y_{w})
\leq I(y_{w},1) - I(y_{w},s_{w}) \leq L+\eta - L = \eta.
\end{align}
Therefore, by \eqref{e:proposizione magica} we infer that 
$y_{w} \in \cT_{2\tau s_{\bar w}}(V)$ and, since $\tau \leq {\lambda^2}$,
also $w\in \cT_{2\tau s_{\bar w} + s_{w}}(V)\subset \cT_{4\lambda^2 
s_{\bar 
w}}(V)$.
On the other hand, if $w\in W^{(1)}$, then by the selection
procedure (recall the decreasing order of the radii $s_{z_j}$),
we have that $s_{w}\leq s_{\bar w}$: in particular
$w\in B_{s_{\bar w}}(y_{w})\subset B_{3s_{\bar w}}(\bar 
w)$.
Therefore, thanks to \eqref{e:condizioni Al0}
we have also $s_{w}\le \lambda^2 s_{\bar w}$ and \eqref{e:oscillazione piccola}
holds. By \eqref{e:proposizione magica}
$y_{w} \in \cT_{2\tau s_{\bar w}}(V)$ and hence $w\in \cT_{4\lambda^2 s_{\bar 
w}}(V)$, thus showing \eqref{e:cluster}.

Then the proof of \eqref{e:stima chiava} follows from 
an elementary covering argument.
Let $Q'\subset W^{z_\alpha}\setminus\{\bar w\}$
be a maximal collection of points such that the balls
$\big\{B_{\sfrac{\lambda s_{\bar w}}{20}}(p)\}_{p\in Q'}$
are pairwise disjoint: in particular
$W^{z_\alpha}\setminus\{\bar w\}
\subset \cup_{p\in Q'} B_{\sfrac{\lambda s_{\bar w}}{10}}(p)$.
Let $\pi_V:\R^n\to V$ be the nearest point projection on $V$ and note that,
since $\lambda\leq \sfrac{1}{160}$, we have
\[
B_{\sfrac{\lambda s_{\bar w}}{40}}(\pi_V (p))  \subset
B_{\sfrac{\lambda s_{\bar w}}{20}}(p) \subset B_{4 s_{\bar w}}(z_\alpha),
\quad\forall \; p\in Q',
\]
where we used that every $p\in W^{z_\alpha}$ is contained in
$B_{3s_{\bar w}}(z_\alpha)$, and thus $p\in B_{4s_{\bar w}}(\bar w)$.
Therefore, $B_{\sfrac{\lambda s_{\bar w}}{40}}(\pi_V (p)) \cap V$ are 
pairwise disjoint
for $p\in Q'$ and contained in $B_{4s_{\bar w}}(\pi_V (z_\alpha))\cap V$.
This allows us to give an estimate on the cardinality of $Q'$, namely
$\cH^0(Q') \leq \sfrac{160^{n-2}}{\lambda^{n-2}}$. In proving the latter 
estimate we have crucially used that $V$ has dimension $n-2$.
Now by the inductive hypothesis \eqref{e:stima misura2} we get \eqref{e:stima chiava}:
\begin{align*}
\mu^l (W^{z_\alpha}) 
& \leq \mu^l(\{\bar w\})+
\sum_{p\in Q'}
\mu^{l} \big(B_{\sfrac{\lambda s_{\bar w}}{10}}(p)\big)\leq 
s_{\bar w}^{n-1}+\cH^0(Q')
\,C_{\ref{l:misura 2}} \,(\sfrac{\lambda}{10})^{n-1}\, s_{\bar w}^{n-1}
\\
&\leq s_{\bar w}^{n-1}
+16^{n-2}\lambda\,C_{\ref{l:misura 2}}\, s_{\bar w}^{n-1}
\leq 2\,s_{\bar w}^{n-1} = 2 \,s_{z_\alpha}^{n-1},
\end{align*}
thanks to the choice $\lambda < 16^{2-n}C_{\ref{l:misura 2}}^{-1}$.
We can apply the inductive hypothesis to 
$B_{\sfrac{\lambda s_{\bar w}}{10}}(p)$ since
$s_p\leq \lambda^2s_{\bar w} < \sfrac{\lambda s_{\bar w}}{10}
\leq \lambda^{k+1}$ (the first inequality holds thanks to 
\eqref{e:condizioni Al0} because for every $p\in Q'$ we have 
that $p\in B_{4s_{\bar w}}(\bar w)$, and the last one in view of 
$s_w\leq 10\,t\leq 10\,\lambda^k$ for every $w\in W$).

\medskip

\noindent{\bf 3.}
We show next \eqref{e:P3}. Let $p, \rho$ be as in the statement. 
For every $q\in\spt(\mu_1^l)\cap B_\rho(p)$ let $x_q\in\spt(\mu^l)$ 
be a point such that $y_{x_q}=q$ if $q\notin\spt(\mu^l)$ and
coinciding with $q$ itself otherwise.
Then,
\[
|x_{p}-x_q|\leq |x_{p}-p|+|p-q|+|q-x_q|
 \leq s_{p}+\rho+s_q
\stackrel{\eqref{e:P1}}{\leq} \rho+40|q-p|< 
41 \rho.
\]
Therefore, for every point $q\in \spt(\mu^l_1) \cap B_{\rho}(p)$
we have that the corresponding point $x_q$
belongs to 
$\spt(\mu^l)\cap B_{41\rho}(x_p)$,
so that
\[
\mu^l_1\big(B_\rho(p)\big)\leq 
\mu^l\big(B_{41\rho}(x_p)\big).
\]
The proof of \eqref{e:P3} is now a consequence of
the inductive hypothesis \eqref{e:stima misura2}
and a covering argument.
Indeed, 
\begin{itemize}
 \item[(i)] if $41\rho\leq \lambda^{k+1}$: 
we can apply \eqref{e:stima misura 2} directly
(since $s_p\leq 20\rho$ by assumption), and infer
that $\mu^l\big(B_\rho(p)\big)\leq C_{\ref{l:misura 2}}
41^{n-1}\rho^{n-1}$;

\item[(ii)] if $41\rho>\lambda^{k+1}$: 
we cover $\spt(\mu^l)\cap B_{41\rho}(x_p) $ with balls
$B_{\sfrac{\lambda^{k+1}}{10}}(w)$
having centers $w\in\spt(\mu^l)$
such that half the balls are disjoint. 
Since $\rho\leq 10^2\lambda^k$ by assumption (cf.~\eqref{e:P3}) 
and the centers are in $B_1'$, the cardinality of the cover can 
be estimated by $(\sfrac{10^5}{\lambda})^n$.
Moreover, $s_w\leq 20\cdot 41\rho\leq 10^5\,\lambda^k$
in view of $w\in B_{41\rho}(x_p)$ (cf.~\eqref{e:covering}),
and $\rho\leq 10^2\lambda^k$ by assumption (cf.~\eqref{e:P3}) .
Hence, in case $s_w\leq \lambda^{k+1}$ we can use
the inductive hypothesis \eqref{e:stima misura2}
to infer that $\mu^l(B_{\sfrac{\lambda^{k+1}}{10}}(w))\leq
C_{\ref{l:misura 2}}\lambda^{(k+1)(n-1)}$. Otherwise, if
$s_w > \lambda^{k+1}$, 
$\spt(\mu^l)\cap B_{\sfrac{\lambda^{k+1}}{10}}(w) =\{w\}$ 
by \eqref{e:covering}, and thus
$\mu^l(B_{\sfrac{\lambda^{k+1}}{10}}(w)) = s_w^{n-1}\leq
10^{5(n-1)}\,\lambda^{k(n-1)}$.
In conclusion, 
recalling that $\lambda^{k+1}\leq 41 \rho$, we infer that
\begin{align*}
\mu^l_1\big(B_\rho(p)\big)&
\leq \mu^l\big(B_{41\rho}(x_p)\big)
\leq 
(\sfrac{10^5}{\lambda})^n 
10^{5(n-1)}\,\,C_{\ref{l:misura 2}}\lambda^{k(n-1)}\\
&\leq  10^{12 n}\,\lambda^{-2n}\,
C_{\ref{l:misura 2}}\,\rho^{n-1}.
\end{align*}
\end{itemize}

\medskip

\noindent{\bf 4.}
We are now in the position to infer \eqref{e:stima misura2}
from \eqref{e:P0}--\eqref{e:P3}, thus concluding the proof of Lemma~\ref{l:misura 2}.
We start off estimating the generalized Jones' number for $\mu^l_1$
(for simplicity we omit the subscripts in their notation):
for every $\rho \in (0,{44}t]$, with $t\in(\lambda^{k+1},\lambda^k]$ 
by \eqref{e:stima misura2}, and $w\in\spt(\mu^l_1)$,
using Proposition~\ref{p:mean-flatness vs freq} 
with parameters $A= 2L_0$
and $R=45$ (recall that $L_0+\eta \leq 2L_0$ and
do not confuse the radius $R$ there with the one in this proof) we infer
\begin{equation}\label{e:jones numbers}
\beta^2(w, \rho) \leq \frac{C_{\ref{p:mean-flatness vs freq}}}{\rho^{n-1}}
\int_{B_{\rho}(w)} 
\Delta^{94\rho}_{20\rho}(z)\,
\chi_{[0,20\rho]} (s_z)\,\d \mu^l_1(z),
\end{equation}
having used that $s_z\leq 20\rho$ if 
$z\in \spt(\mu^l_1)\cap B_\rho(w)$ by \eqref{e:P1}.
Integrating \eqref{e:jones numbers} over $B_R({\bar w})$
for $\bar w\in \spt(\mu_1^l)$, with 
$R\in(0,44 \lambda^k]$ and
$\rho \in (0,R]$, we get
\begin{align}\label{e:integrale jones}
\int_{B_{R}(\bar w)} \beta^2\big(w, \rho\big)
\,\d \mu^l_1(w) &\stackrel{\eqref{e:jones numbers}}{\leq}
\frac{C_{\ref{p:mean-flatness vs freq}}}{\rho^{n-1}}
\int_{B_{R}(\bar w)} \int_{B_{\rho}(w)}
\Delta^{94\rho}_{20\rho}(z)\,
\chi_{[0,20\rho]}(s_z)
\,\d \mu^l_1(z)\,\d\mu^l_1(w)\notag\\
&\leq \frac{C_{\ref{p:mean-flatness vs freq}}}{\rho^{n-1}}
\int_{B_{R+\rho}(\bar w)} \mu^l_1\big(B_{\rho}(z)\big)
\Delta^{94\rho}_{20\rho}(z)\,
\chi_{[0,20\rho]}(s_z)
\,\d \mu^l_1(z)\notag\\
& \stackrel{\eqref{e:P3}}{\leq}
\bar C(\lambda)
\int_{B_{2R}(\bar w)} 
\Delta^{94\rho}_{20\rho}(z)\,
\chi_{[0,20\rho]}(s_z)
\,\d \mu^l_1(z).
\end{align}
In the second inequality we have used Fubini's theorem, and 
we have set for simplicity
$\bar C(\lambda) := C_{\ref{p:mean-flatness vs freq}}\,
C_{\ref{l:misura 2}}\,\sfrac{10^{12n}}{\lambda^{2n}}$.
Let us now introduce the following notation for the
average oscillation of a measure $\mu$ at scale $\lambda$ on the ball
$B_\varrho(\bar w)$:
\[
\textup{Osc}^\lambda_{\mu}(\bar w, \varrho):=\int_{B_\varrho
(\bar w)}\sum_{j=0}^{+\infty}\beta^2(y,\lambda^j\,\varrho)\,\d\mu(y).
\]
Then, summing \eqref{e:integrale jones}
for $\rho = \lambda^j R$ with $j\in\N$
and using $\lambda\leq 10^{-3}$, we get
\begin{align}\label{e:integrale jones 2}
\textup{Osc}^\lambda_{\mu_1^l}(\bar w, R)
& \leq \bar C(\lambda)
\int_{B_{2R}(\bar w)} 
\sum_{j=0}^{\lfloor\log_\lambda(\sfrac{s_z}{20R}) 
\rfloor}
\Delta^{20\lambda^{j-1} R}_{20\lambda^j R}(z)
\,\d \mu^l_1(z) \notag\\&
\leq \bar C(\lambda) \int_{B_{2R}(\bar w)} 
\Delta^{1}_{s_z}(z) \,\d \mu^l_1(z)
\stackrel{\eqref{e:P0}}{\leq} 
2\,\tau\,\bar C(\lambda)\,
\mu_1^l(B_{2R}(\bar w))\notag\\
&\stackrel{\eqref{e:P3}}{\leq }
2^{n}\,\bar C(\lambda)^2 \,\tau\,
R^{n-1},
\end{align}
by taking into account that $\sfrac{s_z}{20}\leq \rho
<R$ and $20\lambda^{-1}R <1$ (being $R\leq 44\lambda^k$
and $k\geq2$).
In addition, we notice that in case $R<\sfrac{s_{\bar w}}{20}$, estimate 
\eqref{e:integrale jones 2} still holds true. Indeed, in such a case
$B_{R}(\bar w) \cap \spt(\mu^l_1) =\{\bar w\}$ and
by definition $\beta(\bar w,\lambda^j\,R) =0$ for every $j\in\N$,
so that $\textup{Osc}^\lambda_{\mu_1^l}(\bar w, R)=0$ 

Note moreover that \eqref{e:integrale jones 2}
can be extended to every ball $B_{R}(p)$
with $p\in B_{22t}(w_0)$ and $R\in(0,22t]$:
indeed, if $B_{R}(p) \cap \spt(\mu_1^l) = \emptyset$, then  
$\textup{Osc}^\lambda_{\mu_1^l}(p, R)=0$;
otherwise, if $w\in B_{R}(p)\cap \sup(\mu_1^l)$, then
$B_{R}(p)\subset B_{2R}(w)$ and 
\begin{equation}\label{e:integrale jones 3}
\textup{Osc}^\lambda_{\mu_1^l}(p, R)\leq 2^{n+1}
\textup{Osc}^\lambda_{\mu_1^l}(w, 2R) 
\stackrel{\eqref{e:integrale jones 2}}{\leq}
2^{2n+1}\,\bar C(\lambda)^2 \,\tau\,R^{n-1},
\end{equation}
being $\beta(y,\rho)\leq 2^{n+1}\beta(y,2\rho)$
for every $y$ and every $\rho>0$.

The conclusion of the proof is now an application
of the following result by Naber--Valtorta 
\cite[Theorem 3.4 \& Remark 3.9]{NaVa1}.

\begin{theorem}[Naber--Valtorta \cite{NaVa1}]\label{t:Reif discreto}
There is a dimensional constant $C_{\ref{t:Reif discreto}}(n)>0$
such that the following holds.
For every  $\lambda>0$, there exists $\delta_{\ref{t:Reif discreto}}(\lambda)>0$
with this property:
for every $\{B_{r_i}(x_i)\}_{i\in I}$ finite collection of pairwise
disjoint balls in
$B_2\subset\R^n$ and $\mu := \sum_{i\in I}r_i^{n-1}\delta_{x_i}$,
\[
\textup{Osc}^\lambda_{\mu}(x, \varrho)
\leq \delta_{\ref{t:Reif discreto}}^2(\lambda)\,\varrho^{n-1}
\quad\forall\; B_\varrho(x) \subset B_2
\quad\Longrightarrow \quad
\mu(B_1)\leq C_{\ref{t:Reif discreto}}.
\]
\end{theorem}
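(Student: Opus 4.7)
The plan is a multi-scale packing argument of Reifenberg type, inductively refining a covering of $\spt(\mu)$ by balls at dyadic scales $\lambda^j$ and charging, at each refinement, the error to the local $\beta_\mu^2$ excess so that the hypothesis bounds the total packing.

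First, I would fix a threshold $\delta>0$ (to be chosen small) and at scale $r$ around a point $x$ consider the best-fitting affine $(n-1)$-plane $L(x,r)$: by the characterization \eqref{e:identita chiave}--\eqref{e:beta-charac}, $L(x,r)$ passes through the barycenter of $\mu\res B_r(x)$ and its direction is spanned by the top $n-1$ eigenvectors of the second-moment form $\mathbf{B}_x$. The key ingredient is a quantitative \emph{plane-comparison} estimate: if $x,y\in\spt(\mu)$ with $|x-y|\leq r$ and both $\beta_\mu(x,Cr),\beta_\mu(y,Cr)\leq\delta$, then the Hausdorff distance between $L(x,r)\cap B_r$ and $L(y,r)\cap B_r$ is at most $C\,r\,\big(\beta_\mu(x,Cr)+\beta_\mu(y,Cr)\big)$. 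This is the coercivity backbone of the whole argument and rests on a spectral perturbation analysis of $\mathbf{B}_x$ versus $\mathbf{B}_y$ to control the rotation of the $(n-1)$-dimensional top eigenspace.

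With this in hand, I would run a two-sided Reifenberg construction to build an $(n-1)$-dimensional Lipschitz graph $\Sigma$ which, at every scale, is within $C\delta$-relative distance of $\spt(\mu)$. Concretely, for each scale $\lambda^j$ pick a maximal $\lambda^j$-separated net $\{y_{j,\alpha}\}$ in $\spt(\mu)$, let $\Sigma_j$ be obtained by gluing the planes $L(y_{j,\alpha},\lambda^j)$ via a partition of unity, and show $\Sigma_j\to\Sigma$ in Hausdorff distance. The Lipschitz constant and graph structure of $\Sigma$ come from iterating the plane-comparison estimate; when summed telescopically the incremental tilts are dominated by the scale-sum of $\beta$-numbers, hence by $\textup{Osc}^\lambda_\mu(x,\varrho)\leq \delta^2\,\varrho^{n-1}$. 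This yields the uniform bound $\cH^{n-1}(\Sigma\cap B_{3/2})\leq C(n)$.

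To close, I would exploit the disjointness of the balls $B_{r_i}(x_i)$ together with the inclusion $\spt(\mu)\subset\cT_{C\delta r}(\Sigma)$ at every scale to deduce a mass-comparison $\mu(B_1)\leq C(n)\,\cH^{n-1}(\Sigma\cap B_{3/2})\leq C_{\ref{t:Reif discreto}}$: each $x_i$ contributes mass $r_i^{n-1}$ comparable to the $(n-1)$-Hausdorff measure of the portion of $\Sigma$ lying over $B_{r_i/2}(x_i)$, and the disjointness of the $B_{r_i}(x_i)$ prevents over-counting. The main obstacle is the plane-comparison step: one must control the spectral gap of $\mathbf{B}_x$ in a way that degenerates precisely when $\mu$ is much flatter than $(n-1)$-dimensional along some subspace of lower dimension. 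Handling this requires a careful induction on the dimension of the approximating subspace together with an iteration over dyadic scales, both to avoid such degeneracies and to make the dependence on $\lambda$ explicit enough to select $\delta_{\ref{t:Reif discreto}}(\lambda)$.
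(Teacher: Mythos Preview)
The paper does not prove this theorem: it is quoted verbatim as \cite[Theorem~3.4 \& Remark~3.9]{NaVa1} and used as a black box in the proof of Lemma~\ref{l:misura 2}. There is therefore no ``paper's own proof'' to compare against; the authors simply invoke the result.

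That said, your sketch is broadly in the spirit of the Naber--Valtorta argument, but it glosses over the main difficulty that distinguishes their discrete Reifenberg theorem from the classical Reifenberg/David--Toro constructions. In the classical setting one has a \emph{pointwise} smallness of $\beta$ at every point and scale, and the plane-comparison and graph-construction steps run as you describe. Here the hypothesis is only an \emph{integral} bound $\textup{Osc}^\lambda_\mu(x,\varrho)\leq \delta^2\varrho^{n-1}$, which a priori allows $\beta_\mu(y,\lambda^j\varrho)$ to be large at many individual points and scales. Your plane-comparison step, as stated, assumes $\beta_\mu(x,Cr)\leq\delta$ pointwise, which is exactly what is not available. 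The actual Naber--Valtorta proof has to set up a careful inductive covering in which, at each stage, one separates ``good'' balls (where the $\beta$-excess is small and the Reifenberg iteration can proceed) from ``bad'' balls (where it is large), and then shows via a Chebyshev-type argument against the integral hypothesis that the bad balls carry little mass. The spectral-gap degeneracy you flag at the end is real, but it is secondary to this good/bad dichotomy; controlling it also uses the integral hypothesis together with an a priori upper mass bound that is itself part of the induction. Your outline would need substantial restructuring to accommodate this.
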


Renaming for simplicity the points in the support
of $\mu_1^l$ as $\mu^l_1 = \sum_i 
s_{p_i}^{n-1}\,\delta_{p_i}$,
we can apply Theorem~\ref{t:Reif discreto}
with $x_i := \sfrac{(p_i - w_0)}{(11t)}$,
$r_i := \sfrac{s_{p_i}}{(440t)}$, and
$\mu:=\sum_i r_i^{n-1}\delta_{x_i}$.
Indeed, from \eqref{e:P1.5} we have that $\spt(\mu)\subset B_1$
and from \eqref{e:P1} we have that $B_{r_i}(x_i)$ are pairwise
disjoint.
Moreover, from \eqref{e:integrale jones 3} and the choice of $\tau$
it follows that, for every $B_r(x)\subset B_2$ we have
\begin{align*}
\textup{Osc}^\lambda_{\mu}(x, r)
& = \frac{1}{(40)^{2(n-1)}(11t)^{n-1}}\,
\textup{Osc}^\lambda_{\mu_1^l}(w_0+11tx, 11tr)
\\
&\leq \frac{2^{2n+1}\,}{(40)^{2(n-1)}(11t)^{n-1}}\,
\bar C(\lambda)^2\,\tau\,(11tr)^{n-1}
\leq\delta_{\ref{t:Reif discreto}}^2(\lambda)\,r^{n-1}.
\end{align*}
We then conclude that $\mu^l_1(B_{11t}(w_0))
= (440t)^{n-1}\mu(B_1) \leq
C_{\ref{t:Reif discreto}}\,(440t)^{n-1}$ 
and, by the choice of the constant
$C_{\ref{l:misura 2}}$, we conclude \eqref{e:stima misura 2}:
\[
\mu^l(B_t(w_0))\stackrel{\eqref{e:P2}}{\leq}
2\mu^l_1(B_{11t}(w_0)) \leq
2\,C_{\ref{t:Reif discreto}}\,(440t)^{n-1}
= C_{\ref{l:misura 2}}\,t^{n-1}.
\]

%
%
\section{Structure of the free boundary $\cH^{n-1}$-a.e.}\label{s:rect}

In this section we give the proof of Theorem~\ref{t:rect}.
It is a consequence of Theorem~\ref{t:misura} and
of the following  rectifiability criterion recently
established by Azzam--Tolsa \cite[Theorem~1.1]{AzTo15}.
A similar criterion has also been established independently
by Naber--Valtorta in \cite[Theorem~3.3]{NaVa1}.

\subsection{Azzam--Tolsa rectifiability criterion}
We recall the following definition:
a Radon measure $\mu$ in $\R^n$ is called $k$-rectifiable
if
\begin{itemize}
\item[(i)] $\mu$ is absolutely continuous with respect
to the Hausdorff measure $\cH^{k}$, {\ie}~for every $E\subset\R^n$
\[
\cH^{k} (E) = 0 \quad \Longrightarrow \quad \mu(E) = 0,
\]

\item[(ii)] there exist at most countable many $C^1$ functions
$f_i:\R^k \to \R^n$, for $i \in \N$, such that 
\[
\mu\Big(\R^n \setminus \bigcup_{i \in \N} f_i(\R^k)\Big) = 0.
\]
\end{itemize}
A set $E\subset\R^n$ is said $\cH^k$-rectifiable if the associated 
measure
$\cH^k\res E$ is $k$-rectifiable.

The following is the rectifiability criterion we are going to
exploit:
in order to state it, we need to recall the notion
of upper-density of a measure
\[
\vartheta^{k,\star}(x, \mu) := \limsup_{r\to 0^+} 
\frac{\mu\big(B_r(x)\big)}{\omega_k\,r^k}.
\]

\begin{theorem}[Azzam--Tolsa \cite{AzTo15}]\label{t:AT}
Let $\mu$ be a finite Borel measure in $\R^n$ with
$\vartheta^{k,\star}(x, \mu) < +\infty$ for $\mu$-a.e.~$x \in \R^n$.
Then, $\mu$ is $k$-rectifiable if
\begin{equation}\label{e:AT}
\int_{0}^1 \frac{\big(\beta_{\mu,2}^{(k)} (x,r)\big)^2}{r}\, \d r < 
+\infty \quad\text{for $\mu$-a.e. } x \in \R^n,
\end{equation}
\end{theorem}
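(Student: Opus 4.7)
The plan is to deduce rectifiability from the Carleson-type square summability of the $\beta$-numbers by constructing, at $\mu$-a.e.\ point, a tangent $k$-plane obtained as the limit of best-fit planes across dyadic scales, and then assembling these into countably many $C^1$ parametrizations via a quantitative Reifenberg-type theorem. First I would reduce to a bounded, uniformly controlled setting: by inner regularity and Egorov's theorem, it suffices to fix an arbitrary $\eps>0$ and find, for a set $E$ with $\mu(\R^n\setminus E)<\eps$, a countable covering of $E$ by $C^1$ $k$-dimensional images. On such an $E$ one can assume simultaneously that $\vartheta^{k,\star}(x,\mu)\leq M$ uniformly, that $\int_0^1 \beta_{\mu,2}^{(k)}(x,r)^2\,r^{-1}\,\d r < \eta$ uniformly small, and (after a stopping-time/corona argument) that a quantitative lower density condition holds on the relevant scales: $\mu(B_r(x))\geq m\,r^k$ at the scales where best-fit planes must be compared.

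Next I would choose, for every $x\in E$ and every dyadic $r_j=2^{-j}$, an affine $k$-plane $L_{x,j}$ that is a near-minimizer in the definition of $\beta_{\mu,2}^{(k)}(x,r_j)$. Two consecutive planes $L_{x,j}$ and $L_{x,j+1}$ both approximate $\mu$ on $B_{r_j}(x)$, and the lower density condition guarantees that $\mu\mres B_{r_j}(x)$ spans $k$ linearly independent directions in a quantitative sense. A standard linear-algebra comparison (akin to \eqref{e:identita chiave}--\eqref{e:beta-charac}) then yields
\[
d_{\cH}\big(L_{x,j}\cap B_{r_j}(x),L_{x,j+1}\cap B_{r_j}(x)\big)\leq C\,r_j\,\big(\beta_{\mu,2}^{(k)}(x,r_j)+\beta_{\mu,2}^{(k)}(x,r_{j-1})\big),
\]
with $C$ depending on $M$ and $m$. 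Summing in $j$ and applying Cauchy--Schwarz against $\sum_j \beta^2(x,r_j)$ (which, by hypothesis and monotone convergence, is $\mu$-a.e.\ finite), one concludes that $(L_{x,j})_j$ is Cauchy in the affine Grassmannian, producing a unique limit $L_x$ that plays the role of a tangent plane.

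Having exhibited such $L_x$ together with quantitative flatness at every scale, I would invoke a David--Toro style Reifenberg parametrization theorem \cite{DaTo12}: the hypotheses give that $\spt(\mu)\cap E$ is Reifenberg-flat with summable oscillation, hence it is contained in the image of a bi-Lipschitz (indeed $C^1$) map from a subset of a $k$-plane. Exhausting $E$ (and then $\R^n$) by countably many such pieces yields the desired countable $C^1$ cover, which combined with $\mu$-finiteness and the upper density bound gives absolute continuity with respect to $\cH^k$.

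The principal obstacle is step two: the hypothesis only assumes finite upper density, whereas the angle comparison between $L_{x,j}$ and $L_{x,j+1}$ genuinely needs a quantitative lower density at the relevant scales (otherwise a plane can be arbitrarily rotated without affecting the $\beta$-integral, making the Cauchy argument fail). Bridging this gap requires a delicate corona/stopping-time decomposition that simultaneously truncates the scales on which $\mu$ is too thin and redistributes the $\beta^2$-mass; implementing this carefully so that only an arbitrarily small portion of $\mu$ is lost is the technical heart of the Azzam--Tolsa argument, and it is where the $L^2$ (rather than, say, $L^\infty$) nature of the $\beta$-number is essentially used.
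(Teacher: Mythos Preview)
The paper does not prove Theorem~\ref{t:AT}: it is quoted verbatim from Azzam--Tolsa \cite{AzTo15} (with the Naber--Valtorta analogue \cite{NaVa1} also mentioned) and used as a black box in the proof of Theorem~\ref{t:rect}. There is therefore no ``paper's own proof'' to compare against; you have written a sketch of the cited external result rather than of anything argued in the present paper.

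That said, as a sketch of the Azzam--Tolsa argument your outline is broadly in the right spirit: the reduction via Egorov to uniform upper density and uniformly small $\beta$-square function, the corona/stopping-time decomposition to manufacture quantitative lower density on the good scales, the Cauchy estimate for successive best-fit planes, and the appeal to a David--Toro type Reifenberg parametrization are all ingredients that appear in \cite{AzTo15}. You also correctly isolate the genuine difficulty, namely that finite \emph{upper} density alone does not give the spanning condition needed to compare $L_{x,j}$ with $L_{x,j+1}$, and that a stopping-time truncation of thin scales is required. What your sketch understates is the depth of that step: the corona decomposition in \cite{AzTo15} is not a light bookkeeping device but the core of the proof, and it interacts with a delicate packing condition ensuring that the ``bad'' stopped regions carry only a controlled amount of measure across all generations. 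Treating this as a single sentence (``redistributes the $\beta^2$-mass'') glosses over the main technical content; a genuine proof would have to make that packing argument explicit.
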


The following two remarks are in order.

\begin{remark}\label{r:densita' uno}
In the case $E \subset\R^n$ is a Borel set with $\cH^k(E) < + \infty$,
then $\mu := \cH^k \res E$ has upper-density finite 
$\mu$-almost everywhere. More precisely, 
$\vartheta^{k,\star}(x, \mu) \leq 1$ for $\mu$-a.e.~$x \in 
E$ (see for instance \cite[(2.43)]{AmFuPa}).
\end{remark}

\begin{remark}\label{r:somma}
Let $\lambda \in (0,1)$ be any number. 
For every $\lambda^{q+1} \leq r < \lambda^q$ (with $q \in \N$)
we have that $\beta_{\mu,2}^{(k)}(x,r) \leq C 
\,\beta_{\mu,2}^{(k)}(x, \lambda^{q})$
for some constant $C=C(\lambda,k)$, and hence
\begin{align}\label{e:intbetadiscreto}
\int_{0}^1 \frac{\big(\beta_{\mu,2}^{(k)} (x,r)\big)^2}{r}\, \d r & =
\sum_{q=0}^{\infty} \int_{\lambda^{q+1}}^{\lambda^q}
\frac{\big(\beta_{\mu,2}^{(k)} (x,r)\big)^2}{r}\,\d r
\leq C\,\sum_{q=0}^{\infty}\big(\beta_{\mu,2}^{(k)} (x,\lambda^q)\big)^2.
\end{align}
\end{remark}

We can now prove that $\Gamma(u)$ is $\cH^{n-1}$-rectifiable.

\subsection{Proof of Theorem~\ref{t:rect}}
We are given a solution $u$ to the 
lower dimensional obstacle problem
in $B_1$ and we want to show that $\Gamma(u) \cap B_{R}$
is rectifiable for every $R<1$.
Set $\delta :=  \sfrac{(1-R)}2$ and 
let $\{B_\delta(x_i)\}_{i\in J}$ be a finite covering
of $\Gamma(u) \cap B_R$, with $x_i \in \Gamma(u)$,
and set $L := \max_{i\in J} \Theta_u(x_i,2\delta)$.
Then, it suffices to show that 
$\Gamma(u) \cap B_\delta(x_i)$ is rectifiable.

After a suitable change of variable ($v(x):=u(x_i +\delta \,x)$
-- cf.~Remark~\ref{r:scaling}), we are left to verify
the following statement:
let $v$ be a solution to the lower dimensional obstacle problem
in $B_2$ with $0\in\Gamma(v)$, then 
$\Gamma(v) \cap B_{1}$ is
rectifiable. To this aim,
for every $l \in \N \setminus \{0\}$ we consider
the following sets:
\begin{equation}\label{e:El}
E_l := \Big\{ x \in \Gamma(v) \cap B_{1}\;:\;
\cH^{n-1}\big(\Gamma(v) \cap B_{r}(x)\big) \leq 2\, \omega_{n-1}\,r^{n-1}
\quad\forall \; r\in \Big(0, \frac1l\Big) \Big\}.
\end{equation}
Note that $E_{l} \subseteq E_{l+1}$; and that
Theorem~\ref{t:misura} and Remark~\ref{r:densita' uno} imply 
\begin{equation}\label{e: quasi esaustione o esaurimento?}
\cH^{n-1}\Big(\Gamma(v) \cap B_{1}
\setminus \cup_{l=1}^{\infty} E_l\Big)
= 0. 
\end{equation}
Therefore, it is now enough to show that $E_{l}$ is 
$\cH^{n-1}$-rectifiable for any fixed integer 
$l\in\N$; in this respect we set $\mu_l := \cH^{n-1}\res E_l$.
We fix $\lambda\in(0,\sfrac{1}{18})$ 
and an integer $q_0$ such that $\lambda^{q_0-1}\leq 1$.
By applying Proposition~\ref{p:mean-flatness vs freq}
(with parameter $R=7$) we have that
\begin{align}\label{e:jones numbers4}
\sum_{q=q_0}^{+\infty}\int_{B_{1}}
\beta_{\mu_l}^2\big(y, \lambda^q\big)\,\d \mu_l(y)
&\leq \sum_{q=q_0}^{+\infty}
\frac{C_{\ref{p:mean-flatness vs 
freq}}}{\lambda^{q (n-1)}}
\int_{B_{1}}\int_{B_{\lambda^q}(y)}
\Delta^{18\lambda^{q}}_{\lambda^{q}}(x)
\,\d \mu_l(x)\,\d\mu_l(y)\notag\\
&\leq 
\sum_{q=q_0}^{+\infty}
\frac{C_{\ref{p:mean-flatness vs 
freq}}}{\lambda^{q(n-1)}}
\int_{B_{\sfrac32}}\mu_l(B_{\lambda^q}(x))\,
\Delta^{18\lambda^{q}}_{\lambda^{q}}
(x)\,\d \mu_l(x)\notag
\allowdisplaybreaks\\
& \stackrel{\eqref{e:El}}{\leq} 
2\,\omega_{n-1}\,C_{\ref{p:mean-flatness vs freq}}
\int_{B_{\sfrac32}} \sum_{q=q_0}^{+\infty}
\Delta^{\lambda^{q-1}}_{\lambda^{q}}(x)\,\d 
\mu_l(x)\notag
\allowdisplaybreaks\\
&\leq C \int_{B_{\sfrac32}} I_v(x,1)\d \mu_l(x)<+\infty,
\end{align}
where we used:
Fubini's Theorem
and $1+\lambda^{q_0}\leq \sfrac{3}2$
in the second inequality, 
$18\lambda^{}< 1$ in the third, and $\lambda^{q_0-1}\leq 1$
and $\mu(B_{\sfrac32})<+\infty$ (by Theorem~\ref{t:misura})
in the last line.
The conclusion now follows straightforwardly: indeed, by 
\eqref{e:jones numbers4} we have that
\[
\sum_{q\in\N}\beta_{\mu}^2\big(y, \lambda^{q}\big)<+\infty
\quad\text{for $\mu_l$-a.e.~$y \in B_{1}$.}
\]
In view of \eqref{e:intbetadiscreto}, we can then apply 
Theorem~\ref{t:AT} to conclude that $E_{l}$ is 
$\cH^{n-1}$-rectifiable.
\hfill\qedsymbol
\begin{remark}
The rectifiability of the free boundary can also be deduced by following 
the argument of Naber--Valtorta \cite{NaVa1,NaVa2}, along the proof of the 
covering argument and the discrete Reifenberg Theorem:
we refer to \cite{NaVa1,NaVa2}
for more details.
\end{remark}

%
%
\section{Classification of blow-ups $\cH^{n-1}$-a.e.}\label{s:blow-up}
In this section we give the proof of the last main
result of the paper, namely Theorem~\ref{t:frequency}.
We recall the rescalings for the blow-up procedure:
\begin{equation}\label{e:rescaling}
u_{x_0,r} (y) := \frac{r^{\frac{n+a}{2}}\,u(ry+x_0)}{H^{\sfrac12}(x_0,r)}
\quad \forall \; r < 1 - |x_0|, \;\forall\; y \in B_1.
\end{equation}
In view of Remark~\ref{r:freq modificata}, the functions $\bar u_{x_0,r}$
and $u_{x_0,r}$ have limits which differ only by a multiplicative
constant. Therefore, Theorem~\ref{t:frequency}
is proven once we show the same conclusions for the
new rescalings \eqref{e:rescaling}.

\subsection{Stratification of the free boundary}
We start off with the first part of 
Theorem~\ref{t:frequency} regarding the
estimate on the dimension of the set of points with frequency
$\lambda \in \{2m, 2m-1+s, 2m+2s\}_{m\in\N\setminus\{0\}}$.
We use a stratification argument for the nodal set 
$\mathcal{N}(u)$ of a solution $u$ to the lower dimensional
obstacle problem \eqref{e:ob-pb local}.
This argument goes back to the work of Almgren \cite[{\S} 2.26]{Alm00};
here for convenience we follow \cite{FMS-15}.

\medskip

We start recalling the definition of nodal points:
\[
\mathcal{N}(u) := \Big\{
(x',0)\in B_R'\,:\,u(x',0) = |\nabla_\tau u(x',0)|
= \lim_{t\downarrow 0^+}t^a\de_{n+1}u(x',t) = 0
\Big\}.
\]
Next we specify the main ingredients of 
\cite[{\S} 3.1]{FMS-15} for the thin obstacle problem:
\begin{itemize}
\item[(a)] the upper semi-continuous function $f:B_1' \to \R$ given by
\[
f(x) :=
\begin{cases}
I_u(x,0^+) & \text{if } \;x\in\mathcal{N}(u),\\
0 & \text{if } \;x\not\in\mathcal{N}(u).
\end{cases}
\]

\item[(b)] the compact class of conical functions $\mathcal{G}(x) \subset
L^\infty(\R^{n})$, for every $x\in B_1'$, defined by
\[
\mathcal{G}(x):=
\begin{cases}
\big\{I_w(\cdot, 0^+)\;:\; w \in \text{BU}(x)\big\} & \text{if } 
\;x\in\mathcal{N}(u),\\
\{0\} & \text{if } \;x\not\in\mathcal{N}(u),
\end{cases}
\]
recall that $\text{BU}(x)$
denotes the set of all blow-ups of $u$ at $x$.
\end{itemize}
We need to verify that $\cG(x)$ is a class of
\textit{compact conical} functions
according to \cite[Definition 3.3]{FMS-15}
(the arguments are analogous to those in 
\cite[{\S} 5.2]{FMS-15}, we repeat them for readers'
convenience).
\begin{itemize}
\item[(1)] An upper semi-continuous function $g:\R^n\to\R$ is said 
to be \textit{conical} if $g(z) = g(0)$ implies that
\[
g(z+\lambda \,x) = g(z +x)\quad\forall\;x\in\R^n,\quad\forall\;\lambda>0.
\]
Then, both the zero function and the frequency of homogeneous solutions
$w$ are conical by Lemma~\ref{l:spine}.

\item[(2)] A class $\cG$ of conical functions is called \textit{compact}
if for every sequence $(g_j)_{j\in\N}\subset \cG$ there exist a 
subsequence $(g_{j_i})_{i\in\N}\subset(g_j)_{j\in\N}$ and $g\in \cG$ 
such that
\begin{equation}\label{e:classe compatta}
\limsup_{i\to+\infty} g_{j_i}(y_i) \leq g(y)
\quad\quad \forall\;y\in\R^n,\;\forall\;
(y_i)_{i\in\N}\subset\R^n\;\text{ with }\; y_i\to y.
\end{equation}
According to item (b), if $(g_j)_{j\in\N}\subset\cG(x)$ we may assume 
without loss of generality $g_j$ not identically $0$ for $j$ big.
Then, $g_j = I_{w_j}(\cdot, 0^+)$ and by Lemma~\ref{l:lim uniforme}
and Corollary~\ref{c:compactness} there exists a subsequence $w_{j_i}$
converging to a homogeneous solution $w$ (recall that 
$I_{w_j}(1) = I_u(x,0^+)$ and $H_{w_j}(1) = 1$).
By a diagonal argument we have that $w\in \text{BU}(x)$,
and \eqref{e:classe compatta} follows from
\begin{align*}
\limsup_{i\to+\infty} I_{w_{j_i}}(y_i, 0^+) \leq \inf_{s>0}
\limsup_{i\to+\infty} I_{w_{j_i}}(y_i, s) = \inf_{s>0} I_{w}(y, s) = 
I_{w}(y,0^+).
\end{align*}
\end{itemize}

We discuss next the structural hypotheses  
\cite[(i) - (ii) {\S} 3.1]{FMS-15}:
\begin{itemize}
\item[(i)] $g(0) = f(x)$ for all $g\in\cG(x)$, because
$I_w(0^+) = I_u(x,0^+)$ for every blowup $w\in \text{BU}(x)$;

\item[(ii)] for all $r_j\downarrow 0$ there exists a subsequence
$(r_{j_i})_{i\in\N} \subset (r_{j})_{j\in\N}$
and $w\in \cG(x)$
such that $u_{x,r_{j_i}} \to w$; hence,
for every $y\in\R^n$ and for every sequence $y_i\to y$,
we have
\begin{align*}
\limsup_{i\to+\infty} I_u(x+r_{j_i}y_j, 0^+) & \leq
\inf_{s>0}
\limsup_{i\to+\infty} I_u(x+r_{j_i}y_j, r_{j_i}\,s)\\
&= \inf_{s>0}
\limsup_{i\to+\infty} I_{u_{x,r_{j_i}}}(y_j, s)
= \inf_{s>0} I_{w}(y, s) 
= I_w(y, 0^+).
\end{align*}
\end{itemize}

\medskip

We are then in the position to apply \cite[Theorem 3.4]{FMS-15}
and conclude that the points whose blow-ups have
spines with dimension not exceeding $l\in \{0,\ldots, n\}$
constitute a set of Hausdorff dimension at most $l$.

\begin{theorem}\label{t:straFICA}
Let $u$ be a solution of the thin obstacle problem \eqref{e:ob-pb local} in 
$B_R$.
For $l\in \{0,\ldots, n\}$, set $\Sigma_l(u) := \{x\in \mathcal{N}(u):\, 
\dim S(w) \leq l,\;\forall\;w\in\textup{BU}(x)\}$.
Then, $\Sigma_0(u)$ is at most countable and 
$\dim_{\cH}\Sigma_l(u)\leq l$
for every $l\in\{1,\ldots, n\}$.
\end{theorem}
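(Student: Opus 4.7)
The plan is to apply verbatim the abstract stratification principle \cite[Theorem~3.4]{FMS-15}, whose hypotheses have just been checked in the paragraphs preceding the statement. The only substantive translation between the two setups is to identify the invariance subspace of the conical function $g = I_w(\cdot,0^+) \in \cG(x)$ (the notion of ``spine'' used in \cite{FMS-15}) with the geometric spine $S(w)$ of the blow-up $w$ introduced in Definition~\ref{d:spine}. This identification is immediate from Lemma~\ref{l:spine}: a vector $y\in\R^n\times\{0\}$ is a direction of translation invariance of the conical function $I_w(\cdot,0^+)$ if and only if $I_w(y,0^+) = I_w(0,0^+)$, which by the equivalence (i)$\Leftrightarrow$(iii) in that lemma is the same as $y\in S(w)$.

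First, I would record the preceding verifications as a single lemma: the function $f$ of item~(a) is upper semi-continuous (this is a direct consequence of the monotonicity of the frequency in Proposition~\ref{p:monotonia+lower} together with the compactness of solutions in Corollary~\ref{c:compactness}), and the family $\{\cG(x)\}_{x\in B_1'}$ is a class of compact conical functions satisfying the structural assumptions (i)--(ii) of \cite[{\S}~3.1]{FMS-15}, as was spelled out above using Lemma~\ref{l:spine}, Lemma~\ref{l:lim uniforme} and Corollary~\ref{c:compactness}. Second, with these hypotheses in place I would invoke \cite[Theorem~3.4]{FMS-15}, which for every $l\in\{0,\ldots,n\}$ provides the estimate
\[
\dim_{\cH}\bigl\{x\in B_1' : \dim\operatorname{Spine}(g)\leq l\ \ \forall\, g\in\cG(x)\bigr\}\leq l,
\]
with the $l=0$ stratum being at most countable. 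Third, substituting the identification $\operatorname{Spine}(I_w(\cdot,0^+)) = S(w)$ for each $w\in\mathrm{BU}(x)$, the set on the left-hand side is exactly $\Sigma_l(u)\cup(B_1'\setminus\mathcal{N}(u))$ truncated to the region of interest; since outside $\mathcal{N}(u)$ the class $\cG(x)=\{0\}$ has spine $\R^n$ (which has dimension $n > l$ for $l<n$), only the nodal points contribute, and one reads off precisely the conclusion $\dim_{\cH}\Sigma_l(u)\leq l$ (and $\Sigma_0(u)$ countable). For the case $l=n$ the statement is vacuous since $\Sigma_n(u)\subseteq\mathcal{N}(u)\subseteq B_1'$ has dimension at most $n$ automatically.

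The only delicate point — and essentially the main obstacle in applying the abstract machinery — is verifying rigorously the semicontinuity property (ii) of \cite[{\S}~3.1]{FMS-15} at non-nodal points $x$ (where $\cG(x)=\{0\}$): one must check that, as $y_i\to y$ along a blow-up sequence $x+r_{j_i}y_i$, the limsup of $I_u(x+r_{j_i}y_i,0^+)$ is controlled by the value $0$ prescribed by the zero function. This is not entirely trivial since the frequency can jump upwards under perturbation, but it follows because outside $\mathcal{N}(u)$ the solution $u$ is (locally) analytic and thus has vanishing frequency at infinity in the rescaled picture, so the analysis reduces to the nodal portion of the sequence, which is handled as in item~(ii) above via Corollary~\ref{c:compactness} and a diagonal argument. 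Once this semicontinuity is in hand, the remainder of the proof is a pure citation.
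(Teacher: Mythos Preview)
Your approach is the same as the paper's: invoke \cite[Theorem~3.4]{FMS-15} once the structural hypotheses of \cite[\S~3.1]{FMS-15} have been verified, and identify the abstract spine of $I_w(\cdot,0^+)$ with $S(w)$ via Lemma~\ref{l:spine}. That identification is a useful remark the paper leaves implicit.

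Your final paragraph, however, misidentifies a non-issue as ``the main obstacle'' and then resolves it incorrectly. For $x\notin\mathcal{N}(u)$ the verification of hypothesis~(ii) is trivial: $\mathcal{N}(u)$ is closed (it is the common zero set of the continuous functions $u$, $|\nabla_\tau u|$, and $\lim_{t\downarrow 0}t^a\partial_{n+1}u$, cf.~Theorem~\ref{t:reg}), so there is an open neighbourhood of $x$ disjoint from $\mathcal{N}(u)$. Since $y_i\to y$ forces $|y_i|$ to be bounded, for $r_{j_i}$ small enough every point $x+r_{j_i}y_i$ lies in that neighbourhood and hence $f(x+r_{j_i}y_i)=0$ identically. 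No appeal to analyticity, ``vanishing frequency at infinity'', or a diagonal argument is needed; your proposed resolution via those ideas does not actually make sense as written.
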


\medskip

The first assertion of Theorem~\ref{t:frequency}
is now a direct consequence.

\begin{proof}[Proof of Theorem~\ref{t:frequency}: part I]
We first show that $\dim S(w)\leq n-1$
for every $w\in \textup{BU}(x)$ with $x\in \mathcal{N}(u)$.
To this aim, we observe that by the definition of nodal set
we have that $0\in \mathcal{N}(w)$ 
for every $w\in \textup{BU}(x)$ with $x\in \mathcal{N}(u)$. 
On the other hand, using the notation in Theorem~\ref{t:straFICA}, 
$\Sigma_n(u)\setminus \Sigma_{n-1}(u)=\emptyset$
as noticed in Section~\ref{s:classification}. Indeed, the
only non-trivial homogeneous solutions with $n$-dimensional spine
are the functions $w_c:=c\,|x_{n+1}|^{2s}$ with $c<0$,
and by direct computation $\mathcal{N}(w_c) = \emptyset$.

Therefore, for every $x\in \Gamma(u) \setminus\Sigma_{n-2}(u)$
there exists at least a blowup $w \in \textup{BU}(x)$
with an $(n-1)$-dimensional spine $S(w)$, {\ie}~with $w\in \cH^\top$.
Thus, by the classification of all homogeneous solutions with maximal 
spine
in Lemma~\ref{l:classification}, 
the limiting frequency at any point $x\in \Gamma(u) \setminus\Sigma_{n-2}(u)$
satisfies
\[
I_u(x,0^+) = I_w(0,0^+) \in  \{2m, 2m-1+s, 
2m+2s\}_{m\in\N\setminus\{0\}}.
\]
Taking into consideration that $\dim_{\cH}\Sigma_{n-2}(u)\leq n-2$
by Theorem~\ref{t:straFICA}, we conclude the proof.
\end{proof}

\subsection{Uniqueness of blow-ups with frequency $2m$ and $2m-1+s$}
For the second part of Theorem~\ref{t:frequency}
we need an extension of the classification result in 
Lemma~\ref{l:classification}
to the $\lambda$-homogeneous (even symmetric 
with respect to $x_{n+1}$) solutions of
\begin{equation}\label{e:PDEz-0}
 \begin{cases}
\div(|x_{n+1}|^a\nabla u)=0 & B_1\setminus\Lambda(u)\cr
 u=0 & \Lambda(u),
\end{cases}
\end{equation}
with $\lambda\in\{2m,2m-1+s\}_{m\in\N\setminus\{0\}}$ and 
$\{x \cdot e=x_{n+1}=0\} \subseteq \Lambda(u)$
for some unit vector $e\in\R^n\times\{0\}$.
The main differences with Lemma~\ref{l:classification}
are that  neither the unilateral obstacle 
condition nor any invariance assumption of the solutions
({\ie}~the assignment of  the spine) are imposed in this
framework. In the ensuing statement we keep the 
notation introduced in Lemma~\ref{l:classification}.

\begin{proposition}\label{p:classiFICAzione 2d improved-0}
Let $u:\R^{n+1}\to\R$ be a non-trivial
$\lambda$-homogeneous weak solution
of \eqref{e:PDEz-0}, even w.r.to 
$x_{n+1}$, such that $\lambda\in\{2m,2m-1+s\}_{m\in\N\setminus\{0\}}$
and 
$\{x \cdot e=x_{n+1}=0\} \subseteq\Lambda(u)$
for some unit vector $e\in\R^n\times\{0\}$.
Then, there exists $c>0$ such that
$u(x) =c\,h_{2m}(x\cdot e, x_{n+1})$ or
$u(x) =c\,h_{2m-1+s}(x\cdot e, x_{n+1})$ or
$u(x) =c\,h_{2m-1+s}(-x\cdot e, x_{n+1})$.
\end{proposition}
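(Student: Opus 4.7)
The strategy is to reduce the classification to the two-dimensional one already established in Proposition~\ref{p:classiFICAzione 2d} of the appendix, by first proving that $u$ is invariant under translations along the $(n-1)$-dimensional subspace $L := \{x\cdot e = x_{n+1} = 0\}$. Assume without loss of generality $e = e_1$, so that $L = \operatorname{span}(e_2,\ldots,e_n)$. The heart of the proof is the claim that $L\subseteq S(u)$, that is, every direction in $L$ is an invariance direction of $u$. Once this is known, there exists a two-dimensional $\tilde u:\R^2\to\R$ with $u(x) = \tilde u(x\cdot e, x_{n+1})$, and the conclusion follows from the 2D classification.

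To establish $L\subseteq S(u)$, fix $y\in L$ and consider the Almgren frequency $I_u(y,r)$, well-defined since $y\in\Lambda(u)$. Using the $\lambda$-homogeneity of $u$ from the origin, the rescalings $u_{y,r}(x) := r^{-\lambda}u(y+rx)$ converge to $u$ in $H^1_{\loc}(\R^{n+1},|x_{n+1}|^a\cL^{n+1})$ as $r\to+\infty$, yielding $\lim_{r\to+\infty}I_u(y,r) = \lambda$; together with the monotonicity of Proposition~\ref{p:monotonia+lower} this gives $I_u(y,r)\leq \lambda$ for every $r>0$. For the matching lower bound, one analyses a blow-up $u_0$ of $u$ at $y$: it is $\mu$-homogeneous with $\mu = I_u(y,0^+)$, even in $x_{n+1}$, still solves \eqref{e:PDEz-0}, and because $L$ is a linear subspace through $y$ also satisfies $L\subseteq \Lambda(u_0)$. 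Combining the discrete nature of admissible homogeneities for solutions of the thin obstacle problem with the rigidity enforced by the inclusion $L\subseteq \Lambda(u_0)$ rules out $\mu<\lambda$, so that $I_u(y,\cdot)\equiv \lambda$. The equality case in Proposition~\ref{p:monotonia+lower} then implies that $u$ is $\lambda$-homogeneous about $y$ as well, and the last paragraph of the proof of Lemma~\ref{l:spine}, applied to the combined $\lambda$-homogeneity of $u$ from $0$ and from $y$, gives $u(\cdot+y) = u$. Since $y\in L$ is arbitrary, $L\subseteq S(u)$.

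With this reduction in hand, $\tilde u$ is a non-trivial $\lambda$-homogeneous weak solution of the two-dimensional version of \eqref{e:PDEz-0}, even in its second argument. Invoking Proposition~\ref{p:classiFICAzione 2d}, which carries out the ODE analysis of \eqref{e:polar coord} under boundary conditions of types \eqref{e:polar bd I}--\eqref{e:polar bd III} without imposing the unilateral obstacle constraint, $\tilde u$ is a positive multiple of $h_{2m}(x_1, x_{n+1})$ in the even-integer case and of $h_{2m-1+s}(\pm x_1, x_{n+1})$ in the half-integer-shifted case, which gives the three alternatives of the statement after absorbing the sign into $c>0$.

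The main obstacle lies in the lower bound $I_u(y,r)\geq \lambda$ for $y\in L$. The blow-down half is immediate from pure $\lambda$-homogeneity, but a priori the blow-up frequency $\mu = I_u(y,0^+)$ can be strictly smaller than $\lambda$. Ruling out $\mu<\lambda$ requires combining the discreteness of admissible frequencies for homogeneous solutions with the inclusion $L\subseteq \Lambda(u_0)$ (which forces sharp structural constraints on the blow-up $u_0$), and in the application of this proposition within Section~\ref{s:blow-up} one implicitly exploits that $u$ originates as a blow-up of a genuine solution of the thin obstacle problem, so that the unilateral character propagates to $u_0$ and excludes degenerate configurations that would otherwise allow $\mu<\lambda$.
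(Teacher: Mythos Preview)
Your high-level plan---show that $L:=\{x\cdot e=x_{n+1}=0\}\subseteq S(u)$ and then quote the two-dimensional classification---is the same endpoint the paper reaches, but the paper arrives there by a completely different mechanism, and the step you yourself flag as ``the main obstacle'' is a genuine gap that cannot be closed along the lines you sketch.

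The paper never uses the frequency function here. It argues by induction on the degree (Proposition~\ref{p:classiFICAzione 2d improved} and Lemma~\ref{l:2hom polyn}): each tangential derivative $\partial_j u$ along a direction of $L$ is again a solution of \eqref{e:PDEz} with homogeneity $\lambda-1$ and $L\subseteq\Lambda(\partial_j u)$, so by induction it has an explicit expansion $\sum_k p_{j,k}(x'')\Phi_{m-1-k}$ (resp.\ with $\Psi_{m-1-k}$). Integrating back yields $u=\sum_{k} p_k(x'')\,\Phi_{m-k}$ with each $p_k$ harmonic and $k$-homogeneous in $x''$. Only at this point does the unilateral constraint enter: the sign condition $u(x',0)\geq 0$ forces each nonconstant $p_k$ to have a fixed sign, which is impossible for a nonzero harmonic homogeneous polynomial, so $p_k\equiv 0$ for $k\geq 1$. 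This collapses the expansion to a function of $(x\cdot e,x_{n+1})$ and establishes $L\subseteq S(u)$ \emph{a posteriori}.

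Your frequency route cannot replace this. First, for bare solutions of \eqref{e:PDEz-0} the intermediate claim $L\subseteq S(u)$ is simply \emph{false}: for instance $u(x)=x_2\,\Phi_{m-1}(x_1,x_{n+1})+c\,\Phi_m(x_1,x_{n+1})$ with $e=e_1$ satisfies every hypothesis but depends nontrivially on $x_2$. So the obstacle constraint is indispensable, as you acknowledge. But even granting it, the ``discreteness'' you invoke does not exist: the known set of admissible frequencies is $\{1+s\}\cup[2,\infty)$, so nothing in your argument excludes that the blow-up of $u$ at some $y\in L$ is $\mu$-homogeneous with $1+s\leq\mu<\lambda$ (for instance $\mu=1+s$, realized by $h_{1+s}$, which also has $L\subseteq\Lambda(h_{1+s})$). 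Showing $\mu=\lambda$ for every $y\in L$ is tantamount to the invariance you are trying to prove, so the argument becomes circular. The differentiation plus sign-constraint mechanism is what actually does the work.
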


The proof is postponed to Proposition~\ref{p:classiFICAzione 2d improved}
in the appendix. Given it for granted, we proceed with the conclusion
of the proof of Theorem~\ref{t:frequency}.

\begin{proof}[Proof of Theorem~\ref{t:frequency}: part II]
By Theorem~\ref{t:rect} there exist at most
countably many $C^1$-regular submanifolds $\{M_i\}_{i\in\N}$ such
that $\cH^{n-1}(\Gamma(u) \setminus \cup_{i\in \N} M_i) = 0$.
We consider the sets $\Gamma_i(u):= \Gamma(u)\cap M_i$
and
\[
\Gamma_i'(u) := \left\{ x \in \Gamma_i(u) \,:\;
\lim_{r\downarrow 0^+}\frac{\cH^{n-1}(B_r(x_0)\cap 
\Gamma_i(u))}{\omega_{n-1}\,r^{n-1}} = 1
\right\}.
\]
Note that $\cH^{n-1}(\Gamma_i(u) \setminus \Gamma_i'(u)) = 0$ for every
$i\in\N$ by Besicovitch's differentiation theorem 
(cp.~\cite[Theorem~2.22]{AmFuPa}).
We show that for every $i\in\N$ and for every $x_0 \in \Gamma_i'(u)$
the conclusion of Theorem~\ref{t:frequency} holds, namely
if $I(x_0, 0^+) = \lambda \in \{2m, 2m-1+s\}_{m\in \N\setminus 
\{0\}}$,
then there exists a unit vector $e_{x_0}$ with 
$e_{x_0}\perp \text{Tan}_{x_0} M_i$ at $x_0$ such that
\begin{equation}\label{e:riformulazione}
u_{x_0,r} \to h_{\lambda}(x\cdot e_{x_0},x_{n+1})
\quad\text{in $H^1_{\loc}(\R^{n+1}, |x_{n+1}|^a\cL^{n+1})$
as $r \downarrow 0$},
\end{equation}
where $h_\lambda$ are the functions in Lemma~\ref{l:classification}, 
and
$\text{Tan}_{x_0} M_i$ is the linear tangent space to $M_i$ at $x_0$: {\ie}
\[
v\in \text{Tan}_{x_0} M_i \quad \Longleftrightarrow \quad
\exists\; (x_l)_{l\in \N} \subset M_i \; \text{ such that }
\; \lim_{l\to +\infty} \frac{x_l-x_0}{|x_l - x_0|} = v. 
\]
To this aim we consider the compact sets
\[
K_r := 
\Big\{ y \in \overline{B}_1 \,:\,x_0 + r\,y \in \Gamma_i(u) \Big\}
\quad\text{for}\quad
r \leq r_0:=\frac{1}{2}\,(1-|x_0|).
\]
By Blaschke compactness theorem (cp.~\cite[Theorem~6.1]{AmFuPa})
the sequence of sets
$\{K_r\}_{r\in (0,r_0]}$ is pre-compact in the Hausdorff distance 
on $\overline{B}_1$: namely,
given any sequence $(0,r_0]\ni r_i \to 0$, there exists
a subsequence $(r_{i_k})_{k \in \N}$
and a compact set $K_0 \subseteq \overline{B}_1$
such that we have
$\lim_{k}\dist_{\cH}(K_{r_{i_k}}, K_0) = 0$, {\ie}
\begin{itemize}
\item[(a)] any point $x \in K_0$ is an accumulation point for a 
sequence 
$(x_k)_{k\in \N}$ with $x_k \in K_{r_{i_{k}}}$;

\item[(b)] if $x_k \in K_{r_{i_k}}$, then any 
accumulation point of $(x_k)_{k\in \N}$
belongs to $K_0$.
\end{itemize}
We proceed now with the proof of \eqref{e:riformulazione}
in three steps.

\medskip

\noindent{\bf 1.}
Let $r_j \downarrow 0$ be such that
$\dist_{\cH}(K_{r_{j}}, K_0) \to 0$ for some compact set $K_0$.
Then
\[
\text{Tan}_{x_0} M_i\cap \overline{B}_1 \subseteq K_0.
\]
Assuming this is not the case, there exists an open ball
$B_\rho(y_0) \subset B_1 \subset \R^n$ with $y_0 \in \text{Tan}_{x_0} M_i$
such that $\big(\Gamma_i(u) - x_0\big)/r_j \cap B_\rho(y_0) = 
\emptyset$.
In particular, for sufficiently large $j$ we have that
\begin{align*}
\cH^{n-1}&\big(\sfrac{\big(\Gamma_i(u)-x_0\big)}{r_{j}}
\cap \overline{B}_1 \big)
=\cH^{n-1}\big(
\sfrac{\big(\Gamma_i(u) - x_0\big)}{r_{j}} \cap 
\overline{B}_1\setminus B_\rho(y_0) \big)\\ 
&\leq \cH^{n-1}\big(\sfrac{\big(M_i - x_0\big)}{r_{j}} \cap 
\overline{B}_1\setminus B_\rho(y_0) \big)
\leq \big(1+o(1)\big)\,\omega_{n-1}
\,\big(1- \rho^{n-1}\big)\quad\text{as $j\to+\infty$},
\end{align*}
since $\cH^{n-1}\big(\sfrac{\big(M_i - x_0\big)}{r_{j}}\cap A\big) \to 
\cH^{n-1}\big(\text{Tan}_{x_0} M_i \cap A\big)$
for every open set $A \subset \R^n$.
This yields
\[
\limsup_{j\to+\infty} \frac{\cH^{n-1}\big(
\Gamma_i(u) \cap B_{r_j}(x_0)\big)}{\omega_{n-1} 
r_j^{n-1}} \leq 1 - \rho^{n-1},
\]
against the assumption $x_0\in \Gamma'_i(u)$.

\medskip

\noindent{\bf 2.} In particular, it follows that,
if $u_{x_0}$ is any blow-up limit of $u$ at $x_0 \in \Gamma'_i(u)$, then
\begin{equation}\label{e:contenimento}
\text{Tan}_{x_0} M_i
\subseteq \big\{ u_{x_0} = 0\big\}.
\end{equation}
Indeed, set $Y_r:=\big\{u_{x_0,r_{}} = 0 \big\} \cap \overline{B}_1$
and note that $K_{r_{}}\subset Y_r$.
If $Y_0 \subset \overline{B}_1$ is any Hausdorff limit
of a sequence $Y_{r_j}$, then 
$Y_0 \subset \big\{u_{x_0} = 0 \big\} \cap \overline{B}_1$, because
$u_{x_0,r_{j}}(z_j) \to u_{x_0}(z_0)$ for every $z_j\in Y_{r_j}$ with 
$z_j\to z_0$ (thanks to
the uniform convergence of $u_{x_0,r_{i_k}}$).
In particular, being $K_{r_j}\subset Y_{r_j}$, the conclusion follows
from step 1 and the homogeneity of $u_{x_0}$.

\medskip

\noindent{\bf 3.}
We now conclude the proof of \eqref{e:riformulazione}.
Assume without loss of
generality that $\text{Tan}_{x_0} M_i = \{x_{n} = x_{n+1} = 0\}$.
By Proposition~\ref{p:blow-up} we have that
$\textup{BU}(x_0)\subseteq\cH_{\lambda}$
with $\lambda=I_u(x_0,0^+)$, and
we distinguish two possibilities
(recall also that the blow-ups are renormalized
so to have $H_{u_{x_0}}(1)=1$):
\begin{itemize}
\item[(1)] $I_u(x_0,0^+) = 2m$. By
Proposition~\ref{p:classiFICAzione 2d improved-0} 
the blow-up $u_{x_0}$ needs to be
$h_{2m}(x_{n}, x_{n+1})$, because this 
function is the only blow-up with frequency 
$2m$ and contact set containing 
$\text{Tan}_{x_0} M_i = \{x_{n} = x_{n+1} = 0\}$
by \eqref{e:contenimento};

\item[(2)] $I_u(x_0,0^+) = 2m -1+2s$.
By
Proposition~\ref{p:classiFICAzione 2d improved-0} 
every blow-up $u_{x_0}$ is given by either
$h^+ = h_{2m-1+s} (x_n,x_{n+1})$ or $h^-=h_{2m-1+s} (-x_n,x_{n+1})$.
\end{itemize}
In order to infer the uniqueness of the blowup in this last case, we exploit
the connectedness of the set of blow-up limits.
Namely, assume that there exist $r_i\downarrow 0$
and $\rho_i\downarrow 0$ such 
that $u_{x_0,r_i} \to h^+$ and $u_{x_0,\rho_i} \to h^-$; up
to passing to subsequences, we may take $r_i < \rho_i < r_{i+1}$. 
Then, by continuity there exists $t_i \in (r_i , \rho_i)$ such that 
\[
\|u_{x_0,t_i} - h^+\|_{L^2(B_1)} = \|u_{x_0,t_i} - h^-\|_{L^2(B_1)}.
\]
Since the sequence $(u_{x_0, t_i})_{i\in\N}$ has no subsequence 
converging either to $h^+$ or to $h^-$, this gives a contradiction 
and 
concludes the proof of Theorem~\ref{t:frequency}.
\end{proof}

\subsection{Concerning the optimality of Theorem~\ref{t:frequency}}
\label{ss:optimality}
For every $e\in \R^{n+1}$ with $|e|=1$ and $e\cdot e_{n+1} =0$,
the functions 
$u(x)= h_{2m}(x\cdot e, x_{n+1})$ and $u=h_{2m-1+s}(x\cdot e, 
x_{n+1})$ are examples of solutions
to the lower dimensional problem \eqref{e:ob-pb local} in any ball $B_R$
whose free boundary $\Gamma(u)$ is $(n-1)$-dimensional and is
made of points with frequency $2m$ and $2m-1+s$, respectively.
Note that the latters are explicit cases in which $\Gamma(u) = \other(u)$.

\medskip

On the other hand, as pointed out
in the introduction, at the best of our knowledge there are no 
explicit examples of solutions to the lower dimensional obstacle 
problem \eqref{e:ob-pb local} with free boundary points with 
frequency $2m+2s$ with $m\in\N\setminus\{0\}$
(note that, although $h_{2m+2s}(x\cdot e, x_{n+1})$
are solutions, $\Gamma(h_{2m+2s}) = \emptyset$).

Such points do not occur in the one dimensional case $n=1$.
Following the argument of \cite[Remark~1.2.8]{GaPe09} for $s=\sfrac12$, assume 
that $0\in \Gamma(u)$ is a point with frequency $2m+2s$. Then, 
one can find a sequence $(t_k, 0)\in\R^2$ with $\lim_k t_k =0$
such that $u(t_k,0)> 0$ and, therefore, from \eqref{e:bd condition1},
\begin{equation}\label{e:prima condizione}
\lim_{x_2\downarrow 0} x_2^a\de_{x_2}u(t_k,x_2) = 0.
\end{equation}
Taking the rescalings $u_{0,\sfrac{t_k}{2}}$,
up to passing to a subsequence (not relabeled) there exists
a blowup $w\in \textup{BU}(0)$ such that
(cp.~\eqref{e:cpt3}):
\[
\sign(x_2)|x_2|^a\de_{x_2}u_{0,\sfrac{t_k}{2}} \to
\sign(x_2)|x_2|^a\de_{x_2}w 
\quad\text{in }\; C^0_{\loc}(B_1).
\]
Note that necessarily $w=h_{2m+2s}$, because 
there exists a unique blowup with frequency $2m+2s$.
Moreover, from \eqref{e:prima condizione} we have that
$\lim_{x_2\downarrow 0} x_2^a\de_{x_2}w(\sfrac12,x_2) = 0$.
On the contrary a direct computation shows that 
$\lim_{x_2\downarrow 0} x_2^a\de_{x_2}h_{2m+2s}(\sfrac12,x_2) < 0$,
thus leading to a contradiction and implying that there cannot 
exist free boundary points with frequency $2m+2s$ for $n=1$.

Potential points with frequency $2m+2s$ are sometimes referred to in 
the literature as \textit{degenerate points} (see the final section 
of \cite{GaPe09}).
It is a tempting conjecture to claim that 
there are actually none.
If this were the case, Theorem~\ref{t:frequency} would
then be optimal, both concerning the uniqueness of blow-ups
at $\cH^{n-1}$-almost all points of the free boundary,
and the classification of the frequency at $\cH^{n-2}$-almost all 
points of the free boundary.

%
%
\appendix

\section{Homogeneous solutions}\label{a:appendix}
In this appendix we collect some results concerning
homogeneous solutions to the thin obstacle problem
and more generally
to the corresponding system of Euler--Lagrange
equations. Therefore, we consider functions
$u:\R^{n+1} \to \R$ such that
\[
u(x) = |x|^\lambda u \,\big(\sfrac{x}{|x|}\big) 
\quad \forall\; x\neq 0
\]
for some $\lambda \geq 1+s$ (this restriction 
being in accordance with the homogeneity
of all possible blow-ups).

\subsection{Two-dimensional homogeneous solutions}
Here we provide a classification of the homogeneous
solutions to the equation
\begin{equation}\label{e:sistema 2d}
\begin{cases}
\div(|x_2|^a\nabla u)=0 & B_1\setminus \Lambda(u),\\
u=0 & \Lambda(u),
\end{cases}
\end{equation}
in the two dimensional case, \ie~for $n=1$.
Thus, necessarily, the contact set
is a cone, and we have:
\begin{itemize}
\item[(i)] $\Lambda(u)=\{x_1=x_{2}=0\}$,
\item[(ii)] $\Lambda(u)=\{x_1\leq 0,\, x_{2}=0\}$
or $\Lambda(u)=\{x_1\geq 0,\, x_{2}=0\}$,
\item[(iii)] $\Lambda(u)=\{x_{2}=0\}$.
\end{itemize}
Correspondingly, we introduce three classes of functions $\Phi_m$,
$\Psi_m$ and $\Pi_m$ for $m\in \N$, that are explicitly defined as follows:
\begin{gather}
\Phi_m(x_1,x_2):=\sum_{k=0}^{\lfloor \sfrac m2\rfloor}
\alpha_k\,x_1^{m-2k}x_2^{2k},\label{e:polyn b}\\
\Psi_m(x_1,x_2):=
\Big(\sqrt{x_1^2+ x_2^2}+x_1\Big)^s\sum_{k=0}^{m}
\beta_k\Big(\sqrt{x_1^2+x_2^2}-x_1\Big)^{k}
\Big(\sqrt{x_1^2+x_2^2}\Big)^{m-k},\label{e:hy-geo1b}\\
\Pi_m(x_1,x_2):=|x_2|^{2s}\sum_{k=0}^{\lfloor \sfrac{m}2\rfloor}
\gamma_k x_2^{2k} \frac{\d^{2k}}{\d x^{2k}} p(x_1),\label{e:PI}
\end{gather}
where
$\lfloor \cdot\rfloor$ denotes
the integer part of a real number, and
\[
\alpha_0:=1,\quad
\alpha_{k+1}:=-\frac{(m-2k)(m-2k-1)}{4(k+1)(k+1-s)}\,\alpha_k
\qquad k\in\{0,\ldots,\lfloor \sfrac m2\rfloor-1\}, 
\]
\[
\beta_k:=\frac{(m+1)_k(-m)_k}{2^kk!\,(1-s)_k}
\qquad k\in\{0,\ldots,m\},
\]
\[
\gamma_k:=\frac{(-1)^k}{4^k\,k!\,(1+s)_k}\qquad
\text{$k\in\{0,\ldots, \lfloor \sfrac{m}{2}\rfloor\}$,\quad
$p$ is a $m$-homogenous polynomial,}
\]
and the (increasing) Pochhammer symbol is defined by
\[
(q)_l :=
\begin{cases}
1 & \text{if } l=0,\\
q\,(q+1)\cdots(q+l-1) & \text{if } l\in\N\setminus\{0\}.
\end{cases}
\]
We establish the ensuing classification result 
(for related issues see \cite{CaSaSi08,Ro-Se}).

\begin{proposition}\label{p:classiFICAzione 2d}
Let $u:\R^2\to\R$ be $\lambda$-homogeneous, even symmetric 
w.r.to $x_2$, and assume 
that $u$ is a weak solution of \eqref{e:sistema 2d}.
Then, one of the following occurs:
\begin{itemize}
\item[(i)] $\Lambda(u)=\{x_1=x_2=0\}$, $\lambda=m\in\N\setminus\{0\}$ 
and $u$ is a multiple of $ \Phi_m$;

\item[(ii)] $\Lambda(u)=\{x_1\leq 0,\,x_2=0\}$
(resp.~$\Lambda(u)=\{x_1\geq 0,\,x_2=0\}$), 
$\lambda=m+s$ for some $m\in\N$ and  $u$ is a multiple of $\Psi_m$
(resp.~of $\Psi_m(-x_1,x_2)$);

\item[(iii)] $\Lambda(u)=\{x_2=0\}$, $\lambda=m+2s$ for some
$m\in\N$ and  $u$ is a multiple of $\Pi_m$.
\end{itemize}
Moreover, if 
$u$ is a solution
to the lower dimensional obstacle problem, then
$m$ is even in (i) and (iii), and $m$ is odd in (ii).
\end{proposition}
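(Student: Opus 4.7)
The plan is to separate variables in polar coordinates and then analyze the resulting singular Sturm--Liouville problem via Gegenbauer / hypergeometric function theory.

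Using the $x_2$-evenness of $u$ I would work on the upper half-disc, parametrized as $(\rho\cos\theta,\rho\sin\theta)$ with $\theta\in[0,\pi]$, and write $u(\rho\cos\theta,\rho\sin\theta)=\rho^\lambda y(\theta)$. A direct substitution into $\div(|x_2|^a\nabla u)=0$ produces exactly the ODE \eqref{e:polar coord}. Since $u$ is $\lambda$-homogeneous, $\Lambda(u)$ is a closed cone in $\R\times\{0\}$, so up to reflection in $x_1$ the only possibilities are those in (i)--(iii). Each case translates into a pair of boundary conditions on $y$ at $\theta\in\{0,\pi\}$: in case (i), smoothness of $u$ at the axis away from the origin forces the natural weighted BC $\lim_{\theta\to 0^+}(\sin\theta)^{1-2s}y'(\theta)=0$, and likewise at $\pi$; in case (ii), $y(\pi)=0$ encodes $u\equiv 0$ on $\{x_1\le 0\}$ while the natural BC holds at $\theta=0$; in case (iii), both endpoints are Dirichlet. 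This is precisely the fourfold splitting \eqref{e:polar bd I}--\eqref{e:polar bd III} but without the sign constraints of the obstacle problem.

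Next I would change variable $t=\cos\theta$ to bring \eqref{e:polar coord} to the Jacobi/Gegenbauer form
$$(1-t^2)\,y''(t)-(1+a)\,t\,y'(t)+\lambda(\lambda+a)\,y(t)=0,$$
whose regular singular points at $t=\pm 1$ admit indicial exponents $0$ and $s=\frac{1-a}{2}$. The natural BC selects the analytic (exponent $0$) branch at the corresponding endpoint, whereas the Dirichlet BC selects the $(1\mp t)^s$ branch. Case (i) is then the classical Gegenbauer eigenvalue problem: nontrivial solutions exist iff $\lambda=m\in\N\setminus\{0\}$ and the eigenfunction is (up to scaling) the Gegenbauer polynomial $C_m^{a/2}(t)$; undoing the change of variable, $\rho^m C_m^{a/2}(\cos\theta)$ becomes a homogeneous polynomial in $(x_1,x_2)$ whose Frobenius recursion reproduces exactly the coefficients $\alpha_k$ of \eqref{e:polyn b}. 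For case (ii) I would substitute $y(t)=(1+t)^s\tilde y(t)$, which conjugates the ODE into a hypergeometric equation whose solutions analytic at $t=-1$ form a one-dimensional space that is nontrivial iff $\lambda-s=m\in\N$; the eigenfunction is proportional to $(1+t)^s\,{}_2F_1\!\big(-m,m+1;1-s;\frac{1-t}{2}\big)$, and using the identities $1+t=(\rho+x_1)/\rho$ and $1-t=(\rho-x_1)/\rho$ one recovers the closed form $\Psi_m$ with the Pochhammer coefficients $\beta_k$. The reflected contact set $\{x_1\ge 0\}$ is handled by the symmetric substitution $y(t)=(1-t)^s\tilde y(t)$ and produces $\Psi_m(-x_1,x_2)$. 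Case (iii) is analogous: factor $y(t)=(1-t^2)^s\tilde y(t)$, which reduces the problem to a symmetric hypergeometric ODE whose polynomial solutions exist iff $\lambda-2s=m\in\N$; a Rodrigues-type representation of these polynomials then yields $\Pi_m$ with the coefficients $\gamma_k$.

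The parity statement for obstacle problem solutions comes from imposing the pointwise constraints $y(0),y(\pi)\ge 0$ together with the one-sided Signorini flux inequalities at the endpoints lying in $\Lambda(u)$. For case (i), $\Phi_m(1,0)=\alpha_0=1>0$ while $\Phi_m(-1,0)=(-1)^m$, so positivity at $\theta=\pi$ forces $m$ even. For case (ii), the explicit form of $\Psi_m$ together with an evaluation of its weighted normal derivative $\lim_{\theta\downarrow 0}(\sin\theta)^{1-2s}y'(\theta)$ turns the Signorini sign condition at $\theta=\pi$ into a parity constraint forcing $m$ odd; case (iii) is treated analogously and yields $m$ even. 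The main obstacle I anticipate is the explicit bookkeeping that translates the abstract hypergeometric / Gegenbauer data into the closed forms \eqref{e:polyn b}--\eqref{e:PI} with the precise Pochhammer coefficients stated there, i.e.~checking that the Frobenius recursions for the three reduced ODEs match $\alpha_{k+1}/\alpha_k$, $\beta_{k+1}/\beta_k$ and $\gamma_{k+1}/\gamma_k$ exactly. A minor secondary point is the justification, in case (i), that smoothness of $u$ near an axis point is equivalent to the weighted natural BC on $y$; this uses the $x_2$-even power-series representation of local solutions to $\div(|x_2|^a\nabla u)=0$ across the axis.
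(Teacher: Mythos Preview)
Your approach is correct and follows the same separation-of-variables strategy as the paper, but the technical implementation differs. The paper makes a single substitution $y(\theta)=(\sin\theta)^s\,h(\cos\theta)$, which converts \eqref{e:polar coord} into the associated Legendre equation for $h$; the general solution is then written as $A_1 P_\nu^{s}+A_2 P_\nu^{-s}$ with $\nu=\lambda-s$, and all four boundary values (Dirichlet and weighted Neumann at $x=\pm 1$) are computed explicitly through DLMF hypergeometric identities, from which the quantization of $\lambda$ and the closed forms \eqref{e:polyn b}--\eqref{e:PI} are read off uniformly. You instead keep the equation in Gegenbauer/Jacobi form and handle each case by a factorization matched to the indicial exponents at the relevant endpoint: no factorization in (i) (the classical Gegenbauer spectrum), $(1+t)^s$ in (ii), $(1-t^2)^s$ in (iii). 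Your route is arguably more direct in cases (i) and (iii), where the polynomial structure is immediate, whereas the paper's route is more uniform but leans on several special-function evaluations. For the parity statement in the obstacle case the paper again uses the explicit Legendre-function boundary values, which makes the sign analysis in (ii) and (iii) purely mechanical; your sketch of that part (sign of the weighted flux of $\Psi_m$, resp.\ $\Pi_m$) is correct in outline but would need those evaluations written out to be complete.
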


For the proof we
need to introduce the hypergeometric function 
${}_2F_1(\alpha,\beta;\gamma;\cdot):\C\to\C$ defined by
\begin{align}\label{e:hy-geo}
{}_2F_1 (\alpha,\beta;\gamma;z) := \sum_{k=0}^{\infty}
\frac{(\alpha)_k(\beta)_k}{(\gamma)_k}\,\frac{z^k}{k!} ,
\end{align}
where $\alpha,\beta,\gamma\in \C$, $\gamma$ not a negative integer.
The power series defining ${}_2F_1$ is converging for $|z|<1$,
and it can be analytically continued elsewhere. 
In what follows we shall use several properties of ${}_2F_1$ for which we refer 
to the 
Digital Library of Mathematical Functions,
always quoting the precise formulas  employed in the derivation and referring
to their enumeration in \cite{DLMF}.

We warn the reader that, with a slight abuse of notation, in this section 
$\Gamma$ shall denote both the 
free boundary of a solution and the Euler's Gamma-function on the complex 
plane, 
extended to $Re(z)\leq 0$ 
by analytic continuation using the identity $\Gamma(1+z)=z\Gamma(z)$. In 
particular, $\Gamma$ turns 
out to be a meromorphic function with no zeros and simple poles at $z=-m$, 
$m\in\N$. Thus, 
we adopt the convention that $\Gamma^{-1}(-m)=0$ for all $m\in\N$. 

\begin{proof}[Proof of Proposition~\ref{p:classiFICAzione 2d}]
Using polar coordinates $x_1=r\cos\theta$
and $x_2=r\sin\theta$ with 
$r>0$ and
$\theta \in [0,\pi]$, let  $v(r,\theta):=u\big(r\cos\theta, r\sin\theta\big) 
= r^{\lambda}\,y(\theta)$.
The Euler-Lagrange equation \eqref{e:sistema 2d} then reads as
\begin{equation}\label{e:EL polari}
\cL_{a,\lambda} \big[y (\theta)\big]:= y''(\theta) + a\, \ctg \theta 
\, y'(\theta) + 
\lambda\,(\lambda + a)\,y(\theta) 
= 0
\qquad\theta\in(0,\pi),
\end{equation}
with boundary conditions:
\begin{itemize}
\item case (i)
\begin{gather*}
\lim_{\theta\downarrow0^+} (\sin\theta)^{1-2s}y'(\theta) = 0
\quad\text{and}\quad
\lim_{\theta\uparrow\pi^-} (\sin\theta)^{1-2s}y'(\theta) = 0,
\end{gather*}

\item case (ii) (by symmetry we assume
$\Lambda(u) = \{x_1\leq 0, x_2=0\}$)
\begin{gather*}
\lim_{\theta\downarrow0^+} (\sin\theta)^{1-2s}y'(\theta) = 0
\quad\text{and}\quad y(\pi) = 0,
\end{gather*}

\item case (iii)
\begin{gather*}
y(0) = 0 \quad\text{and}\quad y(\pi) = 0.
\end{gather*}
\end{itemize}

The change of variable
$y(\theta) = (\sin\theta)^s\,h(\cos \theta)$ transforms
the ODE for $y$ in \eqref{e:EL polari} into an associated Legendre differential 
equation 
for $h$. More precisely, we get for $\nu = \lambda-s$ and $\nu\geq 1$
\begin{equation}
(1-x^2)h''(x) -2x  h'(x) + \left(\nu^2+\nu-\frac{s^2}{1-x^2}\right)h(x) = 0
\qquad x\in(-1,1),\label{e:associated ODE}
\end{equation}
with the following boundary conditions:
\begin{itemize}
\item case (i)
\begin{gather}
\lim_{x\uparrow1} (1-x^2)^{-\sfrac{s}{2}}
\left(sxh(x)-(1-x^2)h'(x)\right) = 0,
\label{e:bd i-1}\\
\lim_{x\downarrow-1}(1-x^2)^{-\sfrac{s}{2}}\left(sxh(x)-(1-x^2)h'(x)\right) 
= 0,\label{e:bd i-2}
\end{gather}

\item case (ii)
\begin{gather}
\lim_{x\uparrow1} (1-x^2)^{-\sfrac{s}{2}}
\left(sxh(x)-(1-x^2)h'(x)\right) = 0,
\label{e:bd ii-1}\\
\lim_{x\downarrow-1} (1-x^2)^{\sfrac s2}h(x) = 0,\label{e:bd ii-2}
\end{gather}

\item case (iii)
\begin{gather}
\lim_{x\uparrow1} (1-x^2)^{\sfrac s2}h(x) = 0,
\label{e:bd iii-1}\\
\lim_{x\downarrow-1} (1-x^2)^{\sfrac s2}h(x)= 0.\label{e:bd iii-2}
\end{gather}
\end{itemize}

The associated Legendre equation can be solved explicitly in terms of the 
hypergeometric function
${}_2F_1$ (cf. \eqref{e:hy-geo}). A generic solution in the interval $(-1,1)$ 
is 
given by
\[
h(x)=A_1\, P_\nu^s(x)+A_2\,P_\nu^{-s}(x)
\]
where $A_1,\,A_2\in\R$ and
\begin{align}
P_\nu^{\pm s}(x)&:=\frac{1}{\Gamma(1\mp 
s)}\left(\frac{1+x}{1-x}\right)^{\pm\sfrac s2}
{}_2F_1\Big(\nu+1,-\nu,1\mp s,\frac{1-x}2\Big),\label{e:Ps}\\
&=\frac{1}{\Gamma(1\mp 
s)}\left(1-x^2\right)^{\mp\sfrac s2}
{}_2F_1\Big(\mp s -\nu,1\mp s +\nu,1\mp s,\frac{1-x}2\Big)
\label{e:Ps2},
\end{align}
(cf. \cite[(14.3.1), (14.3.2), Section 15.1 and (15.8.1)]{DLMF}).

\medskip

{\bf 1. Dirichlet boundary conditions.} We now proceed computing the boundary
conditions in terms of the explicit representations \eqref{e:Ps}
and \eqref{e:Ps2}.
First, note that by continuity of 
${}_2F_1(\alpha,\beta,\gamma,\cdot)$ 
and since ${}_2F_1(\alpha,\beta,\gamma,0)=1$ for all $\alpha$, $\beta$ and 
$\gamma$, we get
\begin{equation*}
\lim_{x\uparrow1} (1-x^2)^{\sfrac s2}P^s_\nu(x)=\frac{2^s}{\Gamma(1-s)}
\quad\text{and}\quad
\lim_{x\uparrow1} (1-x^2)^{\sfrac s2}P^{-s}_\nu(x)=0,
\end{equation*}
from which we get
\begin{equation}\label{e:dir 1}
\lim_{x\uparrow1} (1-x^2)^{\sfrac s2}h(x)
= A_1\,\frac{2^s}{\Gamma(1-s)}.
\end{equation}
For the corresponding limit values as $x\downarrow -1$ we use 
\cite[(15.4.20)]{DLMF} to infer
\begin{equation*}
\lim_{x\downarrow-1} (1-x^2)^{\sfrac s2}P^{-s}_\nu(x)=
\frac{2^s\,\Gamma(s)}{\Gamma(s-\nu)\,\Gamma(1+s+\nu)},
\end{equation*}
and from \eqref{e:Ps2} and 
\cite[(15.4.20)]{DLMF}
\begin{equation*}
\lim_{x\downarrow-1} (1-x^2)^{\sfrac s2}P^{s}_\nu(x)=
\frac{2^s\,\Gamma(s)}{\Gamma(-\nu)\,\Gamma(1+\nu)},
\end{equation*}
from which
\begin{equation}\label{e:dir -1}
\lim_{x\downarrow-1} (1-x^2)^{\sfrac s2}h(x)
= A_1\,\frac{2^s\,\Gamma(s)}{\Gamma(-\nu)\,\Gamma(1+\nu)}
+A_2\,\frac{2^s\,\Gamma(s)}{\Gamma(s-\nu)\,\Gamma(1+s+\nu)}.
\end{equation}

\medskip

{\bf 2. Neumann boundary conditions.}
For what concerns the boundary conditions involving the derivative
of $h$ 
we use \cite[(15.5.1)]{DLMF} to compute
\begin{align*}
\frac d{dx}P^{\pm s}_\nu(x)=\frac{1}{\Gamma(1\mp s)}\Big[&
\mp s\frac{(1\pm x)^{\sfrac s2-1}}{(1\mp x)^{\sfrac 
s2+1}}{}_2F_1\big(\nu+1,-\nu,1\mp s,\frac{1-x}2\big)
\notag
\\&+\frac{\nu(\nu+1)}{2(1\mp s)}\left(\frac{1+x}{1-x}\right)^{\pm \sfrac s2}
{}_2F_1\big(\nu+2,1-\nu,2\mp s,\frac{1-x}2\big)\Big].
\end{align*}
Hence, we get 
\begin{align}\label{e:ddxPsnu}
 &(1-x^2)^{-\sfrac s2}\left(sxP^{\pm s}_\nu(x)-(1-x^2)\frac{d}{dx}P^{\pm 
s}_\nu(x)\right)=
 \mp\frac{(1\mp x)^{1-s}}{\Gamma(1\mp s)}\cdot\notag\\
&\cdot\Big[s\cdot{}_2F_1\big(\nu+1,-\nu,1\mp s,\frac{1-x}2\big)
\pm\frac{\nu(\nu+1)}{2(1+s)}(1\pm x)\cdot{}_2F_1\big(\nu+2,1-\nu,2\mp 
s,\frac{1-x}2\big)\Big].
\end{align}
From the latter formula we immediately conclude that 
\begin{equation*}
\lim_{x\uparrow1}  (1-x^2)^{-\sfrac s2}
\left(sxP^s_\nu(x)-(1-x^2)\frac{d}{dx}P^s_\nu(x)\right)=0,
\end{equation*}
and
\begin{equation*}
\lim_{x\uparrow1}  (1-x^2)^{-\sfrac s2}
\left(sxP^{-s}_\nu(x)-(1-x^2)\frac{d}{dx}P^{-s}_\nu(x)\right)
=\frac{s\,2^{1-s}}{\Gamma(1+s)}.
\end{equation*}
Therefore, we have
\begin{equation}\label{e:neu 1}
\lim_{x\uparrow1} (1-x^2)^{-\sfrac{s}{2}}
\left(sxh(x)-(1-x^2)h'(x)\right) 
= A_2\,\frac{s\,2^{1-s}}{\Gamma(1+s)}.
\end{equation}
In addition, from \eqref{e:ddxPsnu}, from the linear transformation of variable 
rule for 
${}_2F_1$ in \cite[(15.8.4)]{DLMF} and from \cite[Section~15.5]{DLMF}, 
elementary calculations 
lead to
\begin{align*}
 &(1-x^2)^{-\sfrac s2}\left(sxP^s_\nu(x)-(1-x^2)\frac{d}{dx}P^s_\nu(x)\right)=
\frac{\pi(1-x)^{1-s}}{\sin(s\pi)\Gamma(-s-\nu)\,\Gamma(1-s+\nu)\,\Gamma(1+s)}
\cdot\notag\\
&\cdot\Big(s\cdot{}_2F_1\Big(\nu+1,-\nu,1+s,\frac{1+x}2\Big)
-\frac{\nu(\nu+1)}{2}(1+x)\cdot{}_2F_1\Big(\nu+2,1-\nu,1+s,\frac{1+x}
2\Big)\Big)\notag\\
&+\frac{2^{s-1}\pi(s+\nu)(1-s+\nu)}{\sin(s\pi)\Gamma(\nu+1)\,\Gamma(-\nu)\,
\Gamma(2-s)}
(1-x^2)^{1-s}\cdot{}_2F_1\Big(1-s-\nu,2-s+\nu,2-s,\frac{1+x}2\Big).
\end{align*}
In turn, this implies
\begin{equation*}
\lim_{x\downarrow-1}  (1-x^2)^{-\sfrac s2}
\left(sxP^s_\nu(x)-(1-x^2)\frac{d}{dx}P^s_\nu(x)\right)=
\frac{2^{1-s}\pi}{\sin(s\pi)\Gamma(-s-\nu)\,\Gamma(1-s+\nu)\,\Gamma(s)}.
\end{equation*}
Finally, by \cite[(15.8.1)]{DLMF} we rewrite \eqref{e:ddxPsnu} for $P^{-s}_\nu$ 
as
\begin{align*}
(1-x^2)^{-\sfrac 
s2}\Big(sxP^{-s}_\nu(x)-&(1-x^2)\frac{d}{dx}P^{-s}_\nu(x)\Big)\notag\\
&=\frac{1}{\Gamma(1+s)}\cdot\Big[s(1+x)^{1-s}{}_2F_1\big(\nu+1,-\nu,
1+s,\frac{1-x}2\big)
\\&\quad\quad-\frac{\nu(\nu+1)}{2^s(1+s)}(1-x)
\cdot{}_2F_1\big(s-\nu,1+s+\nu,2+s,\frac{1-x}2\big)\Big],
\end{align*}
and infer from \cite[(15.4.20)]{DLMF} 
\begin{equation*}
\lim_{x\downarrow-1}  (1-x^2)^{-\sfrac s2}
\left(sxP^{-s}_\nu(x)-(1-x^2)\frac{d}{dx}P^{-s}_\nu(x)\right)=
-\frac{\nu(\nu+1)}{2^{s-1}}\frac{\Gamma(1+s)\Gamma(1-s)}{
\Gamma(2+\nu)\Gamma(1-\nu)},
\end{equation*}
{\ie}
\begin{align}\label{e:neu -1}
\lim_{x\downarrow-1} (1-x^2)^{-\sfrac{s}{2}}
\left(sxh(x)-(1-x^2)h'(x)\right) 
& = A_1\,
\frac{2^{1-s}\pi}{\sin(s\pi)\Gamma(-s-\nu)
\,\Gamma(1-s+\nu)\,\Gamma(s)}\notag\\
&\quad-A_2\,\frac{\nu(\nu+1)}{2^{s-1}}
\frac{\Gamma(1+s)\Gamma(1-s)}{
\Gamma(2+\nu)\Gamma(1-\nu)}
\end{align}

\medskip

{\bf 3.} By means of \eqref{e:dir 1}, \eqref{e:dir -1},
\eqref{e:neu 1} and \eqref{e:neu -1}
we are able
to complete the classification by 
discussing all the cases (i) -- (iii).
We start off with case (i):
using \eqref{e:neu 1} and \eqref{e:neu -1}
we deduce that $A_2 =0$ and $\nu + s = m \in\N$
(in order to have $\sfrac{1}{\Gamma(-s-\nu)} =0$).
Therefore,
\begin{align*}
h(x)&= A_1 
P_\nu^s(x)
\stackrel{\eqref{e:Ps2}}{=}\frac{2^s \, A_1}{\Gamma(1-s)}(1-x^2)^{-\sfrac 
s2}\,{}_2F_1\Big(1+m -2s,-m,1-s,\frac{1-x}2\Big).
\end{align*}
In particular, $(1-x^2)^{\sfrac s2}h(x)$ is a polynomial of degree $d\leq m$
(or a constant if $m=0$), as $(-m)_k=0$ for every $k\geq m+1$.
The case $m=0$ implies $y$ to be constant and thus $u\equiv 0$,
which is excluded from the condition $\Lambda(u) = \{x_1=x_2=0\}$.
Hence, $m>0$ and $y$ is a polynomial of degree $d$
in $\cos\theta$.
As for every $k\geq 0$
\[
\cL_{\lambda,a}\big[\,(\cos \theta)^k\big]
= \big(\lambda(\lambda+a)-k(k+a)\big)\,(\cos \theta)^k + 
k(k-1)\,(\cos 
\theta)^{k-2},
\]
we infer that $d= \lambda= \nu+s = m$
and that $y$ depends only on powers of $\cos\theta$
with the same parity as $m$:
\[
y(\cos\theta) = a_m\,(\cos\theta)^m +
a_{m-2}\,(\cos\theta)^{m-2} +\cdots +
a_{m-\lfloor \sfrac{m}{2}\rfloor}\,(\cos\theta)^{m-\lfloor \sfrac{m}{2}\rfloor},
\quad a_m\neq 0.
\]
Therefore, $u$ is an $m$-homogeneous polynomial of the form in \eqref{e:polyn b}
and by a direct computation
\begin{align*}
\div &\big(x_2^a\nabla u(x)\big) =\\
&\sum_{k=0}^{m-1} \Big((m-2k)(m-2k-1)\, \alpha_k+ 
2(k+1)(2k+1+a)\,\alpha_{k+1}\Big)\,x_1^{m-2k-2}x_2^{2k+a}
=0
\end{align*}
we conclude the explicit form of the coefficients $\alpha_k$.

\medskip 

Next we discuss case (ii): 
from \eqref{e:neu 1} we get $A_2=0$
and from \eqref{e:dir -1} we get $\nu\in\N$.
Thus ${}_2F_1(\nu+1,-\nu,1-s,\cdot)$ is a polynomial of degree at most 
$\nu=m$ with $m\in\N\setminus\{0\}$.
The corresponding representation formula in \eqref{e:hy-geo1b}
follows at once from
\begin{align*}
u\big(r\cos\theta, r\sin\theta\big) &= 
A_1\, r^{m+s}\,(\sin\theta)^s\,P^s_\nu(\cos\theta)\\&
\stackrel{\eqref{e:Ps}}{=}A_1\,r^{m+s}\,
\sum_{k=0}^{m}\frac{(m+1)_k(-m)_k}{2^k k!\,(1-s)_k}\big(1-\cos\theta\big)^k
\big(1+\cos\theta\big)^s\,.
\end{align*}

\medskip

We discuss case (iii): from \eqref{e:dir 1} and \eqref{e:dir -1}
we get that $A_1=0$ and $\nu-s = m \in\N$
and the representation formula for solutions in \eqref{e:PI} follows
by direct verification (alternatively one can also derive it 
from the explicit formula in terms of the hypergeometric function).

\medskip

{\bf 4.} Finally, we discuss the case of solutions $u$
to the lower dimensional obstacle problem \eqref{e:ob-pb local}. 
In particular, $u$ solves \eqref{e:sistema 2d}, $u\vert_{B_1'}\geq 0$
and the normal weighted derivative satisfies a sign condition.
Thus, the following additional boundary conditions need to be satisfied
by $y$:
\begin{gather*}
\lim_{\theta\downarrow0^+} (\sin\theta)^{1-2s}y'(\theta) \leq 0
\quad\text{and}\quad
\lim_{\theta\uparrow\pi^-} (\sin\theta)^{1-2s}y'(\theta) \geq 0,\\
y(0) \geq 0 \quad\text{and}\quad y(\pi) \geq 0.
\end{gather*}
In turn, these for the function $h$ translate into 
\begin{gather}
\lim_{x\uparrow1} (1-x^2)^{-\sfrac{s}{2}}
\left(sxh(x)-(1-x^2)h'(x)\right) \leq 0,
\label{e:extra bd 1}\\
\lim_{x\downarrow-1}(1-x^2)^{-\sfrac{s}{2}}\left(sxh(x)-(1-x^2)h'(x)\right) 
\geq 0,\label{e:extra bd 2}\\
\lim_{x\uparrow1} (1-x^2)^{\sfrac s2}h(x) \leq 0,
\label{e:extra bd 3}\\
\lim_{x\downarrow-1} (1-x^2)^{\sfrac s2}h(x)\geq 0.\label{e:extra bd 4}
\end{gather}
We can then discuss the
implications of \eqref{e:extra bd 1} -- \eqref{e:extra bd 4}
for the tree cases (i) -- (iii).
In case (i), by \eqref{e:dir 1} and \eqref{e:extra bd 3} we get
$A_1\geq 0$; similarly, by \eqref{e:dir -1} and \eqref{e:extra bd 4} we get
that $\Gamma(-\nu) >0$, {\ie}~$2m-1\leq \nu \leq 2m$
for some $m\in \N\setminus\{0\}$. Since $\nu +s \in\N$, we conclude that
$\nu + s=2m$.

In case (ii), using \eqref{e:dir 1} and \eqref{e:extra bd 3}
we conclude that $A_1\geq 0$; moreover, from
\eqref{e:neu -1} and \eqref{e:extra bd 2} we infer that
$\Gamma(-s-\nu)\geq 0$ and therefore
$2m+1\leq \nu+s\leq 2m+2$ for some $m\in\N$.
In particular, since $\nu\in\N$, we have that $\nu =2m+1$
is odd.

Finally, in case (iii),
using \eqref{e:neu 1} in \eqref{e:extra bd 1}
and \eqref{e:neu -1} in \eqref{e:extra bd 2}
we deduce that $A_2\geq 0$ and $\Gamma(1-\nu) \leq 0$,
from which it follows that $\nu-s =2m$ is even.
\end{proof}

Using Proposition~\ref{p:classiFICAzione 2d} we now
complete the proof of the classification of global solutions
$u\in \cH^{top}$ with $(n-1)$-dimensional spine in
Lemma~\ref{l:classification}.

\begin{proof}[Proof of Lemma~\ref{l:classification}]
For every $u\in \cH^{top}$, we have that $u$ depends on $x_{n+1}$
and only one in-plane variable, {\ie}~$u(x) = v (x\cdot e, x_{n+1})$ 
for some $v:\R^2\to\R$
and for some unit vector $e\in\R^{n+1}$ with $e\cdot e_{n+1} =0$.
In particular, $v$ is a two-dimensional solution to the lower
dimensional obstacle problem in $\R^2$.
Therefore, by Proposition~\ref{p:classiFICAzione 2d}
we know that $\lambda\in\{2m,2m-1+s,2m+2s\}_{m\in\N\setminus\{0\}}$
and $v$ is one of the functions in \eqref{e:polyn b} -- \eqref{e:PI}.
The statements about
$\Gamma(u)$, $\Lambda(u)$, $\mathcal{N}(u)$ and $S(u)$
follow from the explicit formulas therein.
\end{proof}

\subsection{Further classification results}
Here we provide
a proof to Proposition~\ref{p:classiFICAzione 2d improved-0}.
We split the argument in two parts.  
We start off classifying 
in any dimension 
all $\lambda$-homogeneous solutions (even symmetric
with respect to $x_{n+1}$) of 
\begin{equation}\label{e:PDEz}
 \begin{cases}
 -\div(|x_{n+1}|^a\nabla u)=0 & B_1\setminus\Lambda(u)\cr
 u=0 & \Lambda(u)
 \end{cases}
\end{equation}
such that $\lambda\in [1+s,2+s)$
and having as contact set $\Lambda(u)$ one of the following
\begin{itemize}
\item[(i)] $\Lambda(u)=\{x_n=x_{n+1}=0\}$,
\item[(ii)] $\Lambda(u)=\{x_n\leq 0,\, x_{n+1}=0\}$,
\item[(iii)] $\Lambda(u)=\{x_{n+1}=0\}$.
\end{itemize}
We follow the arguments
in \cite[Lemma~5.3]{CaSaSi08} and 
\cite[Lemma~A.3]{GaPePoSm},
in which the case
$\lambda=1+s$ with $\Lambda(u)=\{x_n\leq 0,\, x_{n+1}=0\}$
is addressed. To this aim we introduce the following notation: 
$x=(x'',x_n,x_{n+1})\in \R^{n-1}\times\R\times\R$.
 
\begin{lemma}\label{l:2hom polyn}
Let $u:\R^{n+1}\to\R$ be a $\lambda$-homogeneous
solution of \eqref{e:PDEz}, even symmetric w.r.to $x_{n+1}$,
with $\lambda\in[1+s,2+s)$
and $\Lambda(u)$ one of the sets in (i) -- (iii) above.
Then, the following occurs:
\begin{itemize}
\item in case (i), $\lambda =2$ and there exists
a $1$-homogeneous polynomial 
$q:\R^{n-1}\to\R$ and a constant $c\in\R$ such that 
\begin{equation}\label{e:i 2}
u(x)=q(x'')\,x_n+c\,\Phi_2(x_n,x_{n+1});
\end{equation}

\item in case (ii), $\lambda =1+s$ and there exists a
$1$-homogeneous polynomial 
$q:\R^{n-1}\to\R$ and a constant $c\in\R$ such that 
\begin{equation}\label{e:ii  1+s}
u(x)=q(x'')\,\left(x_n+\sqrt{x_n^2+x_{n+1}^2}\right)^s
+c\,\Psi_1(x_n,x_{n+1});
\end{equation}

\item in case (iii), $\lambda =1+2s$ and there exists
a $1$-homogeneous 
polynomial $q:\R^{n}\to\R$ such that 
\begin{equation}\label{e:iii  1+2s}
u(x)=|x_{n+1}|^{2s}\,q(x').
\end{equation}
\end{itemize}
\end{lemma}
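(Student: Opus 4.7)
The strategy, following \cite[Lemma~5.3]{CaSaSi08} and \cite[Lemma~A.3]{GaPePoSm}, is to reduce $u$ to a two-dimensional problem covered by Proposition~\ref{p:classiFICAzione 2d} via tangential differentiation along the spine of $\Lambda(u)$. In cases (i) and (ii) the spine contains $\R^{n-1}\times\{0\}^2$, so each coordinate direction $e_i$ with $i\leq n-1$ is tangential; in case (iii) the spine is the full hyperplane $\R^n\times\{0\}$, so the tangential directions are $e_1,\dots,e_n$. Differentiating \eqref{e:PDEz} and the Dirichlet condition $u|_{\Lambda(u)}=0$ in any tangential direction $e_i$ yields that $\partial_{x_i}u$ is a $(\lambda-1)$-homogeneous even (in $x_{n+1}$) weak solution of \eqref{e:PDEz} with contact set still containing $\Lambda(u)$. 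Iterating, $\partial^2_{x_ix_j}u$ is $(\lambda-2)$-homogeneous, and $\lambda-2<s$ since $\lambda<2+s$.

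The key rigidity step is a Liouville-type claim: any non-trivial $\mu$-homogeneous even (in $x_{n+1}$) weak solution of \eqref{e:PDEz} with contact set one of (i)--(iii) has $\mu$ at least the minimal admissible 2D frequency from Proposition~\ref{p:classiFICAzione 2d}, namely $1$, $s$ and $2s$ respectively. Granting this, $\partial^2_{x_ix_j}u\equiv 0$ for all tangential pairs, so $u$ is affine in the spine variables:
\begin{equation*}
u(x)=u_0(x_n,x_{n+1})+\sum_{i=1}^{n-1}x_i\,u_i(x_n,x_{n+1})\quad\text{in cases (i), (ii),}
\end{equation*}
and $u(x)=g(x_{n+1})+\sum_{i=1}^n x_i\,g_i(x_{n+1})$ in case (iii). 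Plugging back into \eqref{e:PDEz}, each coefficient solves the reduced problem on its slice with the same contact set: a 2D instance of \eqref{e:sistema 2d} in cases (i)--(ii), and the ODE $(|t|^a h'(t))'=0$ in case (iii).

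For cases (i)--(ii), applying Proposition~\ref{p:classiFICAzione 2d} to $u_0$ and the $u_i$ together with $\lambda\in[1+s,2+s)$ singles out $\lambda=2$ with $u_0=c\,\Phi_2$ and $u_i=c_i\,\Phi_1=c_i\,x_n$ in case (i), and $\lambda=1+s$ with $u_0=c\,\Psi_1$ and $u_i=c_i\,\Psi_0=c_i(x_n+\sqrt{x_n^2+x_{n+1}^2})^s$ in case (ii); setting $q(x'')=\sum_i c_i\,x_i$ gives \eqref{e:i 2} and \eqref{e:ii  1+s}. In case (iii), integrating $(|t|^a h'(t))'=0$ with $h(0)=0$ and even symmetry yields $h(t)=c\,|t|^{2s}$; the constant term $g$ vanishes by $\lambda$-homogeneity (as $\lambda>2s$), while $g_i(x_{n+1})=c_i\,|x_{n+1}|^{2s}$ forces $\lambda=1+2s$ and produces \eqref{e:iii  1+2s} with $q(x')=\sum_i c_i\,x_i$.

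\textbf{Main obstacle.} The technical heart is the Liouville-type rigidity invoked above. Since \eqref{e:PDEz} has no sign constraint, Corollary~\ref{c:minima freq} does not directly apply, so an independent lower bound on the homogeneity of non-trivial solutions is needed. A natural route is a spherical-harmonic decomposition on the spine sphere $\s^{n-2}$ (or $\s^{n-1}$ in case (iii)), which uncouples the problem into 2D Fourier modes all covered by Proposition~\ref{p:classiFICAzione 2d}, whose minimal frequencies are exactly $1$, $s$, $2s$. Equivalently, one can apply Almgren's frequency function to homogeneous solutions of \eqref{e:PDEz} (constant and equal to the homogeneity degree) together with a spectral analysis of the Laplace--Beltrami operator on $\s^n$ weighted by $|x_{n+1}|^a$. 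Handling the weighted Sobolev regularity of the iterated tangential derivatives across the codimension-$2$ (resp.~codimension-$1$) contact set, and the ensuing removable-singularity argument at the spine, is the delicate technical point.
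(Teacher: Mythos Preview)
Your overall strategy matches the paper's: reduce to the two-dimensional classification of Proposition~\ref{p:classiFICAzione 2d} via tangential differentiation along the spine. However, the ``main obstacle'' you flag --- the Liouville-type lower bound on the homogeneity of non-trivial $(n{+}1)$-dimensional solutions of \eqref{e:PDEz} --- is precisely what the paper sidesteps by a simple trick you are missing.

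Instead of stopping at second tangential derivatives and invoking a Liouville claim in full dimension, the paper differentiates \emph{once more}. In cases (ii) and (iii), the third tangential derivatives $\partial_{ijk}u$ (with $i,j,k$ along the spine) are $(\lambda-3)$-homogeneous with $\lambda-3<s-1<0$, and they belong to $L^\infty_{\loc}$ by the regularity estimates of \cite{FaKeSe82} (cf.~also \cite[Lemma~A.2]{GaPePoSm}). A locally bounded, negatively homogeneous function must vanish identically --- no spectral or frequency argument is needed. Once $\partial_{ijk}u\equiv 0$, the second derivatives $\partial_{ij}u$ are constant along the spine and hence depend only on the transverse variables $(x_n,x_{n+1})$ in case (ii), or on $x_{n+1}$ alone in case (iii). \emph{Now} Proposition~\ref{p:classiFICAzione 2d} (respectively the one-variable ODE) applies to these genuinely low-dimensional functions and forces $\partial_{ij}u\equiv 0$, since $\lambda-2<s$ lies below the minimal admissible two-dimensional exponent. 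From here your affine-in-spine expansion and the remainder of the argument proceed exactly as you describe.

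In case (i) the paper takes yet another shortcut: since $\Lambda(u)=\{x_n=x_{n+1}=0\}$ has $\cH^n$-measure zero, \cite[Lemma~5.3]{CaSaSi08} gives directly that $u$ is a polynomial. The only integer in $[1+s,2+s)$ is $2$, whence $\lambda=2$, and the decomposition \eqref{e:i 2} follows by inspecting the polynomial structure together with the constraint $u(x'',0,0)=0$.

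Your spherical-harmonic or frequency route would also work, but is considerably heavier; the paper trades that analysis for one extra tangential derivative and an elementary $L^\infty$-versus-negative-homogeneity argument.
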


\begin{proof}
{\bf 1.}
In case (i), since $\cH^n(\Lambda(u))=0$, it follows
from \cite[Lemma~5.3]{CaSaSi08} that
$u$ is a polynomial.
Therefore, $\lambda=2$
and by symmetry $u(x)=q(x^\prime,x_n)+\alpha\,x_{n+1}^2$, 
with 
$q:\R^n\to\R$ a 
$2$-homogeneous polynomial and 
 $\alpha\in\R$. Furthermore, by taking into account that 
$\Lambda(u)=\{x_n=x_{n+1}=0\}$ we infer that 
 $q(x^\prime,x_n)=q_1(x^\prime)x_n+\beta\,x_n^2$, with $q_1:\R^{n-1}\to\R$ a 
$1$-homogeneous polynomial and  $\beta\in\R$.
Thus, 
$u(x)=q_1(x^\prime)x_n+\beta\,x_n^2+\alpha\,x_{n+1}^2$, and 
imposing that $u$ solves the 
equation we conclude that
\[
\div(|x_{n+1}|^a\nabla u) = 
\div\big(|x_{n+1}|^a\nabla (\beta\,x_n^2+\alpha\,x_{n+1}^2)\big) = 0.
\]
In particular, from the classification in Proposition~\ref{p:classiFICAzione 
2d},
we must have $\beta\,x_n^2+\alpha\,x_{n+1}^2 = c\,\Phi_2$,
thus implying \eqref{e:i 2}.

\medskip

{\bf 2.} In case (ii), we consider the tangential derivatives
up to the third order $\de_{i}u$, $\de_{ij}u$ and $\de_{ijk}u$
in directions $i,j,k\in\{1,\ldots,n-1\}$.
By the regularity estimate in \cite{FaKeSe82}
(cf.~also \cite[Lemma~A.2]{GaPePoSm}) we deduce that
$\de_{i}u$, $\de_{ij}u$ and $\de_{ijk}u \in 
H^1_\loc(\R^{n+1},|x_{n+1}|^a\,\cL^{n+1}) \cap 
L^\infty_\loc(\R^{n+1})$.
In particular, since $\de_{ijk}u$ is $\lambda -3 <0$
homogeneous, it follows that $\de_{ijk}u\equiv 0$
for all $i,j,k\in\{1,\ldots,n-1\}$.
We then infer that
\[
\de_{ij}u(x'',x_n,x_{n+1}) = \de_{ij}u(0,x_n,x_{n+1}).
\]
Being $\de_{ij}u(0,x_n,x_{n+1})$ solution to 
\eqref{e:PDEz}, the analysis in 
Proposition~\ref{p:classiFICAzione 2d}
implies that its homogeneity $\lambda-2$ is at least $s$, 
a condition excluded by the restriction $\lambda < 2+s$.
We then conclude that $\de_{ij}u\equiv 0$
for all $i,j,\in\{1,\ldots,n-1\}$, thus we get
\[
\de_{i}u(x'',x_n,x_{n+1}) = \de_{i}u(0,x_n,x_{n+1}),
\]
and
\begin{equation}\label{e:taylor}
u(x'',x_n,x_{n+1}) = 
u(0,x_n,x_{n+1}) + \sum_{i=1}^{n-1}\de_iu(0,x_n,x_{n+1})\,x_i.
\end{equation}

In particular, we infer from Proposition~\ref{p:classiFICAzione 2d} 
that the only allowed homogeneity is $\lambda=1+s$,
$u(0,x_n,x_{n+1}) = c_0 \Psi_1(x_n,x_{n+1})$
and $\de_{i}u(x'',x_n,x_{n+1}) = 
c_i\Psi_0(x_n,x_{n+1})$, for some constants $c_i\in \R$
(note that all these functions solve \eqref{e:PDEz}
with contact set $\Lambda=\{x_n\leq0, x_{n+1}=0\}$).
Using the explicit formulas in 
Proposition~\ref{p:classiFICAzione 2d},
we conclude \eqref{e:ii 1+s}.

\medskip

{\bf 3.} For case (iii), we can argue analogously
as above. In particular, from the 
$(\lambda -3)$-homogeneity of
$\de_{ijk}u$ and $\lambda -3 <0$,
it follows that $\de_{ijk}u\equiv 0$
for all $i,j,k\in\{1,\ldots,n\}$. 
Therefore, $\de_{ij}u$ are functions which are
$(\lambda-2)$-homogeneous and depend only on $x_{n+1}$.
By a direct computation we get from \eqref{e:PDEz}
that $\de_{ij} u = c\,x_{n+1}^{2s}$, {\ie}~$\lambda =2+2s$:
since $\lambda<2+s$, we infer that $c=0$ and 
$\de_{ij} u = 0$ for all $i,j=\{1,\ldots,n\}$,
in turn implying
\begin{equation}\label{e:taylor2}
u(x',x_{n+1}) = 
u(0,x_{n+1}) + \sum_{i=1}^{n}\de_i u(0,x_{n+1})\,x_i.
\end{equation}
By taking into account the homogeneity of $u$ 
and $\de_iu$ and \eqref{e:PDEz} (which implies, in particular,
that $u(0,x_{n+1}) = 0$), one then
obtains \eqref{e:iii  1+2s}.
\end{proof}

We are now ready to prove the general case
of Proposition~\ref{p:classiFICAzione 2d improved-0}.
Actually, we show a slightly more general result.

\begin{proposition}\label{p:classiFICAzione 2d improved}
Let $u:\R^{n+1}\to\R$ be a non-trivial $\lambda$-homogeneous
function even w.r.to $x_{n+1}$.
Assume that $u$ is a weak solution of \eqref{e:PDEz}.
\begin{itemize}
\item[(i)] If $\lambda=m\in\N\setminus\{0,1\}$
and $\{x\cdot e=x_{n+1}=0\}\subseteq \Lambda(u)$
for some unit vector $e\in\R^n\times\{0\}$, then
\begin{equation}\label{e:polyn no spine}
u(x)= \sum_{k=0}^{m}p_k(x^{\prime \prime})\,\Phi_{m-k}(x\cdot 
e,x_{n+1}), 
\end{equation}
with $p_k:\R^{n-1}\to\R$ harmonic $k$-homogeneous polynomial.  

\item[(ii)] If $\lambda=m+s$, with $m\in\N\setminus\{0\}$,
and $\{x\cdot e=x_{n+1}=0\}\subseteq \Lambda(u)$
for some unit vector $e\in\R^n\times\{0\}$, then
\begin{equation}\label{e:hy-geo no spine-1}
u(x)=\sum_{k=0}^{m}p^+_k(x^{\prime \prime})\,\Psi_{m-k}(x\cdot 
e,x_{n+1})+
\sum_{k=0}^{m}p^{-}_k(x^{\prime \prime})\,\Psi_{m-k}(-x\cdot 
e,x_{n+1}), 
\end{equation}
with $p_k:\R^{n-1}\to\R$ harmonic $k$-homogeneous 
polynomial. 

\item[(iii)] If $\lambda=m+2s$, with $m\in\N$, and 
$\Lambda(u)=\{x_{n+1}=0\}$,
then
\begin{equation}\label{e:polinomio banana}
u(x)=|x_{n+1}|^{2s}\sum_{k=0}^{\lfloor \sfrac{m}2\rfloor}
\gamma_k\, x_{n+1}^{2k} \Delta^k p(x')
\end{equation}
with $p:\R^{n}\to\R$ any $m$-homogeneous polynomial
and $\gamma_k := \frac{(-1)^k}{4^k k!\,(1+s)_k}$.
\end{itemize}
Moreover, if $u$ is a solution to the thin obstacle problem
\eqref{e:ob-pb local}, then
in case (i), respectively (ii), $u$ turns out to be
a positive multiple of $h_{2m}(x\cdot e, x_{n+1})$, 
respectively $h_{2m-1+s}(\pm x\cdot e, x_{n+1})$.
\end{proposition}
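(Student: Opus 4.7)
My plan is to prove Proposition~\ref{p:classiFICAzione 2d improved} by induction on $m$, using Lemma~\ref{l:2hom polyn} for the base cases $\lambda \in [1+s, 2+s)$. Without loss of generality, I would fix $e = e_n$ and write $x = (x'', x_n, x_{n+1})$ with $x'' \in \R^{n-1}$. I will present the argument in detail for case (i); cases (ii) and (iii) follow by the same scheme, with $\Phi_{m-k}$ replaced by the pair $\Psi_{m-k}(\pm x_n, x_{n+1})$ or by the Laplacian-iteration building blocks of \eqref{e:polinomio banana}, and in case (iii) the enhanced $O(n)$-symmetry allows tangential differentiation along all $n$ directions instead of $n-1$.

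For the inductive step in case (i), I would consider the tangential derivatives $v_j := \partial_{x_j} u$ for $j \in \{1, \ldots, n-1\}$. Each $v_j$ is $(m-1)$-homogeneous, even in $x_{n+1}$, solves $\div(|x_{n+1}|^a \nabla v_j) = 0$ in $B_1 \setminus \Lambda(u)$ (the equation commutes with tangential differentiation), and vanishes on the $x''$-translation-invariant subspace $\{x_n = x_{n+1} = 0\}$. The inductive hypothesis thus provides
\[
v_j = \sum_{k=0}^{m-1} p^{(j)}_k(x'') \, \Phi_{m-1-k}(x_n, x_{n+1}),
\]
with each $p^{(j)}_k$ harmonic and $k$-homogeneous. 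The compatibility $\partial_{x_l} v_j = \partial_{x_j} v_l$ translates into $\partial_{x_l} p^{(j)}_k = \partial_{x_j} p^{(l)}_k$, so the closed $1$-form $\omega_k := \sum_j p^{(j)}_k \, dx_j$ on the simply connected $\R^{n-1}$ admits a $(k+1)$-homogeneous polynomial primitive $P_k$.

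Setting $\tilde u := \sum_{k=0}^{m-1} P_k(x'') \Phi_{m-1-k}(x_n, x_{n+1})$, the function $g := u - \tilde u$ has zero tangential gradient in $x''$ and hence depends only on $(x_n, x_{n+1})$; moreover, $g$ is $m$-homogeneous, solves the 2D weighted equation, and satisfies $g(0,0) = 0$. Proposition~\ref{p:classiFICAzione 2d} (case (i)) then yields $g = c \Phi_m$. Re-indexing gives $u = \sum_{k=0}^{m} q_k(x'') \Phi_{m-k}(x_n, x_{n+1})$ with $q_k$ a $k$-homogeneous polynomial; evaluating at $(x'', 0, 0)$ and using the vanishing of $u$ on the codim-$2$ subspace forces $q_m \equiv 0$. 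Harmonicity of the remaining $q_k$ is then extracted from the PDE: since each $\Phi_{m-k}$ individually satisfies the 2D equation,
\[
0 = \div(|x_{n+1}|^a \nabla u) = |x_{n+1}|^a \sum_{k=0}^{m-1} \Delta_{x''} q_k(x'') \, \Phi_{m-k}(x_n, x_{n+1})
\]
on any open subset of $B_1 \setminus \Lambda(u)$, and the distinct homogeneities of the $\Phi_{m-k}$ give their linear independence in $(x_n, x_{n+1})$, so $\Delta_{x''} q_k \equiv 0$ for each $k$, yielding \eqref{e:polyn no spine} with $p_k := q_k$.

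For the obstacle-problem statement, restricting \eqref{e:polyn no spine} to $\{x_{n+1} = 0\}$ yields $u(x'', x_n, 0) = \sum_k p_k(x'') x_n^{m-k}$; the sign constraint $u \geq 0$ tested against both signs of $x_n$ forces $p_k \equiv 0$ when $m-k$ is odd and $p_k(x'') \geq 0$ on $\R^{n-1}$ when $m-k$ is even. Since a non-trivial harmonic homogeneous polynomial of positive degree cannot be pointwise non-negative (its spherical average vanishes), $p_k \equiv 0$ for every $k \geq 1$; consequently $m$ is even, say $m = 2m'$, and $u$ is a positive multiple of $h_{2m'}(x \cdot e, x_{n+1})$. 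Case (ii) proceeds analogously using the explicit values of $\Psi_m$ on $\{x_{n+1}=0\}$. The main technical obstacle I anticipate is the integration step when transferring the argument to case (ii): the 2D residual $g$ may a priori realise either of the two case-(ii) solutions of Proposition~\ref{p:classiFICAzione 2d}, producing both $\Psi_{m-k}(x_n, x_{n+1})$ and $\Psi_{m-k}(-x_n, x_{n+1})$ families in \eqref{e:hy-geo no spine-1} and requiring care to match contributions across the two half-lines of the codim-$2$ subspace; in case (iii) the enhanced $O(n)$-symmetry instead allows integration against a single $m$-homogeneous polynomial $p(x')$ in all $n$ tangential variables, and the PDE then dictates the Laplacian-iteration structure $\Delta^k p(x')$ appearing in \eqref{e:polinomio banana}.
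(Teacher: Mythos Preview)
Your argument is essentially the paper's: induction on $m$ from Lemma~\ref{l:2hom polyn}, tangential differentiation in the $x''$-directions (all of $x'$ in case (iii)), application of the inductive hypothesis, integration back to $u$, identification of the two-variable residual via Proposition~\ref{p:classiFICAzione 2d}, and harmonicity of the coefficient polynomials read off from the equation using the linear independence of the $\Phi_{m-k}$. The paper integrates directly along the segment $t\mapsto(tx'',x_n,x_{n+1})$ rather than invoking the Poincar\'e-lemma argument you sketch, but this is cosmetic.

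One step in your obstacle-problem discussion is not justified as written: the assertion that ``$u\geq 0$ tested against both signs of $x_n$ forces $p_k\equiv 0$ when $m-k$ is odd and $p_k\geq 0$ when $m-k$ is even'' does not follow --- a one-variable polynomial that is nonnegative on all of $\R$ may well have coefficients of either sign. The extraction that makes your conclusion work (and that the paper intends) is at leading order: assuming some $p_k$ with $k\geq 1$ is nonzero, let $J$ be the \emph{largest} such index; then $u(x'',x_n,0)/x_n^{m-J}\to p_J(x'')$ as $x_n\to 0^{\pm}$, which forces a sign on $p_J$, after which your observation that a nonnegative harmonic homogeneous polynomial of positive degree must vanish gives the contradiction. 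Case~(ii) is handled the same way.
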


\begin{proof}
Without loss of generality we assume that $e=e_n$.
The proof proceeds by induction 
on $m\in\N$, with starting step provided by Proposition~\ref{l:2hom polyn}.

The cases (i) and (ii) can be treated by the same argument. 
We consider the horizontal partial derivatives 
$\partial_{x_j}u$ for 
$j\in\{1,\ldots,n-1\}$.
By the regularity estimate in \cite{FaKeSe82}
we have that
$\de_{x_j}u\in H^1_\loc(\R^{n+1},|x_{n+1}|^a\,\cL^{n+1}) \cap 
L^\infty_\loc(\R^{n+1})$
are solutions to \eqref{e:PDEz-0}
with $\{x_n=x_{n+1}\} \subseteq \Lambda(\de_i u)$
and homogeneity $\lambda-1= m-1$ or $\lambda=m-1+s$,
according to the two cases.
Using the inductive hypothesis $\de_i u = \sum_{k=0}^{m-1} p_{i,k} 
\Phi_{m-1-k}$
for $\lambda = m$
or $\de_i u = \sum_{k=0}^{m-1} p_{i,k}^+ \Psi_{m-1-k}(\cdot,\cdot)
+\sum_{k=0}^{m-1} p_{i,k}^- \Psi_{m-1-k}(-\cdot,\cdot)$ for $\lambda 
= m+s$,
for some harmonic $k$-homogeneous polynomials $p_{i,k}$ and 
$p_{i,k}^\pm$.
Therefore, we infer that
\begin{align*}
u(x) &= u(0,x_n,x_{n+1}) + \int_0^1 \sum_{i=1}^{n-1} 
\de_{x_i}u(tx'',x_{n},x_{n+1})\, x_i\,\d t\\
&=
u(0,x_n,x_{n+1}) + \int_0^1 \sum_{i=1}^{n-1} \sum_{k=0}^{m-1}
t^k p_{i,k}(x'') \Phi_{m-1-k}(x_{n},x_{n+1})\, x_i\,\d t\\
& = u(0,x_n,x_{n+1}) + \sum_{k=0}^{m-1} p_k(x'') \Phi_{m-1-k}(x_{n},x_{n+1}),
\end{align*}
and similarly
\[
u(x) = u(0,x_n,x_{n+1}) + \sum_{k=0}^{m-1} p_k^{+}(x'') 
\Psi_{m-1-k}(x_{n},x_{n+1})
+ \sum_{k=0}^{m-1} p_k^{-}(x'') \Psi_{m-1-k}(-x_{n},x_{n+1}),
\]
with $k\,p^{(\pm)}_k(x'') :=\sum_{i=1}^{n-1}p^{(\pm)}_{i,k}(x'') 
x_i$.
Using the equation \eqref{e:PDEz} (in particular, recall that
$\Phi_l, \Psi_l$ are solutions of \eqref{e:PDEz}),
we deduce that the polynomials $p_k, p_k^{\pm}$ are harmonic and 
$u(0,x_n,x_{n+1})$ is itself a solution ({\ie}~ $u(0,x_n,x_{n+1})
= c\, \Phi_m(x_n,x_{n+1})$ or 
$u(0,x_n,x_{n+1})
= c_1\, \Psi_m(x_n,x_{n+1}) + c_2\, \Psi_m(-x_n,x_{n+1})$
for some $c,c_1,c_2\in \R$),
thus concluding the proof for the cases (i) and (ii).
 
\medskip 

In case (iii) with $\lambda=m+2s$,
we consider instead all the horizontal
derivatives of $u$ and use the inductive hypothesis
\eqref{e:polinomio banana} in the form
\[
\de_iu (x) = |x_{n+1}|^{2s}\sum_{k=0}^{\lfloor \frac{(m-1)}2\rfloor}
x_{n+1}^{2k} q_{i,m-1-2k}(x') \quad\forall\;i\in\{1, \ldots, n\},
\]
where $q_{m-2k}^i$ are $(m-1-2k)$-homogeneous polynomials.
Therefore, 
\begin{align*}
u(x) &= u(0,x_{n+1}) + |x_{n+1}|^{2s}\int_0^1 
\sum_{i=1}^{n} \sum_{k=0}^{\lfloor \frac{(m-1)}2\rfloor}
x_{n+1}^{2k} q_{i,m-1-2k}(tx') \, x_i\,\d t\\
& = u(0,x_{n+1}) + |x_{n+1}|^{2s} \sum_{k=0}^{\lfloor \frac{(m-1)}2\rfloor}
x_{n+1}^{2k} q_{m-2k}(x),
\end{align*}
with
$(m-2k)q_{m-2k}(x) = \sum_{i=1}^{n} q_{i,m-1-2k}(x') x_i$.
Taking into account the homogeneity of $u$, we infer that
$u(0,x_{n+1}) = c |x_{n+1}|^{m+2s}$ and
the exact form for the polynomials $q_k$ in
\eqref{e:polinomio banana} follows by 
using the equation \eqref{e:PDEz}.

\medskip

Finally, we discuss the case of solution to the obstacle 
problem \eqref{e:ob-pb local}.
In case (i), the unilateral condition $u\geq 0$ on $B_1^\prime$
implies that 
\[
u(x^{\prime \prime},x_n,0)= 
\sum_{k=1}^{m}p_k(x^{\prime 
\prime})\,x_n^{m-k}\,\Phi_{m-k}(1,0)+p_0\,\Phi_m(x_n,0)\geq 0
\quad\forall\; x''\in\R^{n-1}\times\{0\}.
\]
This implies that the polynomials $p_k$ with $k\in\{1,\ldots,m\}$ 
are all zero.
Let, indeed, $j:=\min\{k\in\{1,\ldots,m\}:\,p_k\not\equiv 0\}$
and divide $u$ by $x_n^{j}>0$: by taking the limit 
as $x_n\downarrow 0$ we infer that $p_{j}$ is a constant sign homogeneous 
harmonic polynomial, which holds only if $p_{j}\equiv 0$, thus giving 
a
contradiction. Therefore, we conclude 
$u(x) = p_0\Phi_m(x_{n},x_{n+1})$ with $p_0>0$
for solutions to the obstacle problem.

For the case (ii), by the same argument we deduce that all polynomials
$p_k^{\pm}\equiv 0$ for $k\in\{1,\ldots,m\}$, and therefore
$u(x) = p^+_0\Psi_m(x_{n},x_{n+1}) + p^-_0\Psi_m(-x_{n},x_{n+1})$
with $p_0^\pm\geq 0$.
Since $u$ is a function of two variables, the conclusion 
follows now from Lemma~\ref{l:classification}.
\end{proof}

%
%

\bibliographystyle{plain}

\begin{thebibliography}{99}
\setlength{\itemsep}{3pt}
\newcommand{\aut}[1]{\textsc{#1}}

\bibitem{Alm00}
 \aut{Almgren, Frederick J., Jr.}
 \newblock \textit{Almgren's big regularity paper.}
 \newblock World Scientific Monograph Series in Mathematics , 1.
 World Scientific Publishing Co., Inc., River Edge, NJ, 2000.

\bibitem{AmFuPa}
\aut{L.~Ambrosio, N.~Fusco, D.~Pallara.}
\newblock  Functions of bounded variation and free discontinuity problems. 
\newblock Oxford Mathematical Monographs. The Clarendon Press, Oxford University 
Press, 
New York, 2000. xviii+434 pp.

\bibitem{And16}
\aut{J.~Andersson.}
\newblock Optimal regularity for the Signorini problem and its 
free boundary.
\newblock \textit{Invent. Math.} 204 (2016), no. 1, 1--82.

\bibitem{AtCa04}
\aut{I.~Athanasopoulos,  L.~A.~Caffarelli.} 
\newblock{Optimal regularity of lower dimensional obstacle problems.} 
\newblock  \textit{Zap. Nauchn. Sem. S.-Peterburg. Otdel. Mat. Inst. Steklov. 
(POMI)} 310 (2004), 49--66, 
226; translation in \textit{J. Math. Sci. (N. Y.)}, 132 (2006), no. 3, 274--284.

\bibitem{AtCaSa08}
\aut{I.~Athanasopoulos, L.~A.~Caffarelli, S.~Salsa.}
\newblock The structure of the free boundary for lower
dimensional obstacle problems.
\newblock \textit{Amer. J. Math.} 130 (2008), no. 2, 485--498.

\bibitem{AzTo15}
\aut{J.~Azzam, X.~Tolsa.}
\newblock Characterization of $n$-rectifiability in terms of Jones' square 
function: Part II. 
\newblock \textit{Geom. Funct. Anal.} 25 (2015), no. 5, 1371--1412.

\bibitem{BaFiRo16}
\aut{B.~Barrios, A.~Figalli, X.~Ros-Oton.}
\newblock Global regularity for the free boundary 
in the obstacle problem for the fractional Laplacian.
\newblock ArXiv:1506.04684

\bibitem{Caffa77}
\aut{L. A.~Caffarelli.} 
\newblock The regularity of free boundaries in higher dimensions. 
\newblock \textit{Acta Math.} 139 (1977), no. 3-4, 155--184.

\bibitem{Caffa79}
\aut{L. A.~Caffarelli.} 
\newblock Further regularity for the Signorini problem.
\newblock \textit{Comm. Partial Differential Equations} 4 (1979), 1067--1075. 


\bibitem{Caffa98}
\aut{L. A.~Caffarelli.} 
\newblock The obstacle problem revisited. 
\newblock \textit{J. Fourier Anal. Appl.} 4 (1998), no. 4-5, 383--402.



\bibitem{CaSi07}
\aut{L.~A.~Caffarelli, L.~Silvestre.}
\newblock An extension problem related to the fractional Laplacian.
\newblock \textit{Comm. Partial Differential Equations} 32 (2007),
no. 7-9, 1245--1260.


\bibitem{CaSaSi08}
\aut{L.~A.~Caffarelli, S.~Salsa, L.~Silvestre.}
\newblock Regularity estimates for the solution and
the free boundary of the obstacle problem for the fractional
Laplacian.
\newblock \textit{Invent. Math.} 171 (2008), no. 2, 425--461.


\bibitem{DaTo12}
\aut{G.~David, T.~Toro.}
\newblock Reifenberg parameterizations for sets with holes.
\newblock \textit{Mem. Amer. Math. Soc.} 215 (2012), no. 1012, vi+102 pp.

\bibitem{DaSa15}
\aut{D.~De Silva, O.~Savin.}
\newblock $C^\infty$ regularity of certain thin free boundaries. 
\newblock \textit{Indiana Univ. Math. J.} 64 (2015), no. 5, 1575--1608.


\bibitem{DS2}
\aut{C.~De Lellis, E.~Spadaro.}
\newblock Regularity of area minimizing currents III: blow-up.
\newblock \textit{Ann. of Math.} (2) 183 (2016), no. 2, 577--617.

\bibitem{DMSV17}
\aut{C.~De Lellis, A.~Marchese,  E.~Spadaro, D.~Valtorta.}
\newblock Rectifiability and upper Minkowski bounds for singularities of 
harmonic Q-valued maps.
\newblock ArXiv:1612.01813.


\bibitem{FaKeSe82}
\aut{E.~Fabes, C.~Kenig, R.~Serapioni.}
\newblock The local regularity of solutions of degenerate elliptic equations.
\newblock \textit{Comm. Partial Differential Equations} 7 (1982), no. 1, 
77--116. 


\bibitem{FoGeSp15}
\aut{M.~Focardi, M.~S.~Gelli, E.~Spadaro.}
\newblock Monotonicity formulas for obstacle
problems with Lipschitz coefficients.
\newblock \textit{Calc. Var. Partial Differential Equations}
54 (2015), no. 2, 1547--1573.


\bibitem{FMS-15}
\aut{M.~Focardi, A.~Marchese, E.~Spadaro.}
\newblock Improved estimate of the singular set of Dir-minimizing Q-valued 
functions via an abstract regularity result.
\newblock \textit {J. Funct. Anal.} 268 (2015), no. 11, 
3290--3325.


\bibitem{FoSp16}
\aut{M.~Focardi, E.~Spadaro.}
\newblock An epiperimetric inequality for the fractional obstacle problem.
\newblock \textit{Adv. Differential Equations} 21 (2016), no. 1-2, 153--200.

\bibitem{Freh75}
\aut{J.~Frehse.}
\newblock Two-dimensional variational problems with thin obstacles.
\newblock \textit{Math. Z.} 143 (1975), 279--288.

\bibitem{Freh77}
\aut{J.~Frehse.}
\newblock On Signorini's problem and variational problems with thin 
obstacles.
\newblock \textit{Ann. Scuola Norm. Sup. Pisa} 4 (1977), 343--362.


\bibitem{GaPe09}
\aut{N.~Garofalo, A.~Petrosyan.} 
\newblock{Some new monotonicity formulas and the singular set
in the lower dimensional obstacle problem.} 
\newblock \textit{Invent. Math.}, 177  (2009),  no. 2, 415--461.

\bibitem{GaPePoSm}
\aut{N.~Garofalo, A.~Petrosyan, C.~Pop, M.~Smit Vega Garcia} 
\newblock Regularity of the free boundary for the obstacle problem for the 
fractional Laplacian with drift.
\newblock ArXiv:1509.06228.

\bibitem{Geraci}
\aut{F.~Geraci.}
\newblock The classical obstacle problem for nonlinear variational energies
and related problems.
\newblock PhD thesis, University of Firenze (2016).

\bibitem{Hopf32}
\aut{E.~Hopf.}
\newblock \"Uber den funktionalen, insbesondere den analytischen
Charakter der L\"osungen elliptischer Differentialgleichungen zweiter Ordnung.
(German) Math. Z. 34 (1932), no. 1, 194--233. 

\bibitem{Jones90}
\aut{P.~Jones.}
\newblock Rectifiable sets and the traveling salesman problem.
\newblock \textit{Invent. Math.} 102 (1990), no. 1, 1--15.

\bibitem{Kind81}
\aut{D.~Kinderlehrer.}
\newblock The smoothness of the solution of the boundary 
obstacle problem.
\newblock \textit{J. Math. Pures Appl.} 60 (1981), 193--212.

\bibitem{KPS15}
\aut{H.~Koch, A.~Petrosyan, W.~Shi.}
\newblock Higher regularity of the 
free boundary in the elliptic Signorini problem. 
\newblock \textit{Nonlinear Anal.} 126 (2015), 3--44.


\bibitem{Monneau03}
\aut{R.~Monneau.}
\newblock On the number of singularities for the obstacle problem in two 
dimensions.  
\newblock \textit{J. Geom. Anal.}  13  (2003),  no. 2, 359--389.

\bibitem{NaVa1}
\aut{A.~Naber, D.~Valtorta.}
\newblock Rectifiable-Reifenberg and the regularity of stationary and 
minimizing 
harmonic maps.
\newblock \textit{Ann. of Math. (2)} 185 (2017), no. 1, 131--227.

\bibitem{NaVa2}
\aut{A.~Naber, D.~Valtorta.}
\newblock The Singular Structure and Regularity of Stationary and Minimizing 
Varifolds.
\newblock \textit{preprint} arXiv:1505.03428


\bibitem{DLMF}
NIST Digital Library of Mathematical Functions. 
http://dlmf.nist.gov/, Release 1.0.14 of 2016-12-21. F. W. J. Olver, A. B. Olde 
Daalhuis, D. W. Lozier, B. I. Schneider, R. F. Boisvert, C. W. Clark, B. R. 
Miller, and B. V. Saunders, eds.


\bibitem{Ro-Se}
\aut{X.~Ros-Oton, J.~Serra}
\newblock Boundary regularity for fully nonlinear 
integro-differential equations.
\newblock \textit{Duke Math. J.} 165 (2016), no. 11, 2079--2154.


\bibitem{Si07}
\aut{L.~Silvestre.}
\newblock  Regularity of the obstacle problem for a fractional power of the 
Laplace operator. 
\newblock \textit{Comm. Pure Appl. Math.} 60 (2007), no. 1, 67--112.


\bibitem{Ural85}
\aut{N.~N.~Ural′tseva}
\newblock H\"older continuity of gradients of solutions of parabolic 
equations with boundary conditions of Signorini type.
\newblock \textit{Dokl. Akad. Nauk SSSR} 280 (1985), 563--565.


\bibitem{Ural87}
\aut{N.~N.~Ural′tseva}
\newblock On the regularity of solutions of variational 
inequalities. (Russian) 
\newblock \textit{Uspekhi Mat. Nauk} 42 (1987), no. 6(258), 151--174, 248.


\bibitem{Weiss} 
\aut{G.~S.~Weiss.} 
\newblock A homogeneity improvement approach to the obstacle problem. 
\newblock \textit{Invent. Math.}, 138  (1999),  no. 1, 23--50.
\end{thebibliography}

\end{document}